\DeclareSymbolFontAlphabet{\mathbb}{AMSb}
\DeclareSymbolFontAlphabet{\mathbbl}{bbold}
\renewcommand{\epsilon}{\varepsilon}
\renewcommand{\phi}{\varphi}
\setlist[enumerate]{label*=(\roman*)}
\definecolor{darkblue}{rgb}{0.2,0,0.6}
\definecolor{darkgreen}{rgb}{0.2,0.5,0.2}
\crefname{equation}{}{}
\theoremstyle{definition}
\newtheorem{theorem}{Theorem}[section]
\crefname{theorem}{Theorem}{Theorems}
\newtheorem{definition}[theorem]{Definition}
\crefname{definition}{Definition}{Definitions}
\newtheorem{lemma}[theorem]{Lemma}
\crefname{lemma}{Lemma}{Lemmas}
\newtheorem{corollary}[theorem]{Corollary}
\crefname{corollary}{Corollary}{Corollaries}
\newtheorem{proposition}[theorem]{Proposition}
\crefname{proposition}{Proposition}{Propositions}
\newtheorem{remark}[theorem]{Remark}
\crefname{remark}{Remark}{Remarks}
\newtheorem{example}[theorem]{Example}
\crefname{example}{Example}{Examples}
\newtheorem{notation}[theorem]{Notation}
\crefname{notation}{Notation}{Notations}
\renewcommand{\-}{\mathchar`-}%数式環境のハイフン
\newcommand{\A}{\mathscr{A}}
\newcommand{\B}{\mathscr{B}}
\newcommand{\C}{\mathscr{C}}
\newcommand{\D}{\mathscr{D}}
\newcommand{\E}{\mathscr{E}}
\newcommand{\bA}{\mathbb{A}}
\newcommand{\bB}{\mathbb{B}}
\newcommand{\bC}{\mathbb{C}}
\newcommand{\bD}{\mathbb{D}}
\newcommand{\bE}{\mathbb{E}}
\newcommand{\bF}{\mathbb{F}}
\newcommand{\bI}{\mathbb{I}}
\newcommand{\bJ}{\mathbb{J}}
\newcommand{\bM}{\mathbb{M}}
\newcommand{\bN}{\mathbb{N}}
\newcommand{\bS}{\mathbb{S}}
\newcommand{\bT}{\mathbb{T}}
\newcommand{\bX}{\mathbb{X}}
\newcommand{\bZ}{\mathbb{Z}}
\newcommand{\Set}{\mathbf{Set}}
\newcommand{\Cat}{\mathbf{Cat}}
\newcommand{\CAT}{\mathbf{CAT}}
\newcommand{\Mon}{\mathbf{Mon}}
\newcommand{\Grp}{\mathbf{Grp}}
\newcommand{\Pos}{\mathbf{Pos}}
\newcommand{\1}{\mathbbl{1}}
\newcommand{\3}{\mathbbl{3}}
\newcommand{\op}{\mathrm{op}}
\newcommand{\ob}{\mathrm{ob}}
\newcommand{\mor}{\mathrm{mor}}
\newcommand{\dom}{\mathop{\mathrm{dom}}}
\newcommand{\cod}{\mathop{\mathrm{cod}}}
\newcommand{\arr}[1]{\overset{#1}{\rightarrow}}
\newcommand{\longarr}[1]{\mathrel{
\tikz\draw[->] (0,0) -- node[above=1.8pt,inner sep=0pt] {\small$#1$} (1,0);
}}
\newcommand{\pto}{\rightharpoonup} %partial map
\newcommand{\const}[1]{\ulcorner #1\urcorner}
\newcommand{\Id}{\mathrm{Id}}
\newcommand{\id}{\mathrm{id}}
\newcommand{\Colim}[1]{\mathop{\underset{#1}{\mathrm{Colim}}}}
\newcommand{\incat}[1]{~~\text{in }{#1}}
\newcommand{\fp}[1]{{#1}_\mathrm{fp}}
\newcommand{\orth}[1]{{#1}^\bot} %orthogonality class
\newcommand{\rorth}{\mathbf{r}}
\newcommand{\lorth}{\mathbf{l}}
\newcommand{\tup}[1]{\vec{#1}}
\newcommand{\intpn}[2]{\mathord{\left\llbracket #1 \right\rrbracket_{#2}}} %structureにおける記号の解釈
\newcommand{\defined}{\mathord{\downarrow}}
\newcommand{\seq}[1]{\mathrel{
\tikz\draw[|-] (0,0) -- node[above=1.8pt,inner sep=0pt] {\small$#1$} (1,0);
}}
\newcommand{\biseq}[1]{\mathrel{
\tikz\draw[|-|] (0,0) -- node[above=1.8pt,inner sep=0pt] {\small$#1$} (1,0);
}}
\newcommand{\PStr}{\mathrm{PStr}}
\newcommand{\PMod}{\mathrm{PMod}}
\newcommand{\Term}{\mathrm{Term}}
\newcommand{\repn}[1]{\left\langle #1 \right\rangle} %Horn-fml-in-contextに対応するf.p.obj
\newcommand{\mon}{\mathrm{mon}}
\newcommand{\pos}{\mathrm{pos}}
\newcommand{\quiv}{\mathrm{quiv}}
\newcommand{\cat}{\mathrm{cat}}
\newcommand{\rsrel}{\mathrm{rsrel}}
\newcommand{\Mod}{\mathrm{Mod}}
\newcommand{\ar}{\mathrm{ar}}
\newcommand{\type}{\mathrm{type}}
\newcommand{\Alg}{\mathop{\mathit{Alg}}}
\newcommand{\pht}[2]{{\bT_{#1,#2}}} %the partial Horn theory for rat
\newcommand{\mnd}[2]{{T_{#1,#2}}} %the monad for rat
\newcommand{\Th}{\mathbf{Th}}
\newcommand{\Mndf}{\mathbf{Mnd_f}}
\newcommand{\EM}{\mathrm{EM}}
\title{Birkhoff's variety theorem for relative algebraic theories}
\author{Yuto Kawase}
\address{Research Institute for Mathematical Sciences, Kyoto University, Kyoto 606-8502, Japan}
\email{ykawase@kurims.kyoto-u.ac.jp}
\date{\today}
\keywords{finitary monad, locally finitely presentable category, partial Horn theory, Birkhoff's variety theorem, HSP theorem, universal algebra, equational theory}
\thanks{The author is grateful to his supervisor Masahito Hasegawa for his support, and to Soichiro Fujii and Hisashi Aratake for their useful suggestions.}
\subjclass{18C10,18C15,18C35,18E45}
\begin{document}

\begin{abstract}
    An algebraic theory, sometimes called an equational theory, is a theory defined by finitary operations and equations, such as the theories of groups and of rings.
    It is well known that algebraic theories are equivalent to finitary monads on $\mathbf{Set}$.
    In this paper, we generalize this phenomenon to locally finitely presentable categories using partial Horn logic.
    For each locally finitely presentable category $\mathscr{A}$, we define an ``algebraic concept'' relative to $\mathscr{A}$, which will be called an $\mathscr{A}$-relative algebraic theory, and show that $\mathscr{A}$-relative algebraic theories are equivalent to finitary monads on $\mathscr{A}$.
    In establishing such equivalence, a generalized Birkhoff's variety theorem plays an important role.
\end{abstract}

\maketitle

\tableofcontents
%-----------------------------------------------------------------------------
\section{Introduction}
%-----------------------------------------------------------------------------

The study initiated by Birkhoff in \cite{birkhoff1935structure} is called \emph{universal algebra}: a unified investigation of algebraic theories such as the theories of groups, rings, and Lie algebras.
One category-theoretic approach to universal algebra was taken by Linton, who implicitly showed in his paper \cite{linton1969outline} that algebraic theories correspond to finitary monads on $\Set$, and this correspondence preserves the concept of models.
Such a correspondence between algebraic theories and monads can be immediately generalized to the multi-sorted case.
In fact, it is well-known that for a given set $S$, there is a correspondence between $S$-sorted algebraic theories and finitary monads on $\Set^S$.

It is natural to speculate that finitary monads over general categories would extend such classical algebraic theories and give ``algebraic theories'' over those categories rather than $\Set$.
As far as finitary monads on locally finitely presentable categories are concerned, Kelly and Power gave a nice presentation for such monads in \cite{kelly1993adjunctions}, which makes our speculation more plausible.
In fact, ``algebraic theories'' over locally finitely presentable categories and their correspondence with finitary monads are discussed by Rosick\'{y} \cite[Section 3]{rosicky2021metric}.
However, Rosick\'{y}'s approach is not so syntactic in the sense that neither logical terms nor logical formulas appear explicitly.
On the other hand, there are more syntactic approaches.
One of them is given by Ford, Milius, and Schr\"{o}der in \cite{ford2021monads}.
By giving ``algebraic theories'' over a category of models of a relational Horn theory (which is a special case of locally presentable categories) in a syntactic way, they describe finitary monads over that category.
Another one is given by Ad\'{a}mek et al. in \cite{adamek2021finitary}.
By giving so-called ordered algebraic theories in a syntactic way, they describe finitary monads on the category $\Pos$ of posets.

Our aim is by giving ``algebraic theories'' over locally finitely presentable categories in a syntactic way as in \cite{ford2021monads,adamek2021finitary}, to describe finitary monads on locally finitely presentable categories and to give a framework for universal algebra over locally finitely presentable categories rather than $\Set$.
In fact, we show that for a given locally finitely presentable category $\A$, finitary monads on $\A$ correspond to ``algebraic theories over $\A$'' and this correspondence preserves the concept of models.
This is one of our main results and will be shown in \cref{section5}.
We call the concept of ``algebraic theories over $\A$,'' which is equivalent to finitary monads on $\A$, \emph{$\A$-relative algebraic theories}.
Note that our results are in the $\Set$-enriched sense, whereas the results of Ford, Milius, Schr\"{o}der, and Ad\'{a}mek were in the enriched setting.

In a classical algebraic theory, each operator $\omega$ has a natural number $n$ called an \emph{arity}, where $\omega$ is an operator that takes $n$ elements $x_1,\dots,x_n$ as input and outputs one element $\omega(x_1,\dots,x_n)$.
In contrast, in our $\A$-relative algebraic theory, each operator $\omega$ has as its arity a logical formula $\phi$ written in a language intrinsic to $\A$, in addition to the natural number $n$.
Namely, $\omega$ is a partial operator whose input is a tuple of $n$ elements $x_1,\dots,x_n$ satisfying the condition $\phi(x_1,\dots,x_n)$.
For example, in the case of $\A:=\Pos$, the partially ordered set of natural numbers $\bN$ has the partial operation $-$ of subtraction, where $x-y$ is defined only when $y\le x$ holds.
In this case, $-$ can be thought of as having as its arity the logical formula $y\le x$ written in the language of $\Pos$.
However, such an explanation of $\A$-relative algebraic theories is somewhat less rigorous, so we need to use some logical theory to describe languages intrinsic to locally finitely presentable categories.
The following logical theories are all known to characterize locally finitely presentable categories:
\begin{itemize}
    \item Cartesian theories (see \cite[{}D1.3.4]{johnstone2002sketches}),
    \item Essentially algebraic theories (see \cite[3.34 Definition]{adamek1994locally}),
    \item Partial Horn theories (see \cite{palmgren2007partial}).
\end{itemize}
Perhaps any of these theories would be sufficient to define $\A$-relative algebraic theory rigorously.
However, we use partial Horn theories in this paper because partial operations appear centrally in $\A$-relative algebraic theory and because we want to treat relation symbols explicitly, as in $\Pos$.

In \cref{section2}, we recall the relationship between locally finitely presentable categories and partial Horn theories and establish the correspondence between finitely presentable objects and Horn formulas.
We will use this correspondence throughout this paper.
The construction here to obtain finitely presentable objects from Horn formulas is a generalization of the construction of the term model in \cite{palmgren2007partial}.
The corresponding result for Cartesian theories has already appeared in \cite{caramello2018theories}.
While \cref{prop:validity_for_PHL} shows that a category of models of a partial Horn theory is an orthogonality class (or an injectivity class), a special version of this assertion restricted to categories of structures has already appeared in \cite{bidlingmaier2018categories}.

In \cref{section3}, for a locally finitely presentable category $\A$, we define $\A$-relative algebraic theories using partial Horn theories and show one direction of the correspondence between finitary monads and relative algebraic theories, which is one of our main results (\cref{thm:equiv_between_monad_and_rat}).
The other direction is showed in \cref{section5}.

One of the famous results in classical $\Set$-based algebraic theories is the following Birkhoff's variety theorem in \cite{birkhoff1935structure} (also called HSP theorem):
\begin{theorem}[\cite{birkhoff1935structure}]\label{thm:birkhoff_classical}
    Let $(\Omega,E)$ be a single-sorted algebraic theory (in our language, $\Set$-relative algebraic theory).
    Then the following are equivalent for a full subcategory $\E\subseteq\Alg(\Omega,E)$.
    \begin{enumerate}
        \item $\E$ is an equational class, i.e., there is a set $E'$ of equations such that $\E=\Alg(\Omega,E+E')$.
        \item $\E\subseteq\Alg(\Omega,E)$ is closed under:
        \begin{itemize}
            \item products,
            \item subobjects,
            \item quotients, i.e., if $p:A\to B$ is a surjective morphism in $\Alg(\Omega,E)$ and $A$ belongs to $\E$, then $B$ also belongs to $\E$.
        \end{itemize}
    \end{enumerate}
\end{theorem}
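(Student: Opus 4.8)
The plan is to prove the two implications separately, with $(1)\Rightarrow(2)$ being routine verification and $(2)\Rightarrow(1)$ carrying the real content through a \emph{relatively free algebra} construction. For $(1)\Rightarrow(2)$, suppose $\E=\Alg(\Omega,E+E')$ for some set $E'$ of equations. I would check directly that each equation $s=t$ is preserved by the three operations. For a product $\prod_i A_i$, every term is interpreted componentwise, so $s=t$ holds in the product iff it holds in each factor. For a subalgebra $A'\hookrightarrow A$, term interpretation commutes with the inclusion, so validity in $A$ restricts to validity in $A'$. For a surjection $p\colon A\twoheadrightarrow B$ with $A\in\E$, every tuple in $B$ lifts along $p$ and $p$ commutes with term interpretation, so $s=t$ descends to $B$. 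Hence $\E$ is closed under products, subobjects, and quotients.

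For $(2)\Rightarrow(1)$, let $E'$ be the set of \emph{all} equations $s=t$ (each in finitely many variables) valid in every algebra of $\E$. By construction $\E\subseteq\Alg(\Omega,E+E')$, so only the reverse inclusion is at issue. The central step is to construct, for each set $X$, a relatively free algebra $F_\E(X)$ and to show $F_\E(X)\in\E$. I would take the free algebra $T(X)$ on $X$ in $\Alg(\Omega,E)$ and quotient by the congruence $\theta_\E$ consisting of those pairs of terms identified in every $B\in\E$ under every assignment $X\to|B|$. The family of all homomorphisms $T(X)\to B$, indexed by $B\in\E$ and assignments $X\to|B|$, assembles into a single homomorphism $T(X)\to\prod B$ into a product of members of $\E$, and the kernel of this map is exactly $\theta_\E$. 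Thus $F_\E(X)$ is isomorphic to a subalgebra of a product of $\E$-algebras, so $F_\E(X)\in SP(\E)\subseteq\E$ by the closure hypothesis.

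With the relatively free algebras available, I would finish as follows. Take any $A\in\Alg(\Omega,E+E')$; I must show $A\in\E$. Choosing $X:=|A|$ as the generating set, the identity map yields a canonical surjection $T(X)\twoheadrightarrow A$. Since $A$ satisfies every equation of $E'$, i.e.\ every equation valid throughout $\E$, the congruence $\theta_\E$ is contained in the kernel of $T(X)\twoheadrightarrow A$; hence this surjection factors as $T(X)\twoheadrightarrow F_\E(X)\twoheadrightarrow A$, exhibiting $A$ as a quotient of $F_\E(X)\in\E$. Closure under quotients then gives $A\in\E$, which completes the argument.

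The main obstacle I expect is the verification that the kernel of $T(X)\to\prod B$ is \emph{precisely} $\theta_\E$, and, relatedly, that $\theta_\E$ is genuinely the congruence cut out by the equations in $E'$ — this is where the interplay between syntactic equations and semantic identification of terms must be pinned down with care. The bookkeeping over variables (ensuring $X$ is taken large enough and that equations in finitely many variables already detect membership) is delicate but routine; the conceptual crux is the identification of ``validity of $s=t$ throughout $\E$'' with ``$s$ and $t$ agree in the product $\prod B$.''
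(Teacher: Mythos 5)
Your argument is the classical elementary proof of the HSP theorem and it is essentially correct, but it is not the route the paper takes: the paper only cites Birkhoff for this statement and never proves it directly, deriving it instead as a special case of the much more general \cref{thm:birkhoff_partialHorn} and \cref{thm:birkhoff_for_rat}, whose proof runs through orthogonality classes, the finitely presentable proper factorization system of $\bT$-dense maps and $\bT$-closed monomorphisms, and epireflective subcategories (\cref{lem:epi-reflective,cor:E-reflective}). In the single-sorted $\Set$ case the general hypotheses collapse to yours ($\Sigma$-closed subobjects are all subalgebras, $U$-retracts are surjections), except that the general theorem also demands closure under filtered colimits --- a condition that genuinely cannot be dropped already over $\Set^S$ but is automatic from HSP-closure in the single-sorted case. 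Your relatively free algebra $F_\E(X)=T(X)/\theta_\E$ buys a short, self-contained argument, at the price of leaning on features special to single-sorted $\Set$-algebras (surjections split, congruences are kernels, free algebras are term algebras), whereas the paper's machinery buys uniformity over arbitrary locally finitely presentable bases. The one point you must repair is the product $\prod B$: as written it is indexed by all pairs $(B,v)$ with $B\in\E$, which is a proper class whenever $\E$ is large, so the product need not exist. The standard fix is to choose, for each pair $(s,t)\notin\theta_\E$ of elements of $T(X)$, a single witness $(B_{s,t},v_{s,t})$ with $\bar v_{s,t}(s)\neq\bar v_{s,t}(t)$ and form the set-indexed product $\prod_{(s,t)}B_{s,t}$; its kernel is still exactly $\theta_\E$. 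With that emendation, together with your (correct) observation that each pair in $\theta_\E$ involves only finitely many variables and hence corresponds to an equation already in $E'$, the proof is complete.
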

In \cref{section4}, we prove a generalization of the above theorem to $\A$-relative algebraic theories (\cref{thm:birkhoff_for_rat}).
This generalization is another main result, which subsumes not only \cref{thm:birkhoff_classical} but also Birkhoff's theorem for quasi-varieties (see \cite[3.22 Theorem]{adamek1994locally}).
In this generalization, we replace subobjects with ``$\Sigma$-closed subobjects,'' quotients with ``$U$-retracts,'' and add the closedness condition under filtered colimits.
The closedness condition under filtered colimits cannot be removed even in the case of $\A:=\Set^S$, which is discussed in detail in \cite[10.23 Example]{adamek2010algebraic}.
The classical Birkhoff's theorem (\cref{thm:birkhoff_classical}) does not depend on syntax because surjections can be characterized by regular epimorphisms, and hence all the conditions of \cref{thm:birkhoff_classical} are purely category-theoretic.
In contrast, our theorem depends on syntax because the concept of ``$\Sigma$-closed subobjects'' depends on the choice of syntax for $\A$.
This is a notable feature of our generalization of Birkhoff's theorem.

In \cref{section5}, by using our generalized Birkhoff's theorem, we show the other direction of the correspondence between finitary monads and relative algebraic theories and complete the proof of such correspondence (\cref{thm:equiv_between_monad_and_rat}).
Moreover, we observe the relationship between the condition for monads of preserving sifted colimits and the totality of operators.
This observation is closely related to the study in \cite{rosicky2012strongly}.

%-----------------------------------------------------------------------------
\section{Preliminaries}\label{section2}
%-----------------------------------------------------------------------------
%-----------------------------------------------------------------------------
\subsection{Locally finitely presentable categories and partial Horn theories}
%-----------------------------------------------------------------------------

We recall the definition of a locally presentable category.

\begin{definition} Let $\lambda$ be an infinite regular cardinal.
    \begin{enumerate}
        \item
        A category $\C$ is \emph{$\lambda$-filtered} if every diagram $\bJ\to\C$ has a cocone where $\bJ$ has $<\lambda$ arrows.
        \item
        A \emph{$\lambda$-filtered colimit} is a colimit indexed by a small $\lambda$-filtered category.
        \item
        An object $A$ of a category $\A$ is \emph{$\lambda$-presentable} if its hom-functor
        \[
        \A(A,\bullet):\A\longrightarrow\Set
        \]
        preserves $\lambda$-filtered colimits.
        An $\aleph_0$-presentable object is also called \emph{finitely presentable}.
        \item
        A category $\A$ is \emph{locally $\lambda$-presentable} if it is locally small and cocomplete, and there exists a set $\mathcal{G}$ of $\lambda$-presentable objects such that every object is a $\lambda$-filtered colimit of objects from $\mathcal{G}$.
        A locally $\aleph_0$-presentable category is also called \emph{locally finitely presentable}.
        \item
        A category $\A$ is \emph{locally presentable} if it is locally $\kappa$-presentable for some $\kappa$.
    \end{enumerate}
\end{definition}

\begin{notation}
    Let us denote by $\fp{\A}$ the full subcategory of a category $\A$ consisting of all finitely presentable objects in $\A$.
\end{notation}

\begin{definition}
Let $\C$ be a locally small category.
    \begin{enumerate}
        \item
        A class $\mathcal{G}\subseteq\C$ of objects in $\C$ is a \emph{generator} for $\C$ if the functor
        \begin{equation}\label{eq:generator}
            \C\ni C\mapsto \{\C(G,C)\}_{G\in\mathcal{G}}\in\Set^\mathcal{G}
        \end{equation}
        is faithful.
        \item
        A class $\mathcal{G}\subseteq\C$ is a \emph{strong generator} if the functor \cref{eq:generator} is faithful and conservative, i.e., reflects isomorphisms.
    \end{enumerate}
\end{definition}

\begin{theorem}\label{thm:basic_properties_of_lfp}
    The following are elementary properties of locally presentable categories.
    \begin{enumerate}
        \item\label{thm:basic_properties_of_lfp-1}
        Let $\mathcal{G}$ be a strong generator for a locally finitely presentable category $\A$.
        If every object in $\mathcal{G}$ is finitely presentable in $\A$, then $\fp{\A}$ is the finite colimit closure of $\mathcal{G}$ in $\A$.
        \item
        In a locally presentable category, the class of $\lambda$-presentable objects is essentially small.
        In particular, $\fp{\A}$ is essentially small for every locally presentable category $\A$.
        \item
        Every locally presentable category is complete, wellpowered, and co-wellpowered.
    \end{enumerate}
\end{theorem}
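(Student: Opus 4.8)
The three assertions are standard consequences of the representation of objects as filtered colimits, and I would prove them in order, invoking the hypotheses of \cref{thm:basic_properties_of_lfp-1} only where needed.

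For \cref{thm:basic_properties_of_lfp-1}, write $\C$ for the closure of $\mathcal{G}$ under finite colimits in $\A$. Since finite colimits commute with filtered colimits in $\Set$, finitely presentable objects are closed under finite colimits, and as $\mathcal{G}\subseteq\fp{\A}$ this gives $\C\subseteq\fp{\A}$ at once. For the converse I would show that, for every object $A$, the canonical comparison $\beta\colon\mathrm{colim}(\C\downarrow A)\to A$ is an isomorphism: the comma category $\C\downarrow A$ is filtered because $\C$ is finitely cocomplete, and for each $G\in\mathcal{G}$ the hom-functor $\A(G,-)$ preserves this filtered colimit because $G$ is finitely presentable, whence a direct bookkeeping check shows $\A(G,\beta)$ is bijective; conservativity of the strong generator $\mathcal{G}$ then forces $\beta$ to be invertible. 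Granting this, a finitely presentable $A$ makes $\id_A$ factor through some structure map $C\to A$ with $C\in\C$, so $A$ is a retract of $C$; and a retract is the coequalizer of the corresponding idempotent with $\id_C$, hence a finite colimit formed inside $\C$, so $A\in\C$. The only real work here is the bijectivity check for $\A(G,\beta)$, which amounts to unwinding the filtered-colimit description of maps out of a finitely presentable object.

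For the second assertion I would instead use the presentation furnished by the definition: every object $A$ is a $\lambda$-filtered colimit whose vertices lie in the generating set $\mathcal{G}$. If $A$ is $\lambda$-presentable then $\id_A$ factors through one such vertex $G\in\mathcal{G}$, so $A$ is a retract of $G$, i.e. the splitting of an idempotent $e\colon G\to G$. Assigning to each $\lambda$-presentable $A$ a witnessing pair $(G,e)$ yields a map from isomorphism classes of $\lambda$-presentable objects into the set $\coprod_{G\in\mathcal{G}}\A(G,G)$, which is a set because $\A$ is locally small and $\mathcal{G}$ is a set; this map is injective up to isomorphism since a splitting of a given idempotent is unique. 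Hence the $\lambda$-presentable objects are essentially small, and $\lambda=\aleph_0$ gives the statement for $\fp{\A}$.

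For the third assertion I would pass to the canonical functor $E\colon\A\to\Set^{\fp{\A}^{\op}}$ sending $A$ to $\A(-,A)$ restricted to $\fp{\A}$; this target is a genuine presheaf category by the essential smallness just established, $E$ is fully faithful because $\fp{\A}$ is dense (every object being a canonical filtered colimit of finitely presentable ones), and $E$ admits a left adjoint by the adjoint functor theorem together with the cocompleteness of $\A$, exhibiting $\A$ as a reflective subcategory of a presheaf category. Completeness and wellpoweredness then descend immediately: reflective subcategories are closed under limits, and $E$, being fully faithful and a right adjoint, preserves monomorphisms and carries distinct subobjects to distinct subobjects, so the wellpoweredness of the presheaf category transfers to $\A$. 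Co-wellpoweredness, however, does not follow from the embedding and is the genuinely delicate point; here I would invoke the accessibility-based cardinality bound on quotient objects (as in \cite{adamek1994locally}), and I expect this to be the main obstacle of the whole theorem.
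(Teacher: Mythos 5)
Your proof of part (i) is correct and follows essentially the paper's route: the paper cites the density of the finite-colimit closure of a strong generator consisting of finitely presentables (\cite[1.11 Theorem]{adamek1994locally}) together with the retract-closure description of $\fp{\A}$ (\cite[Remark 2.1.6]{makkai1989accessible}), and your argument proves both cited ingredients inline. The one point you wave at as ``bookkeeping'' --- injectivity of $\A(G,\beta)$ --- is not purely formal: identifying two maps $G\rightrightarrows C_i$ that agree after composing into $A$ requires passing to their coequalizer inside $\C$, which is exactly where closure of $\C$ under finite colimits is used. You have that closure available, so the argument goes through, but it deserves to be said. The paper disposes of (ii) and (iii) entirely by citation, so the question is whether your self-contained replacements cover the stated generality; two of them do not quite.

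For (ii), the retract-of-a-generator argument needs every object to be a $\lambda$-filtered colimit with vertices in a \emph{set} of objects. If $\A$ is locally $\kappa$-presentable, the definition hands you a $\kappa$-filtered diagram over $\mathcal{G}$; this suffices whenever $\lambda\le\kappa$ (since $\kappa$-filtered implies $\lambda$-filtered, and in particular it covers the case $\fp{\A}$ that the paper actually uses), but for $\lambda>\kappa$ a $\kappa$-filtered colimit need not be $\lambda$-filtered, so $\id_A$ need not factor through a vertex. The general statement requires first enlarging $\mathcal{G}$ to an essentially small class of $\lambda$-presentables closed under $\lambda$-small colimits and re-presenting every object as a $\lambda$-filtered colimit over it; that is the content of the Makkai--Par\'e result being cited. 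For (iii) the problem is sharper: you embed $\A$ into $\Set^{\fp{\A}^{\op}}$ and derive full faithfulness from density of $\fp{\A}$, but $\fp{\A}$ is dense only when $\A$ is locally \emph{finitely} presentable. A locally $\aleph_1$-presentable category can have no finitely presentable objects beyond the initial one (Banach spaces with linear contractions is the standard example), and there your functor $E$ collapses the category, so the completeness and wellpoweredness arguments fail as written. The repair is routine --- use the $\kappa$-presentable objects for $\kappa$ the presentability rank of $\A$ --- and your deferral of co-wellpoweredness to the accessibility-theoretic bound in \cite{adamek1994locally} matches what the paper does, but as stated your proof of (iii) establishes the claim only for locally finitely presentable categories.
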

\begin{proof}\quad
\begin{enumerate}
    \item
    Let $\bar{\mathcal{G}}$ be the finite colimit closure of $\mathcal{G}$ in $\A$.
    By \cite[1.11 Theorem]{adamek1994locally}, every object of $\A$ is a filtered colimit of objects of $\Bar{\mathcal{G}}$.
    Thus \cite[Remark 2.1.6]{makkai1989accessible} implies that $\fp{\A}$ is the retract closure of $\Bar{\mathcal{G}}$.
    Since retracts are finite colimits (absolute coequalizers), we conclude that $\fp{\A}$ is the finite colimit closure of $\mathcal{G}$.
    \item
    See \cite[Corollary 2.3.12']{makkai1989accessible}.
    \item
    See \cite[{}1.D]{adamek1994locally}.\qedhere
\end{enumerate}
\end{proof}

We now introduce partial Horn logic as presented in \cite{palmgren2007partial}.

\begin{definition}
    Let $S$ be a set.
    An \emph{$S$-sorted signature} $\Sigma$ consists of:
    \begin{itemize}
        \item a set $\Sigma_\mathrm{f}$ of function symbols,
        \item a set $\Sigma_\mathrm{r}$ of relation symbols
    \end{itemize}
    such that
    \begin{itemize}
        \item for each $f\in\Sigma_\mathrm{f}$, an arity $f:s_1\times\dots\times s_n\to s\,(n\in\bN, s_i,s\in S)$ is given;
        \item for each $R\in\Sigma_\mathrm{r}$, an arity $R:s_1\times\dots\times s_n\,(n\in\bN, s_i\in S)$ is given.
    \end{itemize}
\end{definition}

Given the set $S$ of sorts, we fix an $S$-sorted set $\mathrm{Var}=(\mathrm{Var}_s)_{s\in S}$ such that $\mathrm{Var}_s$ is countably infinite for each $s\in S$.
We assume $\mathrm{Var}_s\cap\mathrm{Var}_{s'}=\varnothing$ if $s\neq s'$.
An element $x\in\mathrm{Var}_s$ is called a \emph{variable of sort $s$}.
The notation $x{:}s$ means that $x$ is a variable of sort $s$.

\begin{definition}
    Let $\Sigma$ be an $S$-sorted signature.
    \begin{enumerate}
        \item
        \emph{Terms} (over $\Sigma$) and their types are defined by the following inductive rules:
        \begin{itemize}
            \item
            Given a variable $x$ of sort $s\in S$, $x$ is a term of type $s$;
            \item
            Given a function symbol $f\in\Sigma$ with arity $s_1\times\dots\times s_n\to s$ and terms $\tau_1,\dots,\tau_n$ where $\tau_i$ is of type $s_i$, 
            then $f(\tau_1,\dots,\tau_n)$ is a term of type $s$.
        \end{itemize}
        %----------------------------------------------------------------------------------------------------------------------------
        \item
        \emph{Horn formulas} (over $\Sigma$) are defined by the following inductive rules:
        \begin{itemize}
            \item
            Given a relation symbol $R\in\Sigma$ with arity $s_1\times\dots\times s_n$ and terms $\tau_1,\dots,\tau_n$ where $\tau_i$ is of type $s_i$, 
            then $R(\tau_1,\dots,\tau_n)$ is a Horn formula;
            \item
            Given two terms $\tau$ and $\tau'$ of the same type $s$, 
            then $\tau=\tau'$ is a Horn formula;
            \item
            The truth constant $\top$ is a Horn formula;
            \item
            Given two Horn formulas $\phi$ and $\psi$, $\phi\wedge\psi$ is a Horn formula.
        \end{itemize}
        %----------------------------------------------------------------------------------------------------------------------------
        \item
        A \emph{context} is a finite tuple $\tup{x}$ of distinct variables.
        %----------------------------------------------------------------------------------------------------------------------------
        \item
        A \emph{term-in-context} (over $\Sigma$) is a pair of a context $\tup{x}$ and term $\tau$ (over $\Sigma$), written as $\tup{x}.\tau$, where all variables appearing in $\tau$ occur in $\tup{x}$.
        %----------------------------------------------------------------------------------------------------------------------------
        \item
        A \emph{Horn formula-in-context} (over $\Sigma$) is a pair of a context $\tup{x}$ and Horn formula $\phi$ (over $\Sigma$), written as $\tup{x}.\phi$, where all variables appearing in $\phi$ occur in $\tup{x}$.
        %----------------------------------------------------------------------------------------------------------------------------
        \item
        A \emph{Horn sequent} (over $\Sigma$) is a pair of two Horn formulas $\phi$ and $\psi$ (over $\Sigma$) in the same context $\tup{x}$, written as
        \begin{equation*}
            \phi \seq{\tup{x}} \psi.
        \end{equation*}
        %----------------------------------------------------------------------------------------------------------------------------
        \item
        A \emph{partial Horn theory} $\bT$ (over $\Sigma$) is a set of Horn sequents (over $\Sigma$).
    \end{enumerate}
\end{definition}

Note that we do not consider equal sign $=$ to be a relation symbol, and in this paper, we do not use any term (resp. formula) in no context, so we call a term-in-context (resp. formula-in-context) simply a term (resp. formula).
We informally use the abbreviation $\phi\biseq{\tup{x}}\psi$ for ``$(\phi\seq{\tup{x}}\psi)$ and $(\psi\seq{\tup{x}}\phi)$,'' and $\tau\defined$ for $\tau=\tau$.

\begin{definition}
    Let $\Sigma$ be an $S$-sorted signature.
    A \emph{partial $\Sigma$-structure} $M$ consists of:
    \begin{itemize}
        \item
        a set $M_s$ for each sort $s\in S$,
        \item
        a partial map $\intpn{f}{M}$ (or $\intpn{\tup{x}.f(\tup{x})}{M}$) $:M_{s_1}\times\dots\times M_{s_n}\pto M_s$ for each function symbol $f:s_1\times\dots\times s_n\to s$ in $\Sigma$,
        \item
        a subset $\intpn{R}{M}$ (or $\intpn{\tup{x}.R(\tup{x})}{M}$) $\subseteq M_{s_1}\times\dots\times M_{s_n}$ for each relation symbol $R:s_1\times\dots\times s_n$ in $\Sigma$.
    \end{itemize}
\end{definition}

We can easily extend the above definitions of $\intpn{\tup{x}.f(\tup{x})}{M}$ and $\intpn{\tup{x}.R(\tup{x})}{M}$ to arbitrary terms-in-context and Horn formulas-in-context as follows:
\begin{itemize}
    \item
    $\intpn{\tup{y}.f(\tau_1,\dots,\tau_n)}{M}(\tup{m})$ is defined if and only if all $\intpn{\tup{y}.\tau_i}{M}(\tup{m})$ are defined and
    \[
    \intpn{f}{M}(\intpn{\tup{y}.\tau_1}{M}(\tup{m}),\dots,\intpn{\tup{y}.\tau_n}{M}(\tup{m}))
    \]
    is also defined, 
    and then
    \[
    \intpn{\tup{y}.f(\tau_1,\dots,\tau_n)}{M}(\tup{m}):=\intpn{f}{M}(\intpn{\tup{y}.\tau_1}{M}(\tup{m}),\dots,\intpn{\tup{y}.\tau_n}{M}(\tup{m}));
    \]
    \item
    $\tup{m}$ belongs to $\intpn{\tup{y}.R(\tau_1,\dots,\tau_n)}{M}$ if and only if all $\intpn{\tup{y}.\tau_i}{M}(\tup{m})$ are defined and
    \[
    (\intpn{\tup{y}.\tau_1}{M}(\tup{m}),\dots,\intpn{\tup{y}.\tau_n}{M}(\tup{m}))
    \]
    belongs to $\intpn{R}{M}$;
    \item
    $\tup{m}$ belongs to $\intpn{\tup{y}.\tau=\tau'}{M}$ if and only if both $\intpn{\tup{y}.\tau}{M}(\tup{m})$ and $\intpn{\tup{y}.\tau'}{M}(\tup{m})$ are defined and equal to each other;
    \item
    $\intpn{\tup{x}.\phi\wedge\psi}{M}:=\intpn{\tup{x}.\phi}{M}\cap\intpn{\tup{x}.\psi}{M}$;
    \item
    $\intpn{\tup{x}.\top}{M}:=\prod_i M_{s_i}$, where $x_i{:}s_i$.
\end{itemize}

\begin{definition}
    We say that a Horn sequent $\phi\seq{\tup{x}}\psi$ over $\Sigma$ is \emph{valid} in a partial $\Sigma$-structure $M$ if $\intpn{\tup{x}.\phi}{M}\subseteq\intpn{\tup{x}.\psi}{M}$.
    A partial $\Sigma$-structure $M$ is called a \emph{partial $\bT$-model} for a partial Horn theory $\bT$ over $\Sigma$ if all sequents in $\bT$ are valid in $M$.
\end{definition}

\begin{definition}
    Let $\bT$ be a partial Horn theory over $\Sigma$.
    A Horn sequent $\phi\seq{\tup{x}}\psi$ over $\Sigma$ is called a \emph{PHL-theorem} of $\bT$ if it is valid in every partial model of $\bT$.
\end{definition}

An alternative definition of PHL-theorem can also be given syntactically using axioms and inference rules \cite{palmgren2007partial}. 
The above definition is equivalent to this by the completeness theorem.

\begin{definition}
    Let $\Sigma$ be an $S$-sorted signature.
    A \emph{$\Sigma$-homomorphism} $h:M\to N$ between partial $\Sigma$-structures consists of:
    \begin{itemize}
        \item a total map $h_s:M_s\to N_s$ for each sort $s\in S$
    \end{itemize}
    such that for each function symbol $f:s_1\times\dots\times s_n\to s$ in $\Sigma$ and relation symbol $R:s_1\times\dots\times s_n$ in $\Sigma$, there exist (necessarily unique) total maps (denoted by dashed arrows) making the following diagrams commute:
    \[
    \begin{tikzcd}[column sep=large, row sep=large]
        M_{s_1}\times\dots\times M_{s_n}\arrow[d,"h_{s_1}\times\dots\times h_{s_n}"'] &[-10pt] \mathrm{Dom}(\intpn{f}{M})\arrow[d,"\exists"',dashed]\arrow[l,hook']\arrow[r,"\intpn{f}{M}"] &[20pt] M_s\arrow[d,"h_s"] \\
        N_{s_1}\times\dots\times N_{s_n} & \mathrm{Dom}(\intpn{f}{N})\arrow[l,hook']\arrow[r,"\intpn{f}{N}"'] & N_s
    \end{tikzcd}
    \]
    \[
    \begin{tikzcd}[column sep=large, row sep=large]
        M_{s_1}\times\dots\times M_{s_n}\arrow[d,"{h_{s_1}\times\dots\times h_{s_n}}"'] &[-10pt] \intpn{R}{M}\arrow[d,"\exists"',dashed]\arrow[l,hook'] \\
        N_{s_1}\times\dots\times N_{s_n} & {\intpn{R}{N}} \arrow[l,hook']
    \end{tikzcd}
    \]
\end{definition}

\begin{notation}
    Let $\bT$ be a partial Horn theory over an $S$-sorted signature $\Sigma$.
    We will denote by $\Sigma\-\PStr$ the category of partial $\Sigma$-structures and $\Sigma$-homomorphisms and by $\bT\-\PMod$ the full subcategory of $\Sigma\-\PStr$ consisting of all partial $\bT$-models.
\end{notation}

\begin{theorem}\label{thm:PHT_characterize_lfp}
    A category $\A$ is locally finitely presentable if and only if there exists a partial Horn theory $\bT$ such that $\A\simeq\bT\-\PMod$.
\end{theorem}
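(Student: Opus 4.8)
The plan is to prove the two implications separately, handling $(\Leftarrow)$ by realizing $\bT\-\PMod$ as a small-orthogonality class inside an ambient locally finitely presentable category, and $(\Rightarrow)$ by a syntactic reconstruction of $\A$ from $\fp{\A}$.

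For $(\Leftarrow)$, I would first show that the category $\Sigma\-\PStr$ of \emph{all} partial $\Sigma$-structures is itself locally finitely presentable. The point is that a partial $\Sigma$-structure is precisely a model of an axiom-free essentially algebraic (equivalently, cartesian) theory: each partial function symbol $f$ is encoded by a domain subobject $\mathrm{Dom}(\intpn{f}{M})\hookrightarrow\prod_i M_{s_i}$ together with a total map out of it, and each relation symbol by a subobject, so the data defining an object of $\Sigma\-\PStr$ are cut out by finite limits and the cited structure theory applies. Here filtered colimits and products are computed sortwise on underlying sets, with partial operations and relations defined in the evident pointwise fashion. Next I would exhibit, for each Horn sequent $\phi\seq{\tup{x}}\psi$ of $\bT$, a morphism $e_{\phi,\psi}\colon P_\phi\to P_{\phi\wedge\psi}$ between finitely presentable partial structures, where $P_\phi$ and $P_{\phi\wedge\psi}$ are the ``term models'' of the formulas $\tup{x}.\phi$ and $\tup{x}.\phi\wedge\psi$ furnished by the construction of \cref{section2}, characterized by $\Sigma\-\PStr(P_\phi,M)\cong\intpn{\tup{x}.\phi}{M}$. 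Since validity $\intpn{\tup{x}.\phi}{M}\subseteq\intpn{\tup{x}.\psi}{M}$ is equivalent to the inclusion $\intpn{\tup{x}.\phi\wedge\psi}{M}\subseteq\intpn{\tup{x}.\phi}{M}$ being a bijection, a structure $M$ validates the sequent if and only if $M$ is orthogonal to $e_{\phi,\psi}$. As $\bT$ is a set of sequents, $\bT\-\PMod$ is then the orthogonality class of a set of morphisms between finitely presentable objects of $\Sigma\-\PStr$, hence reflective, closed under filtered colimits, and locally finitely presentable by \cite{adamek1994locally}.

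For $(\Rightarrow)$, given a locally finitely presentable $\A$, I would reconstruct it syntactically from its finitely presentable objects. By \cref{thm:basic_properties_of_lfp} the category $\fp{\A}$ is essentially small and is the finite colimit closure of a strong generator $\mathcal{G}$ of finitely presentable objects. I would take the set of sorts to be (a skeleton of) $\mathcal{G}$, introduce function symbols witnessing the morphisms in a presentation of $\fp{\A}$, and write down Horn sequents expressing functoriality together with the defining cones that force a model to preserve the relevant finite limits; in effect I transcribe into partial Horn syntax the finite-limit theory whose models are the finite-limit-preserving functors $\fp{\A}^{\op}\to\Set$. The correspondence of \cref{section2} between Horn formulas and finitely presentable objects is exactly what makes this transcription faithful: a Horn formula-in-context names a finitely presentable object, and evaluation of a model at that formula recovers the corresponding hom-set. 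Invoking Gabriel--Ulmer duality $\A\simeq\mathrm{Lex}(\fp{\A}^{\op},\Set)$ from \cite{adamek1994locally}, it remains to verify that $\bT\-\PMod\simeq\mathrm{Lex}(\fp{\A}^{\op},\Set)$.

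I expect the main obstacle to lie in this last equivalence for $(\Rightarrow)$: one must check that the chosen partial Horn axioms capture \emph{exactly} the finite-limit-preservation condition — neither too weak, so that spurious models appear, nor too strong — and that $\Sigma$-homomorphisms correspond precisely to natural transformations. Confirming that partiality of the operations correctly encodes the subobjects arising from the limit cones, and that the term-model construction is functorial and compatible with the orthogonality description used in $(\Leftarrow)$, is where the careful bookkeeping concentrates; the remaining closure and preservation properties follow routinely from computing all (co)limits sortwise.
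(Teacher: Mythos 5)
The paper offers no argument of its own here: its ``proof'' is a pointer to \cite[Theorem 9.6, Proposition 9.9]{palmgren2007partial} and \cite[{}1.46]{adamek1994locally}, so the real comparison is with those references, and your outline is a faithful reconstruction of their strategy rather than a new route. Your $(\Leftarrow)$ half is sound and is essentially the machinery the paper itself develops in \cref{section2}: $\repn{\tup{x}.\phi}_\varnothing$ represents $\intpn{\tup{x}.\phi}{\bullet}$ on $\Sigma\-\PStr=\varnothing\-\PMod$ (\cref{prop:repn_obj_represents_intpn}), validity of a sequent is orthogonality to the epimorphism $\repn{\tup{x}.\phi}_\varnothing\to\repn{\tup{x}.\phi\wedge\psi}_\varnothing$ between finitely presentable objects (\cref{prop:validity_for_PHL}), and a small-orthogonality class of this kind in the locally finitely presentable category $\Sigma\-\PStr$ is reflective, closed under filtered colimits, and hence locally finitely presentable. (For the local finite presentability of $\Sigma\-\PStr$ itself you could equally well argue directly with the strong generator $\mathcal{G}$ appearing in the proof of \cref{thm:repn_enumerates_fpobjects}, which avoids the detour through cartesian theories.)

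For $(\Rightarrow)$ your plan is the right one, but the step you defer is exactly where the content lies, and two points should be made explicit. First, the sorts should range over (a skeleton of) all of $\fp{\A}$, not just the strong generator $\mathcal{G}$: $\mathcal{G}$ need not be closed under the finite colimits whose duals you must axiomatize, and a finite-limit-preserving functor on $\fp{\A}^\op$ is not determined by arbitrary data on $\mathcal{G}^\op$ alone. Second, ``force a model to preserve the relevant finite limits'' conceals the only genuinely non-equational demand: injectivity of the comparison map from the value of $M$ at a finite colimit of $\fp{\A}$ to the limit of its values is a Horn sequent, but \emph{surjectivity} is an existential statement and is not Horn-expressible as such. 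The fix --- and the reason partial Horn logic suffices where ordinary Horn logic does not --- is to Skolemize: for each finite cone introduce a tupling function symbol whose domain is the Horn formula expressing compatibility of a family, with axioms making it a two-sided inverse of the comparison map. With that in place the verification that $\bT\-\PMod\simeq\mathrm{Lex}(\fp{\A}^\op,\Set)$ is the routine bookkeeping you describe, and this is precisely what \cite{palmgren2007partial} carries out via the equivalence of partial Horn theories with quasi-equational and essentially algebraic theories.
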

\begin{proof}
    See \cite[Theorem 9.6, Proposition 9.9]{palmgren2007partial} and \cite[{}1.46]{adamek1994locally}.
\end{proof}

\begin{example}[posets]
    We present the partial Horn theory $\bS_\pos$ for posets.
    Let $S:=\{ *\}$, $\Sigma_\pos:=\{ \le :*\times * \}$.
    The partial Horn theory $\bS_\pos$ over $\Sigma_\pos$ consists of:
    \begin{gather*}
        \top\seq{x}x\le x,\quad
        x\le y\wedge y\le x\seq{x,y}x=y,\quad
        x\le y\wedge y\le z\seq{x,y,z}x\le z.
    \end{gather*}
Then, we have $\bS_\pos\-\PMod\simeq\Pos$.
\end{example}

\begin{example}[small categories]\label{eg:PHT_for_smallcat}
    We present the partial Horn theory $\bS_\cat$ for small categories.
    Let $S:=\{ \ob, \mor\}$.
    The $S$-sorted signature $\Sigma_\cat$ consists of:
    \begin{gather*}
        \id:\ob\to\mor,\quad
        \mathrm{d}:\mor\to\ob,\quad
        \mathrm{c}:\mor\to\ob,\quad
        \circ:\mor\times\mor\to\mor.
    \end{gather*}
    The partial Horn theory $\bS_\cat$ over $\Sigma_\cat$ consists of:
    \begin{gather*}
        \top\seq{x{:}\ob}\id(x)\defined,\quad
        \top\seq{f{:}\mor}\mathrm{d}(f)\defined\wedge\mathrm{c}(f)\defined,\quad
        \mathrm{d}(g)=\mathrm{c}(f)\biseq{g,f{:}\mor}(g\circ f)\defined;\\
        \top\seq{x{:}\ob}\mathrm{d}(\id(x))=x\wedge\mathrm{c}(\id(x))=x;\\
        \mathrm{d}(g)=\mathrm{c}(f)\seq{g,f{:}\mor}\mathrm{d}(g\circ f)=\mathrm{d}(f)\wedge\mathrm{c}(g\circ f)=\mathrm{c}(g);\\
        \top\seq{f{:}\mor}f\circ\id(\mathrm{d}(f))=f\wedge \id(\mathrm{c}(f))\circ f=f;\\
        \mathrm{d}(h)=\mathrm{c}(g)\wedge\mathrm{d}(g)=\mathrm{c}(f)\seq{h,g,f{:}\mor}(h\circ g)\circ f=h\circ (g\circ f).
    \end{gather*}
Then, we have $\bS_\cat\-\PMod\simeq\Cat$.
\end{example}

We will denote by $(S,\Sigma,\bT)$ a partial Horn theory $\bT$ over an $S$-sorted signature $\Sigma$.

\begin{definition}
    Consider two partial Horn theories $(S,\Sigma,\bT)$ and $(S',\Sigma',\bT')$.
    A \emph{theory morphism}
    \[
    \rho:(S,\Sigma,\bT)\to(S',\Sigma',\bT')
    \]
    consists of:
    \begin{itemize}
        \item
        a map $S\ni s\mapsto s^\rho\in S'$;
        \item
        an assignment to each function symbol $f:s_1\times\dots\times s_n\to s$ in $\Sigma$, a function symbol $f^\rho:s_1^\rho\times\dots\times s_n^\rho\to s^\rho$ in $\Sigma'$;
        \item
        an assignment to each relation symbol $R:s_1\times\dots\times s_n$ in $\Sigma$, a relation symbol $R^\rho:s_1^\rho\times\dots\times s_n^\rho$ in $\Sigma'$
    \end{itemize}
    such that for every axiom $\phi\seq{\tup{x}}\psi$ in $\bT$, the \emph{$\rho$-translation} $\phi^\rho\seq{\tup{x}^\rho}\psi^\rho$ is a PHL-theorem of $\bT'$.
    Here $\tup{x}^\rho=(x_1^\rho{:}s_1^\rho,\dots,x_n^\rho{:}s_n^\rho)$ with $\tup{x}{:}\tup{s}$.
    The \emph{$\rho$-translation} $\phi^\rho$ and $\psi^\rho$ are constructed by replacing all symbols that $\phi$ and $\psi$ include by $\rho$.
    For more details we refer the reader to \cite{palmgren2007partial}.
\end{definition}

Our definition of theory morphisms seems strict since it completely depends on syntax.
A looser definition is introduced in \cite[Definition 9.10]{palmgren2007partial}.

A theory morphism $\rho:\bT\to\bT'$ induces the forgetful functor $U^\rho:\bT'\-\PMod\to\bT\-\PMod$.

\begin{theorem}
    For every theory morphism $\rho:\bT\to\bT'$, the forgetful functor $U^\rho$ has a left adjoint $F^\rho$.
\end{theorem}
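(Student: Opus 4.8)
The plan is to invoke the adjoint functor theorem for locally presentable categories. By \cref{thm:PHT_characterize_lfp} both $\bT\-\PMod$ and $\bT'\-\PMod$ are locally finitely presentable, hence in particular complete and cocomplete (\cref{thm:basic_properties_of_lfp}). Since a limit-preserving and accessible functor between locally presentable categories admits a left adjoint (the adjoint functor theorem for locally presentable categories, see \cite[1.66]{adamek1994locally}), it suffices to prove that $U^\rho$ preserves all limits and all filtered colimits; the latter yields accessibility, and the former is the substantive hypothesis.

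The key device is to compare everything with underlying sorted sets. Let $V:\bT\-\PMod\to\Set^S$ and $V':\bT'\-\PMod\to\Set^{S'}$ be the functors sending a model to its family of carriers. Writing $r:S\to S'$ for the sort map $s\mapsto s^\rho$ of the theory morphism, precomposition with $r$ gives a reindexing functor $r^*:\Set^{S'}\to\Set^S$ sending $(X_{s'})_{s'\in S'}$ to $(X_{s^\rho})_{s\in S}$. By the very construction of $U^\rho$, the carrier of $U^\rho N$ at sort $s$ is $N_{s^\rho}$, so that $V\circ U^\rho=r^*\circ V'$ on the nose. Here one uses that $U^\rho N$ is genuinely a $\bT$-model, which is exactly the defining condition on $\rho$ that each $\rho$-translation of an axiom of $\bT$ is a PHL-theorem of $\bT'$.

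Next I would recall that $V$ and $V'$ create limits and filtered colimits. Indeed, in $\Sigma\-\PStr$ both are computed carrierwise: a function symbol is defined on a tuple of a product exactly when it is defined componentwise, and on a filtered colimit exactly when it is defined on some representative, with relations treated likewise; and the full subcategories $\bT\-\PMod$, $\bT'\-\PMod$ are closed under these operations because Horn sequents are preserved by products and by filtered colimits (as in \cite{palmgren2007partial}). On the other hand $r^*$, having both Kan extensions along $r$ as adjoints, preserves all limits and all colimits. Combining this with $V\circ U^\rho=r^*\circ V'$: for a limit cone $\lambda$ over a diagram $D$ in $\bT'\-\PMod$, the image $V(U^\rho\lambda)=r^*(V'\lambda)$ is a limit cone in $\Set^S$, since $V'$ creates limits and $r^*$ preserves them; as $V$ creates, hence reflects, limits, the cone $U^\rho\lambda$ is itself a limit cone, so $U^\rho$ preserves limits. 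The identical argument with filtered colimits shows $U^\rho$ is finitary, in particular accessible.

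With these two properties in hand, the cited adjoint functor theorem produces the desired left adjoint $F^\rho\dashv U^\rho$. The main obstacle is not the abstract adjoint-functor step but the bookkeeping underlying it: confirming that the underlying-carrier functors $V,V'$ really create limits and filtered colimits in the category of partial structures, and that these are inherited by the model subcategories. This rests on the explicit description of limits and filtered colimits in partial Horn logic; once it is available, the strict compatibility $V\circ U^\rho=r^*\circ V'$ makes both preservation statements entirely formal.
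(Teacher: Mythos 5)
Your route is genuinely different from the paper's: the paper obtains $F^\rho$ by citing the explicit syntactic construction of \cite[Theorem 5.4]{palmgren2007partial}, whereas you deduce its existence non-constructively from the adjoint functor theorem for locally presentable categories, using \cref{thm:PHT_characterize_lfp} to see that both sides are locally finitely presentable. That strategy can be made to work, and it buys brevity at the cost of any description of $F^\rho$. However, one step fails as written. You claim that the carrier functors $V:\bT\-\PMod\to\Set^S$ and $V':\bT'\-\PMod\to\Set^{S'}$ \emph{create, hence reflect}, limits, and you use reflection to conclude that $U^\rho\lambda$ is a limit cone from the fact that $V(U^\rho\lambda)=r^*(V'\lambda)$ is one. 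But $V'$ is not conservative on categories of \emph{partial} structures: an identity-on-carriers $\Sigma'$-homomorphism that strictly enlarges the domain of definition of a function symbol (or the extent of a relation symbol) is bijective on every sort yet is not an isomorphism. For instance, for the one-object diagram $D=M$ with $\intpn{f}{M}$ everywhere defined, the identity-carrier cone from the structure $L$ with $\intpn{f}{L}$ nowhere defined is sent by $V'$ to a limit cone in $\Set^{S'}$ but is not a limit cone of partial structures. So neither strict creation nor reflection holds, and the deduction breaks exactly where partiality matters; the same objection applies to your filtered-colimit argument.

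The conclusion you need is still true, and the repair is local: compare the explicit constructions directly rather than reflecting along $V$. The limit $L$ of a diagram $D$ in $\bT'\-\PMod$ is computed on carriers, with $\intpn{f'}{L}$ defined at a tuple (and a tuple lying in $\intpn{R'}{L}$) if and only if the corresponding condition holds after applying every leg of the limit cone; the identical recipe computes the limit of $U^\rho D$ in $\bT\-\PMod$. Since a theory morphism in this paper sends each function or relation symbol to a single symbol of $\Sigma'$ rather than to a compound term or formula, definedness of $f^\rho$ and membership in $R^\rho$ in $L$ are detected legwise, so $U^\rho(\Lim{}D)$ and $\Lim{}(U^\rho D)$ carry literally the same partial structure; hence $U^\rho$ preserves limits. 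The same comparison against the construction of \cref{rem:filtered_colim_partial_model} shows $U^\rho$ is finitary. With those two facts established, your appeal to \cite[1.66]{adamek1994locally} does produce the left adjoint $F^\rho$.
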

\begin{proof}
    See \cite[Theorem 5.4]{palmgren2007partial}.
\end{proof}

%-----------------------------------------------------------------------------
\subsection{The correspondence between finitely presentable objects and Horn formulas}
%-----------------------------------------------------------------------------

\begin{definition}\label{def:T-term}
Let $\bT$ be a partial Horn theory over an $S$-sorted signature $\Sigma$.
Let $\tup{x}.\phi$ be a Horn formula-in-context over $\Sigma$, where $\tup{x}=(x_i{:}s_i)_{1\le i\le n}$.
\begin{enumerate}
    \item
    \emph{$\bT$-terms} (generated by $\tup{x}.\phi$) and their types are defined inductively as follows:
    \begin{itemize}
        \item
        $x_i~(1\le i\le n)$ is a $\bT$-term of type $s_i$;
        \item
        Given a function symbol $f:s'_1\times\dots\times s'_m\to s'$ and $\bT$-terms $\tau_j~(1{\le} j{\le} m)$ of type $s'_j$, 
        if $\phi\seq{\tup{x}}f(\tau_1,\dots,\tau_m)\defined$ is a PHL-theorem of $\bT$, then $f(\tau_1,\dots,\tau_m)$ is a $\bT$-term of type $s'$.
    \end{itemize}
    We write $\bT\-\Term(\tup{x}.\phi)$ for the $S$-sorted set of all $\bT$-terms generated by $\tup{x}.\phi$.
    %-------------------------------------------------------------------------------------------------------------------------------------------
    \item
    Define a congruence $\approx_\bT$ on $\bT\-\Term(\tup{x}.\phi)$ as follows: 
    $\tau\approx_\bT\tau'$ if and only if $\phi\seq{\tup{x}}\tau =\tau'$ is a PHL-theorem of $\bT$.
    We will denote by $[\tau]_\bT$ the equivalence class of $\tau$ by $\approx_\bT$.
    %-------------------------------------------------------------------------------------------------------------------------------------------
    \item
    The quotient $S$-sorted set $M:=\bT\-\Term(\tup{x}.\phi)/{\approx_\bT}$ becomes a partial $\Sigma$-structure as follows:
    \begin{itemize}
        \item
        For each function symbol $f\in\Sigma$, define the partial function
        \[
        \intpn{f}{M}:([\tau_1]_\bT,\dots,[\tau_m]_\bT)\mapsto [f(\tau_1,\dots,\tau_m)]_\bT
        \]
        if and only if $\phi\seq{\tup{x}}f(\tau_1,\dots,\tau_m)\defined$ is a PHL-theorem of $\bT$.
        %$\phi\seq{\tup{x}}f(\tau_1,\dots,\tau_m)\defined$がPHL-theorem of $\bT$であるときに限り$\intpn{f}{M}:([\tau_1]_\bT,\dots,[\tau_m]_\bT)\mapsto [f(\tau_1,\dots,\tau_m)]_\bT$と定義する.
        \item
        For each relation symbol $R\in\Sigma$, 
        \begin{equation*}
            \intpn{R}{M}:=\{([\tau_1]_\bT,\dots,[\tau_m]_\bT)\mid\phi\seq{\tup{x}}R(\tau_1,\dots,\tau_m)\text{ is a PHL-theorem of }\bT\}.
        \end{equation*}
    \end{itemize}
\end{enumerate}
\end{definition}

\begin{example}
    Let $\bS_\cat$ be the partial Horn theory for small categories as in \cref{eg:PHT_for_smallcat}.
    Then the term $\mathrm{d}(g\circ f)$ over $\Sigma_\cat$ is an $\bS_\cat$-term generated by $(g,f{:}\mor).\mathrm{d}(g)=\mathrm{c}(f)$ but not by $(g,f{:}\mor).\top$.
    The $\Sigma_\cat$ structure $\bS_\cat\-\Term((g,f{:}\mor).\mathrm{d}(g)=\mathrm{c}(f))/{\approx_{\bS_\cat}}$ is the small category $\3$:
    \[
    \3:=\{ \cdot\arr{f} \cdot\arr{g} \cdot\}.
    \]
\end{example}

\begin{lemma}\label{lem:basic_property_repn_model}
    Let $\bT$ be a partial Horn theory over an $S$-sorted signature $\Sigma$.
    Let $\tup{x}.\phi$ be a Horn formula-in-context and let $M:=\bT\-\Term(\tup{x}.\phi)/{\approx_\bT}$.
    \begin{enumerate}
        \item\label{lem:basic_property_repn_model-1}
        For any Horn formula-in-context $\tup{y}.\psi$, 
        \begin{equation*}
            \intpn{\tup{y}.\psi}{M} = \{[\tup{\tau}]_\bT\mid\phi\seq{\tup{x}}\psi(\tup{\tau}/\tup{y})\text{ is a PHL-theorem of }\bT\}.
        \end{equation*}
        %--------------------------------------------------------------------------
        \item
        $M$ is a $\bT$-model.
        %--------------------------------------------------------------------------
        \item
        $[\tup{x}]_\bT=([x_1]_\bT,\dots,[x_n]_\bT) \in \intpn{\tup{x}.\phi}{M}$ holds.
    \end{enumerate}
\end{lemma}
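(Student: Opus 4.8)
The plan is to prove part (i) first, by a structural induction on Horn formulas, and then to read off parts (ii) and (iii) as short corollaries; the genuinely inductive content will be isolated in two preliminary claims about terms. First I would record the basic fact that, for a term $\rho$ in context $\tup{x}$, the sequent $\phi\seq{\tup{x}}\rho\defined$ is a PHL-theorem of $\bT$ if and only if $\rho$ is a $\bT$-term generated by $\tup{x}.\phi$: the backward direction is immediate from the inductive clauses of \cref{def:T-term}, while the forward direction uses strictness of partial Horn logic (definedness of $f(\rho_1,\dots,\rho_m)$ forces definedness of each $\rho_j$). The technical heart is then a substitution lemma at the level of terms: for every term $\tup{y}.\sigma$ over $\Sigma$ and every tuple $[\tup{\tau}]_\bT$ of elements of $M$ (so each $\tau_i$ is a $\bT$-term generated by $\tup{x}.\phi$), the value $\intpn{\tup{y}.\sigma}{M}([\tup{\tau}]_\bT)$ is defined if and only if $\phi\seq{\tup{x}}\sigma(\tup{\tau}/\tup{y})\defined$ is a PHL-theorem, and in that case it equals $[\sigma(\tup{\tau}/\tup{y})]_\bT$. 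I would prove this by induction on $\sigma$: the variable case is immediate, and the case $\sigma=f(\sigma_1,\dots,\sigma_m)$ combines the induction hypothesis with the definition of $\intpn{f}{M}$ in \cref{def:T-term}, using that $f(\sigma_1,\dots,\sigma_m)(\tup{\tau}/\tup{y})$ is literally $f(\sigma_1(\tup{\tau}/\tup{y}),\dots,\sigma_m(\tup{\tau}/\tup{y}))$.

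With the term lemma in hand, part (i) follows by induction on the structure of $\psi$. For an atomic relational formula $\psi=R(\sigma_1,\dots,\sigma_m)$ I would unfold the definition of $\intpn{\tup{y}.R(\sigma_1,\dots,\sigma_m)}{M}$ and the definition of $\intpn{R}{M}$ from \cref{def:T-term}, matching the two through the term lemma; the definedness side-conditions align because, by strictness, $\phi\seq{\tup{x}}R(\dots)$ being a PHL-theorem already forces each argument to be provably defined. The equality case $\psi=(\sigma=\sigma')$ is handled the same way, using that $[\sigma(\tup{\tau}/\tup{y})]_\bT=[\sigma'(\tup{\tau}/\tup{y})]_\bT$ unwinds, by definition of $\approx_\bT$, to $\phi\seq{\tup{x}}\sigma(\tup{\tau}/\tup{y})=\sigma'(\tup{\tau}/\tup{y})$ being a PHL-theorem. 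The cases $\psi=\top$ and $\psi=\psi_1\wedge\psi_2$ are routine, the latter using that a conjunctive consequent splits into its conjuncts and that $\intpn{\tup{y}.\psi_1\wedge\psi_2}{M}=\intpn{\tup{y}.\psi_1}{M}\cap\intpn{\tup{y}.\psi_2}{M}$.

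Part (ii) then falls out: given an axiom $\chi\seq{\tup{z}}\chi'$ of $\bT$ and a tuple $[\tup{\tau}]_\bT\in\intpn{\tup{z}.\chi}{M}$, part (i) gives that $\phi\seq{\tup{x}}\chi(\tup{\tau}/\tup{z})$ is a PHL-theorem; substituting $\tup{\tau}$ (a tuple of $\bT$-terms, hence provably defined) into the axiom and cutting yields $\phi\seq{\tup{x}}\chi'(\tup{\tau}/\tup{z})$, so $[\tup{\tau}]_\bT\in\intpn{\tup{z}.\chi'}{M}$ by part (i) again. Part (iii) is the special case of part (i) with $\tup{y}.\psi:=\tup{x}.\phi$ and the identity substitution $\tup{\tau}:=\tup{x}$, since $\phi\seq{\tup{x}}\phi$ is trivially a PHL-theorem. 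The main obstacle is the partiality bookkeeping in the first paragraph: one must ensure that the ``defined iff provably defined'' equivalence and the well-definedness of $\intpn{f}{M}$ and $\intpn{R}{M}$ on $\approx_\bT$-classes are handled correctly, which rests on the strictness and congruence rules of partial Horn logic together with the completeness theorem relating PHL-theorems to validity in all models.
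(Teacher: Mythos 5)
Your proposal is correct: the paper itself dismisses this lemma with ``the proof is straightforward,'' and your argument --- a substitution lemma for terms established by induction, followed by structural induction on the Horn formula $\psi$, with (ii) and (iii) read off from (i) --- is exactly the routine induction the authors intend, including the correct attention to strictness and to well-definedness on $\approx_\bT$-classes. No gaps.
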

\begin{proof}
    The proof is straightforward.
\end{proof}

\begin{definition}\label{def:representing_model}
    Let $\bT$ be a partial Horn theory over an $S$-sorted signature $\Sigma$.
    For each Horn formula-in-context $\tup{x}.\phi$ over $\Sigma$, define
    \begin{equation*}
        \repn{\tup{x}.\phi}_\bT:=\bT\-\Term(\tup{x}.\phi)/{\approx_{\bT}}
        \quad\in\bT\-\PMod.
    \end{equation*}
    This $\repn{\tup{x}.\phi}_\bT$ is called the \emph{representing $\bT$-model for $\tup{x}.\phi$}.
\end{definition}

\begin{proposition}\label{prop:repn_obj_represents_intpn}
    Let $\bT$ be a partial Horn theory over an $S$-sorted signature $\Sigma$.
    Let $\tup{x}.\phi$ be a Horn formula-in-context over $\Sigma$.
    Then the representing model $\repn{\tup{x}.\phi}_\bT$ represents the interpretation functor $\intpn{\tup{x}.\phi}{\bullet}:\bT\-\PMod\ni M\mapsto\intpn{\tup{x}.\phi}{M}\in\Set$,
    i.e., for every $\bT$-model $M$, we have the following natural isomorphism:
    \begin{equation*}
        \intpn{\tup{x}.\phi}{M} \cong \bT\-\PMod(\repn{\tup{x}.\phi}_\bT,M).
    \end{equation*}
\end{proposition}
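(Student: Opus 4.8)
The plan is to exhibit an explicit natural isomorphism given by evaluation at the ``generic tuple.'' Write $R := \repn{\tup{x}.\phi}_\bT$ and recall from \cref{lem:basic_property_repn_model} that $[\tup{x}]_\bT = ([x_1]_\bT,\dots,[x_n]_\bT) \in \intpn{\tup{x}.\phi}{R}$. I would define, for each $\bT$-model $M$, the map
\[
\Phi_M : \bT\-\PMod(R,M) \longrightarrow \intpn{\tup{x}.\phi}{M},\qquad
h \longmapsto \intpn{\tup{x}.\phi}{h}([\tup{x}]_\bT) = (h_{s_1}[x_1]_\bT,\dots,h_{s_n}[x_n]_\bT).
\]
This is well defined because $\Sigma$-homomorphisms carry tuples satisfying a Horn formula to tuples satisfying the same formula --- equivalently, $\intpn{\tup{x}.\phi}{\bullet}$ really is a functor, which is exactly what lets one speak of ``representing'' it. Naturality of $\Phi_M$ in $M$ is immediate from the construction: for $k:M\to N$ both composites of the naturality square send $h$ to $(k_{s_i}h_{s_i}[x_i]_\bT)_i$. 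So the whole task reduces to showing that each $\Phi_M$ is a bijection.

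For injectivity I would use that $R$ is generated by $[\tup{x}]_\bT$ in a strong sense: every element of $R$ is of the form $[\tau]_\bT$ for some $\bT$-term $\tau$, and a short induction on $\tau$ gives $[\tau]_\bT = \intpn{\tup{x}.\tau}{R}([\tup{x}]_\bT)$. Since homomorphisms commute with the interpretation of terms, any $h$ must satisfy $h([\tau]_\bT) = \intpn{\tup{x}.\tau}{M}(\Phi_M(h))$, so $h$ is completely determined by $\Phi_M(h)$; hence $\Phi_M$ is injective.

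For surjectivity I would construct the inverse directly. Given $\tup{m} \in \intpn{\tup{x}.\phi}{M}$, define $h_{\tup{m}}([\tau]_\bT) := \intpn{\tup{x}.\tau}{M}(\tup{m})$ for each $\bT$-term $\tau$. Two well-definedness points must be checked, and both follow from the fact that a PHL-theorem is valid in every model, combined with \cref{lem:basic_property_repn_model}: first, $\intpn{\tup{x}.\tau}{M}(\tup{m})$ is defined, because $\tau$ being a $\bT$-term means $\phi\seq{\tup{x}}\tau\defined$ is a PHL-theorem, hence valid in $M$ at the point $\tup{m}\in\intpn{\tup{x}.\phi}{M}$; second, the value respects $\approx_\bT$, because $\tau\approx_\bT\tau'$ means $\phi\seq{\tup{x}}\tau=\tau'$ is a PHL-theorem, again valid in $M$. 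One then verifies that $h_{\tup{m}}$ is a $\Sigma$-homomorphism: the function-symbol condition follows from the inductive clause defining $\intpn{\tup{x}.f(\tau_1,\dots,\tau_m)}{M}$, and the relation-symbol condition follows from validity in $M$ of the sequents $\phi\seq{\tup{x}}R(\tau_1,\dots,\tau_m)$ that witness membership in $\intpn{R}{R}$. Finally $\Phi_M(h_{\tup{m}}) = (\intpn{\tup{x}.x_i}{M}(\tup{m}))_i = \tup{m}$, so $h_{\tup{m}}$ is a genuine preimage; together with injectivity this yields the bijection.

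The main obstacle is the bookkeeping in the surjectivity step: one needs that interpretations of terms and satisfaction of Horn formulas are preserved along $\Sigma$-homomorphisms and are governed by PHL-theorems. These are the standard ``homomorphism'' and ``substitution'' lemmas of partial Horn logic, proved by induction on the structure of terms and formulas; once they and \cref{lem:basic_property_repn_model} are in hand, every clause above is a short verification rather than a genuine difficulty. I expect no conceptual surprises --- the content is precisely that $\bT$-terms modulo provable equality encode exactly the elements definable from a generic solution of $\phi$, which is what makes evaluation at $[\tup{x}]_\bT$ a natural isomorphism.
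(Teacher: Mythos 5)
Your proposal is correct and follows essentially the same route as the paper's proof: evaluation at the generic tuple $[\tup{x}]_\bT$, injectivity by induction on $\bT$-terms, and surjectivity by defining $h_{\tup{m}}([\tau]_\bT):=\intpn{\tup{x}.\tau}{M}(\tup{m})$ with well-definedness coming from PHL-theorems being valid in $M$. The paper merely compresses the bookkeeping you spell out; there is no substantive difference.
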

\begin{proof}
    Take a $\bT$-model $M$ and a $\Sigma$-homomorphism $h:\repn{\tup{x}.\phi}_\bT\to M$.
    By induction on the structure of $\bT$-term $[\tau]_\bT$, $h([\tau]_\bT)$ can only be $\intpn{\tup{x}.\tau}{M}(h([\tup{x}]_\bT))$.
    Conversely, given $\tup{m}\in\intpn{\tup{x}.\phi}{M}$, 
    $h([\tau]_\bT):=\intpn{\tup{x}.\tau}{M}(\tup{m})$ yields a well-defined $\Sigma$-homomorphism $h:\repn{\tup{x}.\phi}_\bT\to M$.
    Thus, a $\Sigma$-homomorphism $h:\repn{\tup{x}.\phi}_\bT\to M$ corresponds bijectively to $\tup{m}\in\intpn{\tup{x}.\phi}{M}$.
\end{proof}

\begin{corollary}\label{cor:morphism_between_fp}
    Let $\bT$ be a partial Horn theory over an $S$-sorted signature $\Sigma$ and let $\tup{x}.\phi$ and $\tup{y}.\psi$ be Horn formulas over $\Sigma$ with $\tup{x}=(x_1,\dots,x_n)$ and $\tup{y}=(y_1,\dots,y_m)$.
    Then the following data correspond bijectively to each other:
    \begin{enumerate}
        \item
        A $\Sigma$-homomorphism $h:\repn{\tup{x}.\phi}_\bT\to\repn{\tup{y}.\psi}_\bT$,
        \item
        Equivalence classes $[\tau_1]_\bT,\dots,[\tau_n]_\bT$ of $\bT$-terms generated by $\tup{y}.\psi$ such that $\psi\seq{\tup{y}}\phi(\tup{\tau}/\tup{x})$ is a PHL-theorem of $\bT$.
    \end{enumerate}
\end{corollary}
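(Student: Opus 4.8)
The plan is to deduce this bijection directly by feeding the explicit description of interpretations in representing models into the universal property established in \cref{prop:repn_obj_represents_intpn}. Concretely, I would instantiate \cref{prop:repn_obj_represents_intpn} at the particular $\bT$-model $M := \repn{\tup{y}.\psi}_\bT$. This at once yields a bijection between $\Sigma$-homomorphisms $h:\repn{\tup{x}.\phi}_\bT\to\repn{\tup{y}.\psi}_\bT$ and elements of $\intpn{\tup{x}.\phi}{\repn{\tup{y}.\psi}_\bT}$, so the only remaining task is to identify this last set with the data described in (ii).

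For that identification I would invoke the first part of \cref{lem:basic_property_repn_model}, taking care to exchange the roles of the two formulas relative to how they appear there: use $\tup{y}.\psi$ as the formula generating the model (so that $M = \bT\-\Term(\tup{y}.\psi)/{\approx_\bT}$) and use $\tup{x}.\phi$ as the formula whose interpretation is computed. The lemma then gives
\[
\intpn{\tup{x}.\phi}{\repn{\tup{y}.\psi}_\bT} = \{[\tup{\tau}]_\bT \mid \psi\seq{\tup{y}}\phi(\tup{\tau}/\tup{x})\text{ is a PHL-theorem of }\bT\},
\]
where $\tup{\tau}=(\tau_1,\dots,\tau_n)$ ranges over tuples of $\bT$-terms generated by $\tup{y}.\psi$ and $[\tup{\tau}]_\bT=([\tau_1]_\bT,\dots,[\tau_n]_\bT)$. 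Since an element $[\tup{\tau}]_\bT$ of this set is exactly a tuple of equivalence classes meeting the condition in (ii), composing the two bijections completes the argument.

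The reasoning is essentially bookkeeping, so the only real obstacle is notational precision: one must ensure that $\phi(\tup{\tau}/\tup{x})$ denotes substitution of the chosen terms $\tau_1,\dots,\tau_n$ (each of the correct type $s_i$) for the context variables $x_1,\dots,x_n$, and that membership of $[\tup{\tau}]_\bT$ in $\intpn{\tup{x}.\phi}{M}$ corresponds to validity of the sequent $\psi\seq{\tup{y}}\phi(\tup{\tau}/\tup{x})$ and not of some variant with the wrong context or premise. Once the variables and types are aligned, nothing further is required, since naturality and well-definedness have already been dealt with in \cref{prop:repn_obj_represents_intpn} and \cref{lem:basic_property_repn_model}.
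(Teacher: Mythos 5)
Your proposal is correct and follows exactly the paper's own argument: the paper likewise composes the representability bijection of \cref{prop:repn_obj_represents_intpn} applied at $M=\repn{\tup{y}.\psi}_\bT$ with the explicit description of $\intpn{\tup{x}.\phi}{\repn{\tup{y}.\psi}_\bT}$ from \cref{lem:basic_property_repn_model}\ref{lem:basic_property_repn_model-1} (with the roles of the two formulas swapped just as you describe). Nothing further is needed.
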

\begin{proof}
    By \cref{prop:repn_obj_represents_intpn} and \cref{lem:basic_property_repn_model}\ref{lem:basic_property_repn_model-1}, we have:
    \begin{align*}
        \bT\-\PMod(\repn{\tup{x}.\phi}_\bT , \repn{\tup{y}.\psi}_\bT)
        &\cong \intpn{\tup{x}.\phi}{\repn{\tup{y}.\psi}_\bT}
        \\
        &= \{[\tup{\tau}]_\bT \mid \psi\seq{\tup{y}}\phi(\tup{\tau}/\tup{x})\text{ is a PHL-theorem of }\bT\}.
    \end{align*}
    This completes the proof.
\end{proof}

\begin{notation}
    Denote by
    \[
    \repn{\tau_1,\dots,\tau_n}_\bT:\repn{\tup{x}.\phi}_\bT\to\repn{\tup{y}.\psi}_\bT
    \]
    the morphism corresponding to $\bT$-terms $\tau_1,\dots,\tau_n$ by \cref{cor:morphism_between_fp}.
\end{notation}

\begin{remark}\label{rem:filtered_colim_partial_model}
    Given a filtered diagram $M_\bullet :\bI\to\bT\-\PMod$, let us construct a colimit $N=\Colim{I\in\bI}M_I$ in $\bT\-\PMod$.
    For each sort $s\in S$ define $N_s:=\Colim{I\in\bI}(M_I)_s$ as a colimit in $\Set$, i.e., 
    $N_s$ is the quotient set of the disjoint union $\coprod_{I\in\bI}(M_I)_s=\{ (I;x) \mid I\in\bI, x\in (M_I)_s \}$ by an equivalence relation $\sim_s$.
    Here $(I;x)\sim_s (J;y)$ holds if and only if there exists a span
    $
    \begin{tikzcd}
        I\arrow[r,"p"] & K & J\arrow[l,"q"']
    \end{tikzcd}
    $
    in $\bI$ satisfying $M_p(x)=M_q(y)$.
    Let $[I;x]$ denote the equivalence class with respect to $\sim_s$ containing $(I;x)$.
    
    Now the $S$-sorted set $N=(N_s)_{s\in S}$ yields a partial $\Sigma$-structure as follows:
    \begin{itemize}
        \item
        For each function symbol $f:s_1\times\dots\times s_n\to s$ in $\Sigma$, $\intpn{f}{N}([I_1;x_1],\dots,[I_n;x_n])$ is defined if and only if there exists a cocone $(I_i\longarr{p_i}I)_{1\le i\le n}$ such that $(M_{p_1}(x_1),\dots,M_{p_n}(x_n))$ belongs to the domain of $\intpn{f}{M_I}$, and then define
        \[
        \intpn{f}{N}([I_1;x_1],\dots,[I_n;x_n]):=[I;\intpn{f}{M_I}(M_{p_1}(x_1),\dots,M_{p_n}(x_n))];
        \]
        \item
        For each relation symbol $R:s_1\times\dots\times s_n$ in $\Sigma$, $([I_1;x_1],\dots,[I_n;x_n])$ belongs to $\intpn{R}{N}$ if and only if there exists a cocone $(I_i\longarr{p_i}I)_{1\le i\le n}$ such that $(M_{p_1}(x_1),\dots,M_{p_n}(x_n))$ belongs to $\intpn{R}{M_I}$.
    \end{itemize}
    Since each $M_I$ is $\bT$-model, we see that $N$ is a $\bT$-model and a colimit of the diagram $M_\bullet$.
\end{remark}

\begin{theorem}\label{thm:repn_enumerates_fpobjects}
    Let $\bT$ be a partial Horn theory over an $S$-sorted signature $\Sigma$.
    Then, for any $\bT$-model $M$, the following are equivalent:
    \begin{enumerate}
        \item
        $M$ is a finitely presentable object in $\bT\-\PMod$.
        \item\label{thm:repn_enumerates_fpobjects-2}
        There exists a Horn formula-in-context $\tup{x}.\phi$ such that $M\cong\repn{\tup{x}.\phi}_\bT$ in $\bT\-\PMod$.
    \end{enumerate}
\end{theorem}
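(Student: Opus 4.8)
The plan is to prove the two implications separately, with the direction $(\ref{thm:repn_enumerates_fpobjects-2})\Rightarrow(1)$ being the easier one and already essentially available from the machinery built up in the excerpt.

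For $(\ref{thm:repn_enumerates_fpobjects-2})\Rightarrow(1)$, I would argue that each representing model $\repn{\tup{x}.\phi}_\bT$ is finitely presentable. By \cref{prop:repn_obj_represents_intpn}, the functor $\bT\-\PMod(\repn{\tup{x}.\phi}_\bT,\bullet)$ is naturally isomorphic to the interpretation functor $\intpn{\tup{x}.\phi}{\bullet}$, so it suffices to show the latter preserves filtered colimits. Using the explicit construction of filtered colimits in \cref{rem:filtered_colim_partial_model}, I would verify that $\intpn{\tup{x}.\phi}{\Colim{I}M_I}\cong\Colim{I}\intpn{\tup{x}.\phi}{M_I}$. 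Concretely, an element of the left-hand side is a tuple $([I_1;m_1],\dots,[I_n;m_n])$ satisfying $\phi$; since $\phi$ is a finite conjunction of atomic formulas involving finitely many terms, and filtered colimits let us push all the relevant data along a common cocone into a single stage $M_I$, the tuple already arises from some $\tup{m}\in\intpn{\tup{x}.\phi}{M_I}$. This is the standard ``finite syntactic data factors through a single stage'' argument, and the finiteness of the context together with the conjunctive, atomic shape of Horn formulas is exactly what makes it go through.

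For $(1)\Rightarrow(\ref{thm:repn_enumerates_fpobjects-2})$, the strategy is to exhibit $\bT\-\PMod$ as generated under filtered colimits by the representing models and then invoke a retract/idempotent-splitting argument. First I would observe that \emph{every} $\bT$-model $M$ is a filtered colimit of representing models: the canonical diagram of ``finite pieces'' of $M$, indexed by tuples of elements together with the atomic facts they satisfy, is filtered, and each piece is of the form $\repn{\tup{x}.\phi}_\bT$ by \cref{prop:repn_obj_represents_intpn} (a generalized element $\tup{m}\in\intpn{\tup{x}.\phi}{M}$ is exactly a map $\repn{\tup{x}.\phi}_\bT\to M$). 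Granting this, if $M$ is finitely presentable, the identity $M\to M=\Colim{}\repn{\tup{x}_I.\phi_I}_\bT$ factors through one of the representing models, making $M$ a \emph{retract} of some $\repn{\tup{x}.\phi}_\bT$. It then remains to show that such a retract is again representable.

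The main obstacle I anticipate is precisely this last point: closing the class of representing models under retracts. The clean way to handle it is to show that a retract of $\repn{\tup{x}.\phi}_\bT$ is represented by a formula obtained by adjoining to $\phi$ the equations and relations that cut out the image of the splitting idempotent. Given an idempotent $e=\repn{\tup{\tau}}_\bT$ on $\repn{\tup{x}.\phi}_\bT$ (described by $\bT$-terms via \cref{cor:morphism_between_fp}), I would build a new formula $\tup{x}.\phi'$ where $\phi'$ augments $\phi$ with the conjunction of equations $x_i=\tau_i$ (asserting that $\tup{x}$ is a fixed point of $e$) and verify, using \cref{lem:basic_property_repn_model} and \cref{prop:repn_obj_represents_intpn}, that $\repn{\tup{x}.\phi'}_\bT$ carries the universal property of the splitting. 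The delicate part is checking that the equalizer/splitting computed in $\bT\-\PMod$ agrees with the syntactically-defined representing model, i.e.\ that the needed PHL-theorems genuinely follow from $\bT$; this is where the explicit correspondence of \cref{cor:morphism_between_fp} between homomorphisms and term-tuples does the real work, reducing a semantic splitting condition to a provability statement about Horn sequents.
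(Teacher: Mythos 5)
Your proof of (\ref{thm:repn_enumerates_fpobjects-2})$\Rightarrow$(i) is exactly the paper's: represent the hom-functor by the interpretation functor via \cref{prop:repn_obj_represents_intpn} and check preservation of filtered colimits against the explicit construction in \cref{rem:filtered_colim_partial_model}. For the converse you take a genuinely different route. The paper exhibits the small strong generator $\mathcal{G}$ given by the atomic formulas $(x{:}s).\top$, $\tup{x}.f(\tup{x})\defined$, $\tup{x}.R(\tup{x})$, invokes \cref{thm:basic_properties_of_lfp}\ref{thm:basic_properties_of_lfp-1} to identify $\fp{\bT\-\PMod}$ with the finite colimit closure of $\mathcal{G}$, and then closes the class of representing models under finite coproducts (conjunction of formulas in disjoint contexts) and coequalizers (adjoining equations $\tau_i=\tau'_i$). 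You instead write an arbitrary model as the canonical filtered colimit of representing models, deduce that a finitely presentable $M$ is a retract of some $\repn{\tup{x}.\phi}_\bT$, and split the idempotent $e=\repn{\tup{\tau}}_\bT$ syntactically via $\phi':=\phi\wedge\bigwedge_i x_i=\tau_i$; the representability computation $\bT\-\PMod(\repn{\tup{x}.\phi'}_\bT,A)\cong\{g\mid ge=g\}$ does verify that $\repn{\tup{x}.\phi'}_\bT$ is the splitting, so this step is sound and is a nice explicit replacement for what the paper hides inside the citation of \cite[Remark 2.1.6]{makkai1989accessible} (retracts as absolute coequalizers). What your route buys is independence from the finite-colimit-closure theorem; what it costs is that the assertion ``the canonical diagram of representing models over $M$ is filtered with colimit $M$'' is the load-bearing step and is only asserted. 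To prove filteredness you need upper bounds for pairs of objects and coequalizing arrows for parallel pairs in the comma category, which are exactly the two formula constructions the paper carries out, and you must additionally check (using \cref{rem:filtered_colim_partial_model}) that the comparison map from the colimit to $M$ is an isomorphism on each sort and reflects the $\Sigma$-structure. None of this fails, but it should be written out: as it stands your argument has not actually avoided the coproduct and coequalizer computations, only relocated them.
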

\begin{proof}
Let $\fp{\bT\-\PMod}$ denote the class of all finitely presentable objects in $\bT\-\PMod$, and let $\mathcal{C}$ denote the class of all $\bT$-models satisfying \ref{thm:repn_enumerates_fpobjects-2}.
We first observe $\mathcal{C}\subseteq\fp{\bT\-\PMod}$.
By \cref{prop:repn_obj_represents_intpn}, this assertion is equivalent to saying that for every Horn formula-in-context $\tup{x}.\phi$, the interpretation functor $\intpn{\tup{x}.\phi}{\bullet}:\bT\-\PMod\to\Set$ preserves filtered colimits, which follows from the construction of filtered colimits in $\bT\-\PMod$.

We next consider the following set $G$ of atomic formulas-in-context:
\begin{multline*}
    G:=
    \{ (x{:}s).\top \mid s\in S \}
    \cup
    \{ \tup{x}.f(\tup{x})\defined \mid f\text{ is a function symbol in }\Sigma. \}
    \\
    \cup
    \{ \tup{x}.R(\tup{x}) \mid R\text{ is a relation symbol in }\Sigma.\}.
\end{multline*}
Then $\mathcal{G}:=\{\repn{\tup{x}.\phi}_\bT \mid \tup{x}.\phi\in G\}$ forms a (small) strong generator for $\bT\-\PMod$, that is, the functor $\bT\-\PMod\in M\mapsto \{\bT\-\PMod(\repn{\tup{x}.\phi}_\bT,M)\}_{\tup{x}.\phi\in G}\cong
\{\intpn{\tup{x}.\phi}{M}\}_{\tup{x}.\phi\in G}\in\Set^G$ is conservative and faithful.
From $\mathcal{G}\subseteq\mathcal{C}\subseteq\fp{\bT\-\PMod}$ it follows that $\fp{\bT\-\PMod}$ is the finite colimit closure of $\mathcal{G}$ by \cref{thm:basic_properties_of_lfp}\ref{thm:basic_properties_of_lfp-1} since $\bT\-\PMod$ is locally finitely presentable by \cref{thm:PHT_characterize_lfp}.
So it suffices to prove that $\mathcal{C}$ is closed under finite colimits in $\bT\-\PMod$.

We first prove the case of finite coproducts.
Let $M_k\,(1\le k\le n)$ be $\bT$-models belonging to $\mathcal{C}$.
For each $k$, take a Horn formula-in-context $\tup{x}_k.\phi_k$ satisfying $M_k\cong\repn{\tup{x}_k.\phi_k}_\bT$.
There is no loss of generality in assuming that $\tup{x}_k$ and $\tup{x}_{k'}$ contain no common variable when $k\neq k'$.
Then for every $\bT$-model $A$, we have the following by \cref{prop:repn_obj_represents_intpn}:
\begin{align*}
    \bT\-\PMod\left(\repn{(\tup{x}_k)_{1\le k\le n}.\bigwedge_{1\le k\le n}\phi_k}_\bT,A\right)
    &\cong\intpn{(\tup{x}_k)_{1\le k\le n}.\bigwedge_{1\le k\le n}\phi_k}{A}
    \\
    &\cong\prod_{1\le k\le n}\intpn{\tup{x}_k.\phi_k}{A}
    \\
    &\cong\prod_{1\le k\le n}\bT\-\PMod(\repn{\tup{x}_k.\phi_k}_\bT,A).
\end{align*}
This implies that $\repn{(\tup{x}_k)_{1\le k\le n}.\bigwedge_{1\le k\le n}\phi_k}_\bT$ is a coproduct of $M_k\,(1\le k\le n)$ and belongs to $\mathcal{C}$.

We next prove the case of coequalizers.
Consider parallel morphisms
\begin{tikzcd}
    M\arrow[r,"h",shift left]\arrow[r,"h'"',shift right] & N,
\end{tikzcd}
where $M,N\in\mathcal{C}$.
Take Horn formulas $(x_1,\dots,x_n).\phi$ and $(y_1,\dots,y_m).\psi$ satisfying $M\cong\repn{\tup{x}.\phi}_\bT$ and $N\cong\repn{\tup{y}.\psi}_\bT$ respectively.
For simplicity, we assume $M=\repn{\tup{x}.\phi}_\bT$ and $N=\repn{\tup{y}.\psi}_\bT$.
By \cref{cor:morphism_between_fp}, $h$ and $h'$ are written as $h=\repn{\tup{\tau}}_\bT$ and $h'=\repn{\tup{\tau}'}_\bT$.
Considering a Horn formula $\chi:=\left( \psi\wedge\bigwedge_{1\le i\le n}\tau_i=\tau'_i \right)$, we have
\begin{align*}
    \bT\-\PMod( \repn{\tup{y}.\chi}_\bT , A )
    &\cong \intpn{\tup{y}.\chi}{A}
    \\
    &=  \{ \tup{a}\in\intpn{\tup{y}.\psi}{A} \mid \forall i,\,\intpn{\tup{y}.\tau_i}{A}(\tup{a})=\intpn{\tup{y}.\tau'_i}{A}(\tup{a}) \}
    \\
    &\cong \{ g\in\bT\-\PMod(\repn{\tup{y}.\psi}_\bT,A) \mid gh=gh' \}
\end{align*}
for every $\bT$-model $A$ by \cref{prop:repn_obj_represents_intpn}.
This implies that $\repn{\tup{y}.\chi}_\bT$ is a coequalizer of $h$ and $h'$, and belongs to $\mathcal{C}$.
\end{proof}

\begin{proposition}\label{prop:validity_for_PHL}
    Let $\bT$ be a partial Horn theory over an $S$-sorted signature $\Sigma$.
    Let $\phi\seq{\tup{x}}\psi$ be a Horn sequent.
    Then the morphism
    \[
    \repn{\tup{x}.\phi}_\bT\longarr{\repn{\tup{x}}_\bT}\repn{\tup{x}.\phi\wedge\psi}_\bT \incat{\bT\-\PMod}
    \]
    is an epimorphism, and for any $\bT$-model $M$, the following are equivalent:
    \begin{enumerate}
        \item
        $\phi\seq{\tup{x}}\psi$ is valid in $M$,
        \item\label{prop:validity_for_PHL-2}
        $M$ is orthogonal to $\repn{\tup{x}}_\bT:\repn{\tup{x}.\phi}_\bT\to\repn{\tup{x}.\phi\wedge\psi}_\bT$, 
        i.e., for any $\Sigma$-homomorphism $h:\repn{\tup{x}.\phi}_\bT\to M$, there exists a (necessarily unique) $\Sigma$-homomorphism $\hat{h}:\repn{\tup{x}.\phi\wedge\psi}_\bT\to M$ satisfying $\hat{h}\circ\repn{\tup{x}}_\bT=h$.
    \end{enumerate}
    \begin{equation*}
        \begin{tikzcd}
            \repn{\tup{x}.\phi}_\bT\arrow[r,"h"]\arrow[d,"\repn{\tup{x}}_\bT"'] & M \\
            \repn{\tup{x}.\phi\wedge\psi}_\bT\arrow[ru,"\exists!\hat{h}"',dashed] &
        \end{tikzcd}
    \end{equation*}
\end{proposition}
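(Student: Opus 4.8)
\section*{Proof proposal}

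The plan is to reduce both assertions to the representability statement of \cref{prop:repn_obj_represents_intpn}, thereby turning all questions about $\Sigma$-homomorphisms into elementary questions about the subsets $\intpn{\tup{x}.\phi}{A}$. First I would record that $\repn{\tup{x}}_\bT$ is well-defined in the sense of \cref{cor:morphism_between_fp}: the variables $x_1,\dots,x_n$ are $\bT$-terms generated by $\tup{x}.(\phi\wedge\psi)$, and the required side-condition $(\phi\wedge\psi)\seq{\tup{x}}\phi$ is plainly a PHL-theorem of $\bT$, so the listed terms indeed name a morphism $\repn{\tup{x}.\phi}_\bT\to\repn{\tup{x}.\phi\wedge\psi}_\bT$.

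The crux is to compute, for an arbitrary $\bT$-model $A$, the precomposition map
\[
(-)\circ\repn{\tup{x}}_\bT:\bT\-\PMod(\repn{\tup{x}.\phi\wedge\psi}_\bT,A)\longrightarrow\bT\-\PMod(\repn{\tup{x}.\phi}_\bT,A)
\]
through the natural isomorphisms of \cref{prop:repn_obj_represents_intpn}. I claim this map corresponds precisely to the subset inclusion
\[
\intpn{\tup{x}.\phi\wedge\psi}{A}=\intpn{\tup{x}.\phi}{A}\cap\intpn{\tup{x}.\psi}{A}\ \hookrightarrow\ \intpn{\tup{x}.\phi}{A}.
\]
By naturality in $A$ the induced natural transformation is pinned down by its value on the universal element $[\tup{x}]_\bT$: a homomorphism $g:\repn{\tup{x}.\phi\wedge\psi}_\bT\to A$ corresponds to $g([\tup{x}]_\bT)=\tup{a}\in\intpn{\tup{x}.\phi\wedge\psi}{A}$, and since $\repn{\tup{x}}_\bT$ sends $[\tup{x}]_\bT$ to $[\tup{x}]_\bT$, the composite $g\circ\repn{\tup{x}}_\bT$ corresponds to the same tuple $\tup{a}$, now regarded inside the larger set $\intpn{\tup{x}.\phi}{A}$. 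Thus precomposition is the inclusion, as claimed. This bookkeeping through the Yoneda-type isomorphism is the one genuinely delicate point, and I expect it to be the main (if modest) obstacle.

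Granting the identification, both statements fall out formally. For the epimorphism claim, $\repn{\tup{x}}_\bT$ is epic if and only if the above precomposition is injective for every $A$; a subset inclusion is always injective, so $\repn{\tup{x}}_\bT$ is an epimorphism. For the equivalence, because $\repn{\tup{x}}_\bT$ is already epic the uniqueness in the orthogonality condition is automatic, so orthogonality of $M$ reduces to the \emph{existence} of a lift $\hat{h}$ of every $h:\repn{\tup{x}.\phi}_\bT\to M$. Under the identification, $h$ corresponds to some $\tup{m}\in\intpn{\tup{x}.\phi}{M}$, and a lift $\hat{h}$ corresponds to a preimage of $\tup{m}$ along $\intpn{\tup{x}.\phi\wedge\psi}{M}\hookrightarrow\intpn{\tup{x}.\phi}{M}$; such a preimage exists (necessarily uniquely) exactly when $\tup{m}\in\intpn{\tup{x}.\phi\wedge\psi}{M}$. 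Hence $M$ is orthogonal to $\repn{\tup{x}}_\bT$ iff $\intpn{\tup{x}.\phi}{M}\subseteq\intpn{\tup{x}.\phi\wedge\psi}{M}=\intpn{\tup{x}.\phi}{M}\cap\intpn{\tup{x}.\psi}{M}$, which is equivalent to $\intpn{\tup{x}.\phi}{M}\subseteq\intpn{\tup{x}.\psi}{M}$, i.e.\ to the validity of $\phi\seq{\tup{x}}\psi$ in $M$. This establishes the equivalence and completes the argument.
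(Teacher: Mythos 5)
Your proposal is correct and follows essentially the same route as the paper: the paper's (much terser) proof likewise derives the epimorphism claim from \cref{prop:repn_obj_represents_intpn} and then observes that existence of the lift $\hat{h}$ is equivalent to the inclusion $\intpn{\tup{x}.\phi}{M}\subseteq\intpn{\tup{x}.\phi\wedge\psi}{M}$. Your explicit identification of precomposition with the subset inclusion is exactly the bookkeeping the paper leaves implicit, and it is carried out correctly.
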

\begin{proof}
    \cref{prop:repn_obj_represents_intpn} ensures that $\repn{\tup{x}}_\bT$ is an epimorphism, hence the uniqueness of $\hat{h}$ in \ref{prop:validity_for_PHL-2} always holds.
    The existence of $\hat{h}$ for every $h$ is clearly equivalent to $\intpn{\tup{x}.\phi}{M}\subseteq\intpn{\tup{x}.\phi\wedge\psi}{M}$.
\end{proof}

%-----------------------------------------------------------------------------
\section{Relative algebraic theories}\label{section3}
%-----------------------------------------------------------------------------
We fix an $S$-sorted signature $\Sigma$ and a partial Horn theory $\bS$ over $\Sigma$ throughout this section, and define \emph{$\bS$-relative algebraic theories}.
Note that we define $\bS$-relative algebraic theories, not ``$\A(\simeq\bS\-\PMod)$-relative algebraic theories''.
However, we may call an $\bS$-relative algebraic theory an ``$\A$-relative algebraic theory'' by \cref{cor:S-rat_is_independent_of_S}.
Later, we will show that $\bS$-relative algebraic theories correspond with finitary monads on $\bS\-\PMod$ (\cref{thm:equiv_between_monad_and_rat}).
We only show one direction of such correspondence here.

%-----------------------------------------------------------------------------
\subsection{Relative algebras}
%-----------------------------------------------------------------------------
In an ordinary multi-sorted algebraic theory, each operator has an arity as a finite tuple of sorts, which represents its domain.
We generalize the concept of arity from finite tuple of sorts to Horn formula-in-context.
Since Horn formulas-in-context correspond to finitely presentable objects by \cref{thm:repn_enumerates_fpobjects}, this generalization is closely related to \cite{kelly1993adjunctions} and \cite{rosicky2021metric}.

\begin{definition}\quad
\begin{enumerate}
    \item
    An \emph{$\bS$-relative signature} $\Omega$ is a set $\Omega$ such that for each element $\omega\in\Omega$, a Horn formula-in-context $\tup{x}.\phi$ over $\Sigma$ and a sort $s\in S$ are given.
    The Horn formula-in-context $\tup{x}.\phi$ is called an \emph{arity} of $\omega$ and written as $\ar(\omega)$.
    The sort $s$ is called a \emph{type} of $\omega$ and written as $\type(\omega)$.
    \item
    Given an $\bS$-relative signature $\Omega$, 
    each $\omega\in\Omega$ can be regarded as a function symbol $\omega:s_1\times\dots\times s_n\to s$ if $\type(\omega)=s$ and $\ar(\omega)$ is in the context $x_1{:}s_1,\dots,x_n{:}s_n$.
    Denote by $\Sigma+\Omega$ the $S$-sorted signature obtained by adding to $\Sigma$ all $\omega\in\Omega$ in this way.
    A Horn sequent $\phi\seq{\tup{x}}\psi$ over $\Sigma+\Omega$ is called an \emph{$\bS$-relative judgment} if $\phi$ is over $\Sigma$, i.e., if $\phi$ contains no function symbol derived from $\Omega$.
    \item
    An \emph{$\bS$-relative algebraic theory} is a pair $(\Omega,E)$ of an $\bS$-relative signature $\Omega$ and a set $E$ of $\bS$-relative judgments.
\end{enumerate}
\end{definition}

\begin{definition}
    Let $\Omega$ be an $\bS$-relative signature.
    An ($\bS$-relative) \emph{$\Omega$-algebra} $\bA$ consists of:
    \begin{itemize}
        \item a partial $\bS$-model $A$,
        \item for each $\omega\in\Omega$, a map $\intpn{\omega}{\bA}: \intpn{\ar(\omega)}{A}\to A_{\type(\omega)}$.
    \end{itemize}
\end{definition}

An $\Omega$-algebra $\bA$ can be regarded as a partial $(\Sigma+\Omega)$-structure by considering $\intpn{\omega}{\bA}$ as a partial map $A_{s_1}\times\dots\times A_{s_n}\pto A_{\type(\omega)}$, where $\ar(\omega)$ is in the context $x_1{:}s_1,\dots,x_n{:}s_n$.
Conversely, a partial $(\Sigma+\Omega)$-structure satisfying all sequents in $\bS$ and the bisequent $\omega(\tup{x})\defined\biseq{\tup{x}}\ar(\omega)$ for each $\omega\in\Omega$ can be regarded as an $\Omega$-algebra.

\begin{definition}
    Let $\Omega$ be an $\bS$-relative signature.
    We say an $\Omega$-algebra $\bA$ satisfies an $\bS$-relative judgment if $\bA$ satisfies it as a $(\Sigma+\Omega)$-structure.
\end{definition}

\begin{notation}\quad
\begin{enumerate}
    \item
    Given an $\bS$-relative signature $\Omega$, we will denote by $\Alg\Omega$ the full subcategory of $(\Sigma+\Omega)\-\PStr$ consisting of all $\Omega$-algebras.
    \item
    Given an $\bS$-relative algebraic theory $(\Omega,E)$, we will denote by $\Alg(\Omega,E)$ the full subcategory of $\Alg\Omega$ consisting of all algebras satisfying all $\bS$-relative judgments in $E$.
    An $\Omega$-algebra belonging to $\Alg(\Omega,E)$ is called an ($\bS$-relative) \emph{$(\Omega,E)$-algebra} or a \emph{model} of $(\Omega,E)$.
\end{enumerate}
\end{notation}

Let $(\Omega,E)$ be an $\bS$-relative algebraic theory.
Considering the following partial Horn theory over $\Sigma+\Omega$
\begin{equation}\label{eq:PHT_for_OmegaE}
    \pht{\Omega}{E}:=\bS\cup\{\omega(\tup{x})\defined\biseq{\tup{x}}\ar(\omega)\}_{\omega\in\Omega}\cup E,
\end{equation}
we have $\Alg(\Omega,E)\cong\pht{\Omega}{E}\-\PMod$.
Thus, the category $\Alg(\Omega,E)$ of relative algebras is the category of models of a partial Horn theory.
In particular, $\Alg(\Omega,E)$ is locally finitely presentable.

\begin{example}
    Let $\bS_\pos$ be the partial Horn theory for posets.
    We present an $\bS_\pos$-relative algebraic theory $(\Omega,E)$.
    \begin{gather*}
        \Omega:=\{-\},\quad \ar(-):=(x,y).y\le x, \\
        E:=\{\, x\le y \seq{x,y,z}(x-z)\le(y-z),\quad y\le z \seq{x,y,z}(x-z)\le(x-y) \,\}.
    \end{gather*}
    Then an $(\Omega,E)$-algebra is just a ``poset with subtraction.''
    For example, $\bN$ with usual subtraction is an $(\Omega,E)$-algebra.
\end{example}

\begin{example}
    The partial Horn theory $\bS_\quiv$ for quivers is given by:
    \begin{gather*}
        S_\quiv:=\{\mathrm{e},\mathrm{v}\},\quad
        \Sigma_\quiv:=\{ \mathrm{s},\mathrm{t}:\mathrm{e}\to\mathrm{v} \},\quad
        \bS_\quiv:=\{ \top\seq{f:\mathrm{e}}\mathrm{s}(f)\defined\wedge\mathrm{t}(f)\defined \}.
    \end{gather*}
    We define an $\bS_\quiv$-relative algebraic theory $(\Omega,E)$ as follows:
    \begin{gather*}
        \Omega:=\{ \circ,\id \};\\
        \ar(\circ):=(g,f{:}\mathrm{e}).\mathrm{s}(g)=\mathrm{t}(f),\quad \type(\circ):=\mathrm{e};\\
        \ar(\id):=(x{:}\mathrm{v}).\top,\quad \type(\id):=\mathrm{e};
    \end{gather*}
    \begin{equation*}
        E:=\left\{
        \begin{gathered}
            \top\seq{x:\mathrm{v}}\mathrm{s}(\id(x))=x\wedge\mathrm{t}(\id(x))=x,\\
            \mathrm{s}(g)=\mathrm{t}(f)\seq{g,f:\mathrm{e}}\mathrm{s}(g\circ f)=\mathrm{s}(f)\wedge\mathrm{t}(g\circ f)=\mathrm{t}(g),\\
            \top\seq{f:\mathrm{e}}f\circ\id(\mathrm{s}(f))=f\wedge \id(\mathrm{t}(f))\circ f=f,\\
            \mathrm{s}(h)=\mathrm{t}(g)\wedge\mathrm{s}(g)=\mathrm{t}(f)\seq{h,g,f:\mathrm{e}}(h\circ g)\circ f=h\circ (g\circ f)
        \end{gathered}
        \right\}
    \end{equation*}
    Then, we have $\Alg(\Omega,E)\simeq\Cat$.
\end{example}

\begin{example}
    The partial Horn theory $\bS_\rsrel$ for reflexive symmetric relations is given by:
    \begin{gather*}
        S_\rsrel:=\{ * \},\quad
        \Sigma_\rsrel:=\{ \odot: *\times * \},\quad
        \bS_\rsrel:=\{ \top\seq{x} x\odot x,\quad x\odot y\seq{x,y}y\odot x \}.
    \end{gather*}
    We define an $\bS_\rsrel$-relative algebraic theory $(\Omega,E)$ as follows:
    \begin{gather*}
        \Omega:=\{ 0,1,\neg,\vee,\wedge \};\\
        \ar(0)=\ar(1):=().\top,\quad \ar(\neg):=x.\top,\quad \ar(\vee)=\ar(\wedge):=(x,y).x\odot y;
    \end{gather*}
    \begin{equation*}
        E:=\left\{
        \begin{gathered}
            \top \seq{x} x\odot 0,~x\odot 1;\quad\quad
            x\odot y \seq{x,y} x\odot\neg y;\\
            x\odot y,~y\odot z,~z\odot x \seq{x,y,z} x\odot (y\vee z),~x\odot (y\wedge z);\\
            x\odot y,~y\odot z,~z\odot x \seq{x,y,z} (x\vee y)\vee z=x\vee (y\vee z),~(x\wedge y)\wedge z=x\wedge (y\wedge z);\\
            x\odot y \seq{x,y} x\vee y=y\vee x,~x\wedge y=y\wedge x;\\
            x\odot y \seq{x,y} (x\wedge y)\vee x=x,~x\wedge (y\vee x)=x;\\
            \top \seq{x} x\vee 0=x,~x\wedge 1=x,~x\vee\neg x=1,~x\wedge\neg x=0;\\
            x\odot y,~y\odot z,~z\odot x \seq{x,y,z} (x\wedge y)\vee z=(x\vee z)\wedge (x\vee z);\\
            x\odot y,~y\odot z,~z\odot x \seq{x,y,z} (x\vee y)\wedge z=(x\wedge z)\vee (y\wedge z)
        \end{gathered}
        \right\}
    \end{equation*}
    In the above, we use the symbol (,) instead of $\wedge$ to avoid confusion.
    An algebra of $(\Omega,E)$ is a Boolean algebra-like algebra whose conjunction and disjunction are partial, which is called a \emph{partial Boolean algebra} in \cite{berg2012noncomm}.
    There, the reflexive symmetric relation $\odot$ is called \emph{commeasurability}.
\end{example}

%-----------------------------------------------------------------------------
\subsection{Morphisms of relative algebraic theories}
%-----------------------------------------------------------------------------

In this subsection, we define the category of $\bS$-relative algebraic theories.
A morphism defined here is a special case of a looser version of a theory morphism between partial Horn theories in \cite[Definition 9.10]{palmgren2007partial}.

\begin{definition}
    Let $(\Omega,E)$ and $(\Omega',E')$ be $\bS$-relative algebraic theories.
    \begin{enumerate}
        \item
        A \emph{theory morphism} $\rho:(\Omega,E)\to (\Omega',E')$ is an assignment to each operator $\omega\in\Omega$, a $\pht{\Omega'}{E'}$-term $\omega^\rho$ of type $\type(\omega)$ generated by $\ar(\omega)$ such that for every $(\phi\seq{\tup{x}}\psi)\in E$, $\phi\seq{\tup{x}}\psi^\rho$ is a PHL-theorem of $\pht{\Omega'}{E'}$.
        Here, $\psi^\rho$ is the $\rho$-translation, which is constructed by replacing all symbols that $\psi$ includes by $\rho$.
        %-----------------------------------------------------------------------------------
        \item
        Let $\rho,\sigma:(\Omega,E)\to (\Omega',E')$ be theory morphisms.
        We say that \emph{$\rho$ and $\sigma$ are equivalent} and write $\rho\sim\sigma$ if $\phi\seq{\tup{x}}\omega^\rho=\omega^\sigma$ is a PHL-theorem of $\pht{\Omega'}{E'}$ for every $\omega\in\Omega$ with $\ar(\omega)=\tup{x}.\phi$.
        %-----------------------------------------------------------------------------------
        \item
        Let $\rho:(\Omega,E)\to (\Omega',E')$ be a theory morphism.
        Given an algebra $\bA\in\Alg(\Omega',E')$, an algebra $\bA^\rho\in\Alg(\Omega,E)$ is defined by $\intpn{\omega}{\bA^\rho}:=\intpn{\omega^\rho}{\bA}$ for each $\omega\in\Omega$.
        Then, there exists a unique functor $\Alg\rho:\Alg(\Omega',E')\to\Alg(\Omega,E)$ such that $\bA\mapsto\bA^\rho$ and the following commutes:
        \begin{equation*}
            \begin{tikzcd}
                \Alg(\Omega,E)\arrow[rd,"U"'] & & \Alg(\Omega',E')\arrow[ll,"\Alg\rho"']\arrow[ld,"U'"] \\
                & \bS\-\PMod &
            \end{tikzcd}
        \end{equation*}
        Here, $U$ and $U'$ are forgetful functors.
        %-----------------------------------------------------------------------------------
    \end{enumerate}
\end{definition}

\begin{remark}\label{rem:equivalence_of_theory_morphism}
    Let $\rho,\sigma:(\Omega,E)\to (\Omega',E')$ be theory morphisms.
    Then, $\Alg\rho=\Alg\sigma$ if and only if $\rho\sim\sigma$.
\end{remark}

\begin{definition}\quad
    \begin{enumerate}
        \item
        We now define the category $\Th^\bS$ of $\bS$-relative algebraic theories:
        $\Th^\bS$ is the category whose object is an $\bS$-relative algebraic theory $(\Omega,E)$ and whose morphism from $(\Omega,E)$ to $(\Omega',E')$ is an equivalence class $[\rho]$ of a theory morphism $\rho:(\Omega,E)\to (\Omega',E')$.
        Given two morphisms $[\rho]:(\Omega,E)\to (\Omega',E')$ and $[\sigma]:(\Omega',E')\to (\Omega'',E'')$, the composition is an equivalence class of the theory morphism $\sigma\circ\rho$ which assigns to $\omega\in\Omega$, the $\pht{\Omega''}{E''}$-term $(\omega^\rho)^\sigma$.
        The identity morphism is an equivalence class of the identity theory morphism $\rho$ such that $\omega^\rho:=\omega$.
        %------------------------------------------------------------------------------------
        \item
        We can define the functor $\Alg:\Th^\bS\to (\CAT/\bS\-\PMod)^\op$ such that $(\Omega,E)\mapsto\Alg(\Omega,E)$ and $[\rho]\mapsto\Alg\rho$, where $\CAT$ is the category of (not necessarily small) categories and $\CAT/\bS\-\PMod$ is the slice category.
        This functor is well-defined by \cref{rem:equivalence_of_theory_morphism}.
    \end{enumerate}
\end{definition}

\begin{theorem}\label{thm:alg_fully_faithful}
    The functor $\Alg:\Th^\bS \to (\CAT/\bS\-\PMod)^\op$ is fully faithful.
\end{theorem}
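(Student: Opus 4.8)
The plan is to establish fullness and faithfulness separately, working with the functor $\Alg\colon\Th^\bS\to(\CAT/\bS\text-\PMod)^\op$. Faithfulness is essentially packaged already: a morphism in $\Th^\bS$ is an equivalence class $[\rho]$, and by \cref{rem:equivalence_of_theory_morphism} we have $\Alg\rho=\Alg\sigma$ if and only if $\rho\sim\sigma$, i.e.\ if and only if $[\rho]=[\sigma]$. So the hard work is fullness: given $\bS$-relative algebraic theories $(\Omega,E)$ and $(\Omega',E')$ and a functor $F\colon\Alg(\Omega',E')\to\Alg(\Omega,E)$ over $\bS\text-\PMod$ (a morphism in $\CAT/\bS\text-\PMod$, so $U\circ F=U'$), I must produce a theory morphism $\rho\colon(\Omega,E)\to(\Omega',E')$ with $\Alg\rho=F$.

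The key idea is to reconstruct $\rho$ from the free-algebra machinery via the representability established earlier. For each $\omega\in\Omega$ with $\ar(\omega)=\tup{x}.\phi$ and $\type(\omega)=s$, the operation $\intpn{\omega}{\bullet}$ is a natural transformation between interpretation functors on $\Alg(\Omega,E)\cong\pht{\Omega}{E}\text-\PMod$, and by \cref{prop:repn_obj_represents_intpn} these interpretation functors are representable by the models $\repn{\tup{x}.\phi}_{\pht{\Omega}{E}}$ and $\repn{x{:}s.\top}_{\pht{\Omega}{E}}$. The strategy is: first, transport the representing models along $F$. Because $F$ commutes with the forgetful functors to $\bS\text-\PMod$ and these forgetful functors detect the underlying $\bS$-model structure, I expect the free $(\Omega,E)$-algebra on a formula $\tup{x}.\phi$ to be sent by $F$ to an algebra whose underlying $\bS$-model is fixed; using naturality (the Yoneda-style correspondence of \cref{cor:morphism_between_fp}), the action of $F$ on each operation symbol $\omega$ picks out a $\pht{\Omega'}{E'}$-term $\omega^\rho$ of type $s$ generated by $\ar(\omega)$. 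Concretely, I would evaluate $F$ at the representing model $\repn{\ar(\omega)}_{\pht{\Omega'}{E'}}$, apply the universal element of \cref{lem:basic_property_repn_model}(iii), and read off the resulting term as $\omega^\rho$. Then I verify that the assignment $\omega\mapsto\omega^\rho$ is a genuine theory morphism, i.e.\ that each axiom in $E$ translates to a PHL-theorem of $\pht{\Omega'}{E'}$: this holds because $F$ lands in $\Alg(\Omega,E)$, so every $F(\bA)$ satisfies the $E$-axioms, and by \cref{prop:validity_for_PHL} validity of a sequent in all models is exactly orthogonality against the representing epimorphism—so the reconstructed translation is forced to be valid everywhere.

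Finally I check $\Alg\rho=F$. Since both are functors over $\bS\text-\PMod$ that agree on underlying $\bS$-models, it suffices to show they agree on the interpretation of each $\omega\in\Omega$; but $\intpn{\omega}{\Alg\rho(\bA)}=\intpn{\omega^\rho}{\bA}$ by definition, and $\omega^\rho$ was extracted precisely so that $\intpn{\omega^\rho}{\bA}=\intpn{\omega}{F(\bA)}$ for all $\bA$ via the representability correspondence, with naturality upgrading the pointwise agreement to an equality of functors.

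The main obstacle I anticipate is the second step: pinning down how an arbitrary functor $F$ over $\bS\text-\PMod$ acts on the representing models and confirming that the extracted data really is a $\pht{\Omega'}{E'}$-term (not merely an abstract natural transformation). The essential leverage is that $\repn{\ar(\omega)}_{\pht{\Omega'}{E'}}$ is finitely presentable by \cref{thm:repn_enumerates_fpobjects}, so its image under $F$ is again represented by a formula and the correspondence of \cref{cor:morphism_between_fp} makes the term $\omega^\rho$ explicit; the condition $U\circ F=U'$ is what guarantees the arity and type are preserved, so no term escapes the prescribed context.
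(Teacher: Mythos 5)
Your proposal is correct and follows essentially the same route as the paper: faithfulness is immediate from \cref{rem:equivalence_of_theory_morphism}, and fullness is obtained by evaluating the given functor at the representing model $\repn{\ar(\omega)}_{\pht{\Omega'}{E'}}$, reading off $\omega^\rho$ from the universal element $[\tup{x}]$, propagating the identity $\intpn{\omega}{F\bB}=\intpn{\omega^\rho}{\bB}$ to all $\bB$ by naturality and representability, and deducing validity of the translated axioms from the fact that $F$ lands in $\Alg(\Omega,E)$. (The only inessential wobble is your appeal to finite presentability of the representing model; what the argument actually uses is just $U\circ F=U'$, which forces the underlying $\bS$-model of $F(\repn{\ar(\omega)}_{\pht{\Omega'}{E'}})$ to be the term model, so the universal element lands in a set of term-classes.)
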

\begin{proof}
    By \cref{rem:equivalence_of_theory_morphism}, the functor $\Alg$ is faithful.
    To prove fullness, take a functor $K:\Alg(\Omega',E')\to\Alg(\Omega,E)$ which commutes with forgetful functors.
    Let $\omega\in\Omega$ with $\ar(\omega)=(x_1{:}s_1,\dots,x_n{:}s_n).\phi$ and $\type(\omega)=s$ and let $\bA:=\repn{\tup{x}.\phi}_\pht{\Omega'}{E'}\in\Alg(\Omega',E')$ and $A:=U'\bA$.
    Considering the interpretation $\intpn{\omega}{K\bA}:\intpn{\tup{x}.\phi}{A}\to A_s$, we get an equivalence class $\intpn{\omega}{K\bA}([\tup{x}]_\pht{\Omega'}{E'})\in A_s$ of a $\pht{\Omega'}{E'}$-term of type $s$ generated by $\tup{x}.\phi$ from the construction of $\repn{\tup{x}.\phi}_\pht{\Omega'}{E'}$ in \cref{def:representing_model}.
    We now define $\omega^\rho$ as a representative of the class $\intpn{\omega}{K\bA}([\tup{x}]_\pht{\Omega'}{E'})\in A_s$.

    We now show that the interpretation maps $\intpn{\omega}{K\bB},\intpn{\omega^\rho}{\bB}:\intpn{\tup{x}.\phi}{B}\to B_s$ coincide for every $\bB\in\Alg(\Omega',E')$.
    Write $B:=U'\bB$.
    Then, for every morphism $f:\bA\to\bB$ in $\Alg(\Omega',E')$, the following diagram is pairwise commutative:
    \begin{equation}\label{eq:pairwise_commutative_AB}
        \begin{tikzcd}
            A_{s_1}\times\dots\times A_{s_n}\arrow[d,"(U'f)_{s_1}\times\dots\times(U'f)_{s_n}"']\arrow[r,phantom,"\supseteq"] &[-10pt] \intpn{\tup{x}.\phi}{A}\arrow[d,dashed,"\intpn{\tup{x}.\phi}{U'f}"']\arrow[r,shift left=1,"\intpn{\omega}{K\bA}"]\arrow[r,shift right=1,"\intpn{\omega^\rho}{\bA}"'] & A_s\arrow[d,"(U'f)_s"] \\
            B_{s_1}\times\dots\times B_{s_n}\arrow[r,phantom,"\supseteq"] &[-10pt] \intpn{\tup{x}.\phi}{B}\arrow[r,shift left=1,"\intpn{\omega}{K\bB}"]\arrow[r,shift right=1,"\intpn{\omega^\rho}{\bB}"'] & B_s
        \end{tikzcd}\incat{\Set}
    \end{equation}
    By the definition of $\omega^\rho$, we have $\intpn{\omega}{K\bA}=\intpn{\omega^\rho}{\bA}$.
    By $\Alg(\Omega',E')(\bA,\bB)\cong\intpn{\tup{x}.\phi}{B}$, we see that every element of $\intpn{\tup{x}.\phi}{B}$ lies in the image of $\intpn{\tup{x}.\phi}{U'f}$ for some $f$.
    Thus, \cref{eq:pairwise_commutative_AB} implies $\intpn{\omega}{K\bB}=\intpn{\omega^\rho}{\bB}$.

    Consequently, we have $\bB^\rho=K\bB\in\Alg(\Omega,E)$ for every $\bB\in\Alg(\Omega',E')$, hence for every $(\phi\seq{\tup{x}}\psi)\in E$, $\phi\seq{\tup{x}}\psi^\rho$ is a PHL-theorem of $\pht{\Omega'}{E'}$.
    This yields a theory morphism $\rho:(\Omega,E)\to (\Omega',E')$ such that $K=\Alg\rho$.
\end{proof}

%-----------------------------------------------------------------------------
\subsection{From relative algebraic theories to finitary monads}\label{subsection3.2}
%-----------------------------------------------------------------------------
\begin{definition}
    A functor (or monad) is \emph{finitary} if it preserves filtered colimits.
\end{definition}

Our goal in this subsection is to prove that the category of models of an $\bS$-relative algebraic theory is (strictly) finitary monadic over $\bS\-\PMod$.
This is one direction of our main theorem (\cref{thm:equiv_between_monad_and_rat}).

\begin{notation}
    Given an endofunctor $H:\C\to\C$, we will denote by $\Alg H$ the inserter from $H$ to $\Id_\C$, i.e., 
    $\Alg H$ is the category whose object is a pair $(X,x)$ of:
    \begin{itemize}
        \item
        an object $X\in\C$ and
        \item
        a morphism $x:H(X)\to X$ in $\C$,
    \end{itemize}
    and whose morphism $f:(X,x)\to (Y,y)$ is a morphism $f:X\to Y$ in $\C$ such that the following commutes:
    \begin{equation*}
        \begin{tikzcd}
            H(X)\arrow[d,"x"']\arrow[r,"H(f)"] & H(Y)\arrow[d,"y"] \\
            X\arrow[r,"f"'] & Y
        \end{tikzcd}\incat{\C}.
    \end{equation*}   
\end{notation}

\begin{definition}\label{def:endofunc_for_signature}
    Given an $\bS$-relative signature $\Omega$, define a finitary endofunctor $H_\Omega:\bS\-\PMod\to\bS\-\PMod$ by the following:
    \begin{equation*}
        H_\Omega(A):=\coprod_{\omega\in\Omega}\intpn{\ar(\omega)}{A}\bullet\repn{x{:}\type(\omega).\top}_\bS.
    \end{equation*}
    Here, $\intpn{\ar(\omega)}{A}\bullet\repn{x{:}\type(\omega).\top}_\bS$ is the copower of $\repn{x{:}\type(\omega).\top}_\bS$ by the set $\intpn{\ar(\omega)}{A}$.
\end{definition}

\begin{lemma}\label{lem:iso_alg_alg}
    For every $\bS$-relative signature $\Omega$, there exists an isomorphism $\Alg H_\Omega\cong\Alg\Omega$ of categories that commutes with forgetful functors.
\end{lemma}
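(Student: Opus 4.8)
The plan is to exhibit the isomorphism on the nose, by decomposing a structure map $a\colon H_\Omega(A)\to A$ into its components and recognizing each component as an operation $\intpn{\omega}{\bA}$. First I would fix a partial $\bS$-model $A$ and analyze the hom-set $\bS\-\PMod(H_\Omega(A),A)$. Since $H_\Omega(A)=\coprod_{\omega\in\Omega}\intpn{\ar(\omega)}{A}\bullet\repn{x{:}\type(\omega).\top}_\bS$ is built from coproducts and copowers, their universal properties yield a bijection
\[
\bS\-\PMod(H_\Omega(A),A)\cong\prod_{\omega\in\Omega}\Set\bigl(\intpn{\ar(\omega)}{A},\ \bS\-\PMod(\repn{x{:}\type(\omega).\top}_\bS,A)\bigr).
\]
By \cref{prop:repn_obj_represents_intpn} together with $\intpn{x{:}\type(\omega).\top}{A}=A_{\type(\omega)}$, the inner hom-set is naturally $A_{\type(\omega)}$, so the right-hand side is exactly the set of families $\bigl(\intpn{\omega}{\bA}\colon\intpn{\ar(\omega)}{A}\to A_{\type(\omega)}\bigr)_{\omega\in\Omega}$. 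This identifies objects of $\Alg H_\Omega$ lying over $A$ with $\Omega$-algebra structures on $A$, and since the correspondence fixes the underlying model $A$, it commutes with the forgetful functors to $\bS\-\PMod$.

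Next I would treat morphisms. A morphism $(A,a)\to(B,b)$ in $\Alg H_\Omega$ is an $f\colon A\to B$ in $\bS\-\PMod$ with $b\circ H_\Omega(f)=f\circ a$, whereas an $\Omega$-algebra homomorphism is an $f\colon A\to B$ for which, for every $\omega$, the square relating $\intpn{\omega}{\bA}$ and $\intpn{\omega}{\bB}$ along $\intpn{\ar(\omega)}{f}$ and $f_{\type(\omega)}$ commutes. The task is to show these two conditions coincide. Because the identification above is natural in $A$, precomposing $a$ with $H_\Omega(f)$ corresponds componentwise to restricting each $\intpn{\omega}{\bA}$ along $\intpn{\ar(\omega)}{f}$ and postcomposing with $f_{\type(\omega)}$. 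Unwinding this naturality for each $\omega$ shows that $b\circ H_\Omega(f)=f\circ a$ holds if and only if all the operation squares commute, i.e.\ $f$ is an $\Omega$-homomorphism.

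Finally I would package these bijections into a functor and its inverse. The assignment is the identity on underlying objects and morphisms of $\bS\-\PMod$, so functoriality and commutation with the forgetful functors are immediate once the object- and morphism-level bijections are established; this yields the desired isomorphism $\Alg H_\Omega\cong\Alg\Omega$ over $\bS\-\PMod$.

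I expect the only genuine obstacle to be the bookkeeping of naturality in the morphism step: one must check that the chain of canonical isomorphisms (the copower adjunction, the coproduct universal property, and the representing-model isomorphism of \cref{prop:repn_obj_represents_intpn}) is natural in $A$ in the precise sense needed so that the single inserter square decomposes into the family of operation squares indexed by $\omega\in\Omega$. Everything else is a direct application of universal properties.
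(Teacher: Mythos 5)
Your proposal is correct and follows essentially the same route as the paper's proof: the paper establishes exactly the chain of bijections $\bS\-\PMod(H_\Omega(A),A)\cong\prod_{\omega}\Set(\intpn{\ar(\omega)}{A},\bS\-\PMod(\repn{x{:}\type(\omega).\top}_\bS,A))\cong\prod_{\omega}\Set(\intpn{\ar(\omega)}{A},A_{\type(\omega)})$ via the coproduct, copower, and representing-model isomorphisms, and then concludes. Your additional explicit check that naturality makes the inserter square decompose into the per-operator squares is exactly the step the paper leaves implicit.
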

\begin{proof}
    For each $A\in\bS\-\PMod$, the following data correspond bijectively:
    \begin{center}
    \renewcommand{\arraystretch}{1.3}
    \begin{tabular}{c}
        $H_\Omega(A)\longarr{}A\incat{\bS\-\PMod}$
        \\
        \hline\hline
        $\intpn{\ar(\omega)}{A}\bullet\repn{x{:}\type(\omega).\top}_\bS \longarr{} A\incat{\bS\-\PMod}\quad(\omega\in\Omega)$
        \\
        \hline\hline
        $\intpn{\ar(\omega)}{A}\longarr{}\bS\-\PMod( \repn{x{:}\type(\omega).\top}_\bS , A )\incat{\Set}\quad(\omega\in\Omega)$
        \\
        \hline\hline
        $\intpn{\ar(\omega)}{A}\longarr{}A_{\type(\omega)}\incat{\Set}\quad(\omega\in\Omega)$
    \end{tabular}
    \renewcommand{\arraystretch}{1}
    \end{center}
    This gives a desired isomorphism of categories.
\end{proof}

\begin{lemma}\label{lem:finitary_endofunctor_monadic_adjoint}
    Let $H:\C\to\C$ be a finitary endofunctor on a cocomplete locally small category $\C$.
    Then the forgetful functor $U:\Alg H\to\C$ has a left adjoint, and this adjunction is strictly finitary monadic, i.e., the induced monad is finitary and the Eilenberg-Moore comparison functor is an isomorphism.
\end{lemma}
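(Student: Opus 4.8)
The plan is to construct the free $H$-algebra functor explicitly, deduce strict monadicity from the creation form of Beck's theorem, and then read off finitariness formally.

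First I would build the left adjoint. For an object $C\in\C$, giving a free $H$-algebra on $C$ is the same as giving an initial algebra for the endofunctor $H_C:=C+H(-)$, since an $H_C$-algebra is exactly an $H$-algebra equipped with a map out of $C$. The functor $C+(-)$ preserves connected colimits---in particular colimits of $\omega$-chains---and $H$ preserves filtered colimits by hypothesis, so $H_C$ preserves $\omega$-chains. Applying Ad\'amek's initial-algebra construction, the initial $H_C$-algebra exists and is the colimit
\[
FC:=\Colim{n<\omega}H_C^{n}(0)
\]
of the chain $0\to H_C(0)\to H_C^{2}(0)\to\cdots$ out of the initial object $0\in\C$ (which exists since $\C$ is cocomplete). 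Its structure map decomposes into a universal arrow $C\to FC$ together with an $H$-algebra structure on $FC$, and checking the universal property shows $C\mapsto FC$ extends to a left adjoint $F\dashv U$.

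Next I would prove that $U:\Alg H\to\C$ creates coequalizers of $U$-split pairs. Let $f,g:(A,a)\rightrightarrows(B,b)$ be a pair in $\Alg H$ whose image under $U$ admits a split coequalizer $q:B\to Q$ in $\C$. A split coequalizer is absolute, so $H$ preserves it; hence $H(q)$ is the coequalizer of $H(f),H(g)$ and in particular an epimorphism. Since $f,g$ are algebra morphisms and $qf=qg$, the arrow $q\circ b$ coequalizes $H(f)$ and $H(g)$, so there is a unique $c:H(Q)\to Q$ with $c\circ H(q)=q\circ b$; this is the unique $H$-algebra structure on $Q$ rendering $q$ a morphism of algebras, and epi-ness of $H(q)$ makes $(Q,c)$ the coequalizer in $\Alg H$ lifting the given one. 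Thus $U$ creates these coequalizers, and as $U$ manifestly reflects isomorphisms and has the left adjoint $F$, the strict form of Beck's monadicity theorem applies: the Eilenberg--Moore comparison functor $\Alg H\to\C^{T}$ is an \emph{isomorphism}, where $T:=UF$.

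Finally, an entirely analogous assembling argument shows that $U$ creates every colimit that $H$ preserves: for a diagram in $\Alg H$ whose underlying diagram has a colimiting cocone in $\C$ preserved by $H$, the structure maps glue uniquely into an $H$-algebra structure on the colimit. As $H$ is finitary, $U$ therefore creates---hence preserves---filtered colimits, while $F$, being a left adjoint, preserves all colimits; consequently $T=UF$ preserves filtered colimits, i.e.\ the induced monad is finitary. The step demanding the most care is obtaining an \emph{isomorphism} rather than a mere equivalence in the monadicity step: this is exactly why one must verify \emph{creation} (a unique lift), and not only preservation and reflection, of coequalizers of $U$-split pairs---the point at which absoluteness of split coequalizers, and the resulting epi-ness of $H(q)$, are essential. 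By contrast, the construction of $F$ is routine once finitariness of $H$ lets the initial-algebra chain terminate at stage $\omega$.
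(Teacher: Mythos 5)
Your proposal is correct and follows essentially the same route as the paper: the free algebra is obtained as the colimit of the $\omega$-chain for $C+H(-)$ (the paper builds the chain $K_0=X$, $K_{n+1}=HK_n+X$ directly rather than citing Ad\'amek's initial-algebra theorem, but it is the same construction), and strict monadicity plus finitariness follow from Beck's strict monadicity theorem via creation of absolute coequalizers and of filtered colimits. Your write-up merely supplies more detail than the paper at the split-coequalizer step.
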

\begin{proof}
    We first construct a left adjoint of $U$.
    Fix $X\in\C$.
    Define objects $K_n\in\C$ and morphisms $k_n:K_n\to K_{n+1}$ in $\C$ inductively:
    \begin{itemize}
        \item $K_0:=X$,
        \item $K_{n+1}:=HK_n+X$,
        \item define $k_0:K_0=X\to HX+X=K_1$ to be the coprojection, and
        \item define $k_{n+1}:K_{n+1}=HK_n+X\to HK_{n+1}+X=K_{n+2}$ to be $Hk_n+\id$.
    \end{itemize}
    The above construction yields an $\bbomega$-indexed diagram $(K_n)_{n\in\bbomega}$ in $\C$.
    Since the diagram
    \begin{equation*}
        \begin{tikzcd}
            &[-20pt] HK_n\arrow[d,"\gamma_n"']\arrow[r,"Hk_n"] & HK_{n+1}\arrow[d,"\gamma_{n+1}"] &[-20pt] \\
            K_{n+1}\arrow[r,equal] & HK_n+X\arrow[r,"Hk_n+\id"'] & HK_{n+1}+X\arrow[r,equal] & K_{n+2}
        \end{tikzcd}
    \end{equation*}
    commutes, where $\gamma_n$ and $\gamma_{n+1}$ are coprojections, we have a canonical morphism 
    \begin{equation*}
        \gamma:H(\Colim{n\in\bbomega}K_n)\cong\Colim{n\in\bbomega}HK_n\longarr{\mathrm{Colim}\gamma_n}\Colim{n\in\bbomega}K_{n+1}.
    \end{equation*}
    Now we define $FX:=(K_\omega,\gamma)\in\Alg H$, where $K_\omega:=\Colim{n\in\bbomega}K_n$, and define $\eta_X:X\to K_\omega$ to be the zeroth coprojection.
    It is easy to check that $FX$ and $\eta_X$ yield a left adjoint $F$ of $U$ and its unit $\eta$.
    
    It remains to prove that $F\dashv U$ is finitary monadic.
    Since $H$ is finitary, $U$ creates filtered colimits.
    It is clear that $U$ strictly creates absolute coequalizers.
    Therefore, by Beck's strict monadicity theorem (\cite{maclane1998working} VI.7 Theorem 1), we have proved that $F\dashv U$ is strictly monadic.
\end{proof}

Combining \cref{lem:iso_alg_alg,lem:finitary_endofunctor_monadic_adjoint}, we obtain the following corollary.

\begin{corollary}\label{cor:alg_omega_monadic}
    Let $\Omega$ be an $\bS$-relative signature.
    Then $\Alg\Omega$ is a strictly finitary monadic category over $\bS\-\PMod$.
\end{corollary}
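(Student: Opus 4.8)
The plan is to obtain the statement by feeding the finitary endofunctor $H_\Omega$ from \cref{def:endofunc_for_signature} into \cref{lem:finitary_endofunctor_monadic_adjoint}, and then transporting the resulting monadicity across the isomorphism supplied by \cref{lem:iso_alg_alg}. No new ideas are needed beyond these two lemmas; the work is entirely in verifying hypotheses and checking invariance under the relevant isomorphism.

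First I would check that the hypotheses of \cref{lem:finitary_endofunctor_monadic_adjoint} hold for $\C:=\bS\-\PMod$ and $H:=H_\Omega$. By \cref{thm:PHT_characterize_lfp} the category $\bS\-\PMod$ is locally finitely presentable, hence in particular cocomplete and locally small, so $\C$ meets the requirements of the lemma. The endofunctor $H_\Omega$ is finitary, as recorded in \cref{def:endofunc_for_signature}: it is a coproduct of functors $A\mapsto\intpn{\ar(\omega)}{A}\bullet\repn{x{:}\type(\omega).\top}_\bS$, and each of these preserves filtered colimits since the interpretation functor $\intpn{\ar(\omega)}{\bullet}$ does so (as observed in the proof of \cref{thm:repn_enumerates_fpobjects}) and copowers commute with filtered colimits in the indexing set. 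Applying \cref{lem:finitary_endofunctor_monadic_adjoint} therefore shows that the forgetful functor $U\colon\Alg H_\Omega\to\bS\-\PMod$ is strictly finitary monadic.

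Next I would invoke \cref{lem:iso_alg_alg}, which gives an isomorphism $\Alg\Omega\cong\Alg H_\Omega$ of categories commuting with the forgetful functors to $\bS\-\PMod$. Along such an isomorphism over the base, all the relevant structure transports verbatim: the left adjoint of the forgetful functor, the monad it induces on $\bS\-\PMod$ (which is literally the same monad, hence again finitary), and the Eilenberg--Moore comparison functor. In particular the comparison functor for $\Alg\Omega$ is again an isomorphism, so $\Alg\Omega$ is strictly finitary monadic over $\bS\-\PMod$, as claimed.

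The sole point deserving care — and the only potential obstacle, though a minor one — is the observation that \emph{being strictly finitary monadic} is invariant under an isomorphism of categories that commutes with the forgetful functors. This is immediate: a commuting isomorphism identifies the two forgetful functors up to relabelling of objects, so the adjunction, the monad it generates, and the comparison functor are all carried across unchanged, and the conclusion for $\Alg\Omega$ follows from that for $\Alg H_\Omega$.
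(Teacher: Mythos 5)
Your proposal is correct and matches the paper's own argument, which derives the corollary by combining exactly \cref{lem:iso_alg_alg} and \cref{lem:finitary_endofunctor_monadic_adjoint}. The additional checks you spell out (local finite presentability of $\bS\-\PMod$, finitariness of $H_\Omega$, and transport of strict monadicity along an isomorphism over the base) are the routine verifications the paper leaves implicit.
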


\vspace{10em}
\begin{definition}
    Let $\C$ be a category.
    \begin{enumerate}
        \item
        An object $C\in\C$ is \emph{orthogonal} to a morphism $f:X\to Y$ in $\C$ if for every morphism $g:X\to C$, there exists a unique $\hat{g}:Y\to C$ satisfying $\hat{g}f=g$.
        \begin{equation*}
            \begin{tikzcd}[column sep=large, row sep=large]
                X\arrow[d,"f"']\arrow[r,"g"] & C \\
                Y\arrow[ru,"\exists !\hat{g}"',dashed] &
            \end{tikzcd}
        \end{equation*}
        \item
        Given a class $\Lambda\subseteq\mor\C$ of morphisms, denote by $\orth{\Lambda}\subseteq\C$ the full subcategory consisting of all objects orthogonal to all morphisms in $\Lambda$.
        The full subcategory $\orth{\Lambda}$ is called an \emph{orthogonality class} of $\C$, and called a \emph{small-orthogonality class} if $\Lambda$ is (essentially) small.
    \end{enumerate}
\end{definition}

\begin{lemma}\label{lem:orth_closed_under_filcolim}
    Let $\C$ be a category with filtered colimits and let $\Lambda\subseteq\mor\C$ be a class of epimorphisms with finitely presentable domain.
    Then the orthogonality class $\orth{\Lambda}\subseteq\C$ is closed under filtered colimits.
\end{lemma}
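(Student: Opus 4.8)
The plan is to verify the orthogonality condition for the colimit directly, exploiting the two hypotheses on $\Lambda$ separately: the epimorphism hypothesis will handle uniqueness, and the finite-presentability hypothesis will handle existence. Fix a morphism $f\colon X\to Y$ in $\Lambda$, so that $X$ is finitely presentable and $f$ is epic, and let $(C_i)_{i\in I}$ be a filtered diagram of objects of $\orth{\Lambda}$ with colimit $C:=\Colim{i\in I}C_i$ and coprojections $c_i\colon C_i\to C$. I must show $C$ is orthogonal to $f$.

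For the uniqueness half of orthogonality, I would simply observe that since $f$ is an epimorphism, any two fillers $\hat g,\hat g'\colon Y\to C$ with $\hat g f=\hat g' f$ automatically coincide. So uniqueness is free, and the entire content of the lemma lies in establishing \emph{existence} of a filler.

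For existence, take an arbitrary $g\colon X\to C$. Since $X$ is finitely presentable, the hom-functor $\C(X,-)$ preserves the filtered colimit $C=\Colim{i\in I}C_i$, and hence $g$ factors through some coprojection: there are an index $i\in I$ and a morphism $g_i\colon X\to C_i$ with $g=c_i\circ g_i$. Because $C_i\in\orth{\Lambda}$, orthogonality of $C_i$ to $f$ supplies a (unique) $\hat g_i\colon Y\to C_i$ with $\hat g_i f=g_i$. Setting $\hat g:=c_i\circ\hat g_i$ then gives $\hat g f=c_i(\hat g_i f)=c_i g_i=g$, as required. As $f\in\Lambda$ was arbitrary, this shows $C\in\orth{\Lambda}$.

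The only genuine step is the factorization of $g$ through a single stage $C_i$, which is precisely the finite-presentability hypothesis on $X$; everything else is formal. I do not anticipate a real obstacle, since the hypotheses are tailored to avoid any coherence or amalgamation argument: the epimorphism condition is exactly what makes the filler unique \emph{without} needing to compare the chosen factorization across different indices of the diagram, so no confluence step (passing to a common upper bound in $I$) is needed.
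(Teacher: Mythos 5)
Your proof is correct and follows essentially the same route as the paper's: factor $g$ through a stage $C_I$ using finite presentability of $X$, lift there by orthogonality of $C_I$, compose with the coprojection, and get uniqueness for free from $f$ being an epimorphism. Your write-up is slightly more explicit than the paper's about why no confluence argument across indices is needed, but the mathematical content is identical.
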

\begin{proof}
    Let $C=\Colim{I\in\bI}C_I$ be a filtered colimit in $\C$ and assume $C_I\in\orth{\Lambda}$ for every $I\in\bI$.
    To prove $C\in\orth{\Lambda}$, take a morphism $f:X\to Y$ belonging to $\Lambda$ and a morphism $g:X\to C$.
    Since $X$ is finitely presentable, we have the expression $g:X\arr{g'}C_I\to C$ for some $I\in\bI$ and $g'$.
    By $C_I\in\orth{\Lambda}$, there exists a unique $h:Y\to C_I$ satisfying $hf=g'$, which implies that $C$ is orthogonal to $f$.
    \begin{equation*}
        \begin{tikzcd}[column sep=huge, row sep=huge]
            X\arrow[d,"f"']\arrow[r,"g"]\arrow[rd,"g'",dashed]  &  C  \\
            Y\arrow[r,"\exists !h",dashed]  &  C_I\arrow[u]
        \end{tikzcd}\incat{\C}
    \end{equation*}
\end{proof}

\begin{theorem}[{\cite[Theorem 5.4.7]{borceux1994handbook1}}]\label{thm:orth_sub_problem}
    Every small-orthogonality class of a locally presentable category is reflective, i.e., 
    the inclusion $\orth{\Lambda}\hookrightarrow\A$ has a left adjoint if $\A$ is locally presentable and $\Lambda\subseteq\mor\A$ is (essentially) small.
\end{theorem}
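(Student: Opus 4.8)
The plan is to construct the reflection by hand via a transfinite, small-object-style argument rather than by invoking a general adjoint functor theorem. The first move is to repackage orthogonality as a pure existence (injectivity) condition, so that a single pushout construction can do all the work. For each $f:X\to Y$ in $\Lambda$, form the pushout $Y\sqcup_X Y$ of $f$ along itself and let $\nabla_f:Y\sqcup_X Y\to Y$ be the induced codiagonal. A direct check shows that an object $C$ satisfies the \emph{uniqueness} half of orthogonality to $f$ precisely when every map $Y\sqcup_X Y\to C$ factors (necessarily uniquely) through $\nabla_f$, i.e. when $C$ admits a lift against $\nabla_f$. Hence $C\in\orth{\Lambda}$ if and only if $C$ admits a (not necessarily unique) lift against every morphism in $\Lambda':=\Lambda\cup\{\nabla_f\mid f\in\Lambda\}$. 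Since finite colimits of $\lambda$-presentable objects are again $\lambda$-presentable, $\Lambda'$ is still small with presentable domains and codomains, so it suffices to produce a reflection onto the \emph{injectivity} class of $\Lambda'$.

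Next, because $\A$ is locally presentable and $\Lambda'$ is small, I would pick a regular cardinal $\lambda$ with $\A$ locally $\lambda$-presentable and every domain and codomain of a member of $\Lambda'$ being $\lambda$-presentable. Fixing $A\in\A$, I would then build a $\lambda$-chain $(A_\alpha)_{\alpha\le\lambda}$ by transfinite recursion: set $A_0:=A$, take colimits at limit ordinals, and at a successor $\alpha+1$ form the pushout
\begin{equation*}
    \begin{tikzcd}
        \coprod_{(f,g)} X_f \arrow[r]\arrow[d,"\coprod f"'] & A_\alpha \arrow[d] \\
        \coprod_{(f,g)} Y_f \arrow[r] & A_{\alpha+1}
    \end{tikzcd}
\end{equation*}
where the coproduct runs over all pairs $(f,g)$ with $f:X_f\to Y_f$ in $\Lambda'$ and $g:X_f\to A_\alpha$, and the top map is $g$ on each component. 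This freely adjoins, at every stage, a lift $Y_f\to A_{\alpha+1}$ for every wedge present at that stage.

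I would then verify that $RA:=A_\lambda$ lies in the injectivity class and that the zeroth coprojection $\eta_A:A\to RA$ is the reflection. For injectivity, given $f:X_f\to Y_f$ in $\Lambda'$ and $g:X_f\to A_\lambda$, the $\lambda$-presentability of $X_f$ together with the regularity of $\lambda$ (which makes the chain $\lambda$-filtered) forces $g$ to factor through some $A_\alpha$ with $\alpha<\lambda$; the pushout at stage $\alpha+1$ then supplies a lift $Y_f\to A_{\alpha+1}\to A_\lambda$. Composing with the reduction of the first paragraph gives $RA\in\orth{\Lambda}$. For the universal property, fix $C\in\orth{\Lambda}$ and apply $\A(-,C)$ to each pushout square: orthogonality makes $\A(\coprod Y_f,C)\to\A(\coprod X_f,C)$ a bijection, and since $\A(-,C)$ sends pushouts to pullbacks, each transition $\A(A_{\alpha+1},C)\to\A(A_\alpha,C)$ is a bijection. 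Passing through the limit stages by continuity of $\A(-,C)$ along the chain yields $\A(RA,C)\cong\A(A,C)$ naturally in $C$, so $\eta_A$ exhibits $RA$ as the reflection and the inclusion $\orth{\Lambda}\hookrightarrow\A$ has a left adjoint.

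The main obstacle is twofold. The conceptual one is the reduction of the uniqueness clause of orthogonality to an injectivity condition via the codiagonals $\nabla_f$; this is what collapses the construction to a single uniform pushout recursion and avoids separately coequalizing maps. The technical one is the bound argument ensuring that $\lambda$ steps suffice: everything hinges on choosing $\lambda$ large enough that all domains in $\Lambda'$ are $\lambda$-presentable, so that any test map into $A_\lambda$ lands in a bounded stage and the lift adjoined there persists. By contrast, the verification that homs into orthogonal objects are preserved is routine once the pushout squares are recognized as $\mathrm{Hom}$-pullbacks.
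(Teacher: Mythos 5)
The paper does not prove this statement at all: it is imported verbatim as \cite[Theorem 5.4.7]{borceux1994handbook1}, so there is no in-paper argument to compare yours against. Your proposal is a correct, self-contained rendition of the standard proof of that cited result (the transfinite small-object construction, as in Borceux or in Ad\'amek--Rosick\'y 1.37--1.38). The two load-bearing steps both check out. First, the reduction of orthogonality to injectivity is sound: a morphism $Y\sqcup_X Y\to C$ is exactly a pair $h_1,h_2:Y\to C$ with $h_1f=h_2f$, and it factors through the codiagonal $\nabla_f$ (necessarily uniquely, $\nabla_f$ being split epi) precisely when $h_1=h_2$; so injectivity with respect to $\Lambda\cup\{\nabla_f\}_{f\in\Lambda}$ is equivalent to membership in $\orth{\Lambda}$, and this is what lets a single pass of length $\lambda$ suffice, where a naive orthogonality iteration would have to interleave existence and uniqueness corrections. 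Second, the bound argument is handled correctly: since $\Lambda$ is small and every object of a locally presentable category is $\mu$-presentable for some $\mu$, a regular $\lambda$ exists with $\A$ locally $\lambda$-presentable and all (co)domains in $\Lambda'$ (including the finite colimits $Y\sqcup_X Y$) $\lambda$-presentable, so any test map into $A_\lambda$ factors through a stage $\alpha<\lambda$ and the lift adjoined at $\alpha+1$ persists. The verification of the universal property via $\A(-,C)$ turning the pushouts into pullbacks of bijections is likewise correct. The only cosmetic quibble is the ordering of quantifiers in ``finite colimits of $\lambda$-presentable objects are $\lambda$-presentable, so $\Lambda'$ is still small with presentable domains'': one should fix $\lambda$ after forming $\Lambda'$ (or note that $\lambda$-presentability of $X$ and $Y$ already forces it for $Y\sqcup_X Y$), but this does not affect the argument.
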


\begin{remark}\label{rem:alg_omegaE_orthogonal}
    Let $(\Omega,E)$ be an $\bS$-relative algebraic theory and let $\A:=\bS\-\PMod$.
    By \cref{cor:alg_omega_monadic}, we have the adjunction
    \begin{equation*}
    \begin{tikzcd}[column sep=large, row sep=large]
        \A\arrow[r,"\bF",shift left=7pt]\arrow[r,"\perp"pos=0.5,phantom] &[10pt]\Alg\Omega\arrow[l,"U",shift left=7pt]
    \end{tikzcd}
    \end{equation*}
    and $U$ preserves filtered colimits.
    Let $E=\{\phi_i\seq{\tup{x}_i}\psi_i\}_{i\in I}$.
    Since
    \begin{equation*}
        \Alg\Omega(\bF\repn{\tup{x}_i.\phi_i}_\bS , \bA)
        \cong \A(\repn{\tup{x}_i.\phi_i}_\bS , U\bA)
        \cong \intpn{\tup{x}_i.\phi_i}{U\bA}
        = \intpn{\tup{x}_i.\phi_i}{\bA}
    \end{equation*}
    holds naturally, we can assume $\bF\repn{\tup{x}_i.\phi_i}_\bS = \repn{\tup{x}_i.\phi_i}$ in $\Alg\Omega$.
    Here, $\repn{\tup{x}_i.\phi_i}$ is the abbreviation for $\repn{\tup{x}_i.\phi_i}_{\pht{\Omega}{\varnothing}}$, where $\pht{\Omega}{\varnothing}$ is the partial Horn theory for $\Alg\Omega$ as in \cref{eq:PHT_for_OmegaE}.
    By \cref{prop:validity_for_PHL}, $\Alg(\Omega,E)\subseteq\Alg\Omega$ is the full subcategory of objects orthogonal to the following:
    \begin{equation*}
        \Lambda:=\{~\bF\repn{\tup{x}_i.\phi_i}_\bS \arr{\repn{\tup{x}_i}} \repn{\tup{x}_i.\psi_i}~\}_{i\in I}.
    \end{equation*}
    Thus, \cref{thm:orth_sub_problem} gives the following adjunction:
    \begin{equation*}
    \begin{tikzcd}[column sep=large, row sep=large]
        \Alg\Omega\arrow[r,"r",shift left=7pt]\arrow[r,"\perp"pos=0.5,phantom] &[10pt]\Alg(\Omega,E)~(=\orth{\Lambda}).\arrow[l,shift left=7pt,hook']
    \end{tikzcd}
    \end{equation*}
\end{remark}

\begin{definition}
    Let $U:\C\to\A$ be a functor.
    A morphism $f$ in $\C$ is a \emph{$U$-retraction} if $Uf$ is a retraction, i.e., there exists a morphism $s$ in $\A$ such that $(Uf)s=\id$.
    Given a $U$-retraction $f:X\to Y$, $Y$ is called a \emph{$U$-retract} of $X$.
\end{definition}

\begin{lemma}\label{lem:closed_under_localret_filteredcolim}
    Let $(\Omega,E)$ be an $\bS$-relative algebraic theory and let $\A:=\bS\-\PMod$.
    Consider the forgetful functor $U:\Alg\Omega\to\A$.
    Then $\Alg(\Omega,E)\subseteq\Alg\Omega$ is closed under:
    \begin{enumerate}
        \item $U$-retracts and
        \item filtered colimits.
    \end{enumerate}
\end{lemma}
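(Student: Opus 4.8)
The plan is to treat the two closure properties by quite different means: closure under filtered colimits will follow formally from the orthogonality description already recorded in \cref{rem:alg_omegaE_orthogonal}, whereas closure under $U$-retracts is where the defining restriction on $\bS$-relative judgments — that each premise be a formula over $\Sigma$ alone — is used in an essential way.

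For filtered colimits, I would start from \cref{rem:alg_omegaE_orthogonal}, which exhibits $\Alg(\Omega,E)$ as the orthogonality class $\orth{\Lambda}$ inside $\Alg\Omega$, where
\[
\Lambda=\{~\bF\repn{\tup{x}_i.\phi_i}_\bS\arr{\repn{\tup{x}_i}}\repn{\tup{x}_i.\psi_i}~\}_{i\in I}.
\]
Every arrow in $\Lambda$ is an epimorphism by \cref{prop:validity_for_PHL} (applied to the theory $\pht{\Omega}{\varnothing}$), and each domain $\bF\repn{\tup{x}_i.\phi_i}_\bS\cong\repn{\tup{x}_i.\phi_i}$ is a representing model, hence finitely presentable in $\Alg\Omega$ by \cref{thm:repn_enumerates_fpobjects}. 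Since $\Alg\Omega$ is locally finitely presentable and in particular has filtered colimits, \cref{lem:orth_closed_under_filcolim} applies directly and yields that $\orth{\Lambda}=\Alg(\Omega,E)$ is closed under filtered colimits.

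For $U$-retracts, suppose $f:\bA\to\bB$ is a $U$-retraction in $\Alg\Omega$ with $\bA\in\Alg(\Omega,E)$, and fix a section $s:U\bB\to U\bA$ in $\A$ with $(Uf)\circ s=\id$. I would verify each judgment $(\phi\seq{\tup{x}}\psi)\in E$ in $\bB$ by an element chase. Take $\tup{b}\in\intpn{\tup{x}.\phi}{\bB}$. Because $\phi$ is a Horn formula over $\Sigma$ and $s$ is a $\Sigma$-homomorphism, the interpretation of $\phi$ is preserved by $s$ (an immediate induction from the definition of $\Sigma$-homomorphism), so $s(\tup{b})\in\intpn{\tup{x}.\phi}{\bA}$. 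Since $\bA$ satisfies $\phi\seq{\tup{x}}\psi$, we get $s(\tup{b})\in\intpn{\tup{x}.\psi}{\bA}$. Now $f$ is a genuine $(\Sigma+\Omega)$-homomorphism, so it preserves the interpretation of $\psi$ — which may contain operators from $\Omega$ — giving $f(s(\tup{b}))\in\intpn{\tup{x}.\psi}{\bB}$. Finally, $(Uf)\circ s=\id$ forces $f(s(\tup{b}))=\tup{b}$, whence $\tup{b}\in\intpn{\tup{x}.\psi}{\bB}$. As $\tup{b}$ was arbitrary, $\phi\seq{\tup{x}}\psi$ is valid in $\bB$, and so $\bB\in\Alg(\Omega,E)$.

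The one point requiring care, and the reason the statement speaks of $U$-retracts rather than retracts in $\Alg\Omega$, is the directional asymmetry in the chase above: the section $s$ lives only in $\A=\bS\-\PMod$ and not in $\Alg\Omega$, so it can transport witnesses of the premise $\phi$ (a $\Sigma$-formula) from $\bB$ to $\bA$, but it cannot be used on the conclusion $\psi$; the conclusion must instead be carried back along the honest $\Omega$-homomorphism $f$ in the opposite direction. This is exactly the interplay that the definition of an $\bS$-relative judgment is designed to support, and it is what makes the argument go through with $s$ merely an $\bS$-homomorphism.
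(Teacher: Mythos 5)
Your proposal is correct and is essentially the paper's own argument: the filtered-colimit part is verbatim the appeal to \cref{lem:orth_closed_under_filcolim} via the orthogonality class of \cref{rem:alg_omegaE_orthogonal}, and your element chase for $U$-retracts is the image, under the dictionary of \cref{prop:repn_obj_represents_intpn} and \cref{prop:validity_for_PHL}, of the paper's diagram chase (your witness $\tup{b}\in\intpn{\tup{x}.\phi}{\bB}$ is the paper's morphism $f:\bF\repn{\tup{x}_i.\phi_i}_\bS\to\bB$, your $s(\tup{b})$ is its lift $g$ along $Up$, and your push-forward along $f$ is the paper's composite $ph$). You also correctly isolate the one point of substance — the section $s$ lives only in $\bS\-\PMod$, so it can transport the $\Sigma$-premise but not the $(\Sigma+\Omega)$-conclusion — which is exactly the asymmetry the paper's proof exploits.
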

\begin{proof}
    We follow the notation used in \cref{rem:alg_omegaE_orthogonal}.
    \begin{enumerate}
        \item
        Let $p:\bA\to \bB$ in $\Alg\Omega$ be a $U$-retraction and assume $\bA\in\Alg(\Omega,E)$.
        To prove $\bB\in\Alg(\Omega,E)$, take a morphism $f:\bF\repn{\tup{x}_i.\phi_i}_\bS\to\bB$.
        Considering the morphism $f^\flat:\repn{\tup{x}_i.\phi_i}_\bS\to U\bB$ corresponding to $f$ by the adjunction $\bF\dashv U$, 
        since $Up$ is a retraction, there exists a morphism $g:\repn{\tup{x}_i.\phi_i}_\bS\to U\bA$ satisfying $(Up)g=f^\flat$.
        Considering the morphism $g^\sharp:\bF\repn{\tup{x}_i.\phi_i}_\bS\to \bA$ corresponding to $g$ by $\bF\dashv U$, 
        since $\phi_i\seq{\tup{x}_i}\psi_i$ is valid in $\bA$, there exists a unique $h:\repn{\tup{x}_i.\psi_i}\to \bA$ such that $h\repn{\tup{x}_i}=g^\sharp$.
        Then $ph\repn{\tup{x}_i}=pg^\sharp=f$ holds.
        Moreover, such $ph$ is unique since $\repn{\tup{x}_i}$ is an epimorphism.
        Therefore, $\bB$ is orthogonal to $\repn{\tup{x}_i}\,(\forall i\in I)$ and it follows from \cref{prop:validity_for_PHL} that $\bB$ belongs to $\Alg(\Omega,E)$.
        \begin{equation*}
        \begin{tikzcd}[column sep=huge, row sep=huge]
            \repn{\tup{x}_i.\phi_i}_\bS\arrow[r,"\exists g",dashed]\arrow[rd,"f^\flat"',pos=0.7]  &  U\bA\arrow[d,"Up"]  \\
            &  U\bB
        \end{tikzcd}\incat{\A}
        \quad\quad
        \begin{tikzcd}[column sep=huge, row sep=huge]
            \bF\repn{\tup{x}_i.\phi_i}_\bS\arrow[d,"\repn{\tup{x}_i}"']\arrow[r,"g^\sharp"]\arrow[rd,"f"',pos=0.7]  &  \bA\arrow[d,"p"]  \\
            \repn{\tup{x}_i.\psi_i}\arrow[ru,"\exists !h",pos=0.7,crossing over,dashed]  &  \bB
        \end{tikzcd}\incat{\Alg\Omega}
        \end{equation*}
        %-------------------------------------------------------------------------------------------------------------------------
        \item
        This follows from \cref{lem:orth_closed_under_filcolim}.
        \qedhere
    \end{enumerate}
\end{proof}

\begin{theorem}\label{thm:from_alg_to_monad}
    Let $(\Omega,E)$ be an $\bS$-relative algebraic theory.
    Then $\Alg(\Omega,E)$ is a strictly finitary monadic category over $\bS\-\PMod$.
\end{theorem}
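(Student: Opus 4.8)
The plan is to factor the forgetful functor through $\Alg\Omega$ and then invoke Beck's strict monadicity theorem, exactly as in the proof of \cref{lem:finitary_endofunctor_monadic_adjoint}. Write $\A:=\bS\-\PMod$. By \cref{cor:alg_omega_monadic} the forgetful functor $U:\Alg\Omega\to\A$ is strictly finitary monadic, with left adjoint $\bF$; by \cref{rem:alg_omegaE_orthogonal} the inclusion $i:\Alg(\Omega,E)\hookrightarrow\Alg\Omega$ is reflective, with left adjoint $r$. Hence the forgetful functor $V:=U\circ i:\Alg(\Omega,E)\to\A$ has the left adjoint $G:=r\circ\bF$, since for $X\in\A$ and $\bA\in\Alg(\Omega,E)$ one has $\A(X,V\bA)\cong\Alg\Omega(\bF X,i\bA)\cong\Alg(\Omega,E)(r\bF X,\bA)$. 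It then remains to check that $V$ satisfies the hypotheses of Beck's strict monadicity theorem and that the induced monad is finitary.

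For finitariness, I would first argue that $V$ creates filtered colimits. Indeed, $U$ creates filtered colimits because $H_\Omega$ is finitary (this is the filtered-colimit half of \cref{lem:finitary_endofunctor_monadic_adjoint}), and by \cref{lem:closed_under_localret_filteredcolim} the subcategory $\Alg(\Omega,E)$ is closed under filtered colimits in $\Alg\Omega$; thus a filtered colimit in $\Alg(\Omega,E)$ is computed as in $\Alg\Omega$ and hence in $\A$, so $V$ preserves (indeed creates) it. Since $G$ is a left adjoint it preserves all colimits, so the induced monad $T:=VG$ preserves filtered colimits and is therefore finitary.

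For strict monadicity, I would verify that $V$ strictly creates coequalizers of $V$-split pairs. Let $\bA\rightrightarrows\bB$ be such a pair in $\Alg(\Omega,E)$, so that $V\bA\rightrightarrows V\bB$ admits a split coequalizer $e$ in $\A$. Applying $i$, the pair $i\bA\rightrightarrows i\bB$ in $\Alg\Omega$ has $U$-image equal to the given $V$-split pair; since $U$ strictly creates absolute, and in particular split, coequalizers, there is a unique coequalizer $\bar e:\bB\to\bC$ in $\Alg\Omega$ with $U\bar e=e$. The crucial observation is that a split coequalizer morphism is a split epimorphism, so $e=U\bar e$ is a retraction in $\A$; hence $\bar e$ is a $U$-retraction. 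As $\bB\in\Alg(\Omega,E)$ and, by \cref{lem:closed_under_localret_filteredcolim}, $\Alg(\Omega,E)$ is closed under $U$-retracts in $\Alg\Omega$, the object $\bC$ lies in $\Alg(\Omega,E)$. Therefore $\bar e$ is already a coequalizer in $\Alg(\Omega,E)$, strictly created by $V$. Beck's strict monadicity theorem (\cite{maclane1998working} VI.7 Theorem 1) then shows that $V$ is strictly monadic, and combined with the previous paragraph, strictly finitary monadic.

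The main obstacle is precisely this last step: the coequalizer that $U$ creates in $\Alg\Omega$ need not a priori land in the subcategory $\Alg(\Omega,E)$. This is exactly where closure under $U$-retracts (rather than mere reflectivity together with closure under filtered colimits) is indispensable, the point being that the coequalizer morphism of a $V$-split pair becomes a retraction after applying $U$, so its codomain is a $U$-retract of an object already known to lie in $\Alg(\Omega,E)$.
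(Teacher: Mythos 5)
Your proposal is correct and follows essentially the same route as the paper: factor the forgetful functor through $\Alg\Omega$, use that $U:\Alg\Omega\to\bS\-\PMod$ is strictly finitary monadic, and observe that the coequalizer of a $U$-split pair is a $U$-retraction so that \cref{lem:closed_under_localret_filteredcolim} forces it to stay inside $\Alg(\Omega,E)$, after which Beck's strict monadicity theorem applies. The only difference is that you spell out the finitariness of the induced monad ($V$ creates filtered colimits, $G$ preserves them) slightly more explicitly than the paper does.
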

\begin{proof}
    We follow the notation used in \cref{rem:alg_omegaE_orthogonal}.
    We have the following adjunctions:
    \begin{equation*}
    \begin{tikzcd}
        \A\arrow[r,"\bF",shift left=7pt]\arrow[r,"\perp"pos=0.5,phantom]
        &[10pt]\Alg\Omega\arrow[l,"U",shift left=7pt]\arrow[r,"r",shift left=7pt]\arrow[r,"\perp"pos=0.5,phantom]
        &[10pt]\Alg(\Omega,E)\arrow[l,shift left=7pt,hook',"\iota"]
    \end{tikzcd}
    \end{equation*}
    and $U\iota$ preserves filtered colimits.
    To prove that $U\iota$ is monadic, we use Beck's strict monadicity theorem (\cite{maclane1998working} VI.7 Theorem 1).
    We claim that $\Alg(\Omega,E)\subseteq\Alg\Omega$ is closed under $U$-split coequalizers, hence $U\iota$ strictly creates $U$-split coequalizers.
    Indeed, given a $U$-split coequalizer:
    \begin{equation*}
    \begin{tikzcd}[column sep=large, row sep=large]
        \bA\arrow[r,shift left=4pt,"f"]\arrow[r,shift right=4pt,"g"']  &  \bB\arrow[r,"q"]  &  \bC
    \end{tikzcd}\incat{\Alg\Omega}
    \end{equation*}
    with $\bA,\bB\in\Alg(\Omega,E)$, $Uq$ is a retraction and thus $q$ is a $U$-retraction.
    By \cref{lem:closed_under_localret_filteredcolim}, $\Alg(\Omega,E)\subseteq\Alg\Omega$ is closed under $U$-retracts, which shows that $\bC\in\Alg(\Omega,E)$.
\end{proof}

%-----------------------------------------------------------------------------
\section{A generalization of Birkhoff's variety theorem}\label{section4}
%-----------------------------------------------------------------------------
Our goal is to prove \cref{thm:birkhoff_for_rat}, which is a generalization of the classical Birkhoff's theorem (\cref{thm:birkhoff_classical}) to our relative algebraic theories.

%-----------------------------------------------------------------------------
\subsection{Presentable proper factorization systems}
%-----------------------------------------------------------------------------
We recall the definition of an orthogonal factorization system, which plays an important role in subsequent subsections.

\begin{definition} Let $\C$ be a category.
    \begin{enumerate}
        \item
        Given morphisms $e$ and $m$ in $\C$, we write $e\perp m$ if for any commutative square $ve=mu$ there exists a unique diagonal filler making both triangles commute:
        \begin{equation*}
            \begin{tikzcd}
                \cdot\arrow[r,"u"]\arrow[d,"e"'] & \cdot\arrow[d,"m"] \\
                \cdot\arrow[r,"v"']\arrow[ur,dashed,"\exists !"] & \cdot
            \end{tikzcd}
        \end{equation*}
        %----------------------------------------------------------------------------------------
        \item
        Given a class of morphisms $\Lambda$, denote by $\lorth(\Lambda)$ and $\rorth(\Lambda)$ the classes
        \begin{align*}
            \lorth(\Lambda)&:=\{ e \mid e\perp m\text{ for all }m\in\Lambda\} \\
            \rorth(\Lambda)&:=\{ m \mid e\perp m\text{ for all }e\in\Lambda\}.
        \end{align*}
        %----------------------------------------------------------------------------------------
        \item
        An \emph{orthogonal factorization system} in $\C$ is a pair $(\bE,\bM)$ of classes of morphisms in $\C$ that satisfies the following conditions:
        \begin{itemize}
            \item $\bE$ and $\bM$ are closed under composition and contain all isomorphisms in $\C$,
            \item Every morphism $f$ in $\C$ has a factorization $f=me$ with $e\in\bE$ and $m\in\bM$,
            \item $\bE\perp\bM$ holds, i.e., for any $e\in\bE$ and $m\in\bM$, $e\perp m$ holds.
        \end{itemize}
        Given an orthogonal factorization system $(\bE,\bM)$, we have $\bE=\lorth(\bM)$ and $\bM=\rorth(\bE)$.
        These can be verified straightforwardly.
        %----------------------------------------------------------------------------------------
        \item
        A \emph{proper factorization system} is an orthogonal factorization system $(\bE,\bM)$ such that every morphism in $\bE$ is an epimorphism and every morphism in $\bM$ is a monomorphism.
    \end{enumerate}
\end{definition}

The following is an orthogonal version of the fact known as the small object argument.

\begin{theorem}\label{thm:OFS_in_LPcategory}
    Let $\Lambda$ be a set of morphisms in a locally presentable category $\A$.
    Then, 
    \[
    (\,\lorth\rorth(\Lambda),\rorth(\Lambda)\,)
    \]
    is an orthogonal factorization system in $\A$.
\end{theorem}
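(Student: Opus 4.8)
My plan is to observe that $(\bE,\bM):=(\lorth\rorth(\Lambda),\rorth(\Lambda))$ is automatically a \emph{prefactorization} system, so that the only genuine content is the existence of factorizations, and then to produce those factorizations by an orthogonal version of the small object argument, reducing the unique-diagonal condition to a mere lifting property by means of codiagonals. First I would set $\bM:=\rorth(\Lambda)$ and $\bE:=\lorth(\bM)=\lorth\rorth(\Lambda)$ and dispatch the formal part. For any class, the operators $\lorth(-)$ and $\rorth(-)$ yield classes that contain all isomorphisms and are closed under composition, so the first clause of the definition holds for $\bE$ and $\bM$. The antitone Galois connection induced by $\perp$ satisfies the triple identity $\rorth\lorth\rorth=\rorth$, whence $\rorth(\bE)=\rorth\lorth\rorth(\Lambda)=\rorth(\Lambda)=\bM$, while $\lorth(\bM)=\bE$ holds by definition; in particular $\bE\perp\bM$. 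Thus everything in the definition is immediate except the existence of $(\bE,\bM)$-factorizations, on which I would concentrate.

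For the reduction, given $\ell:A\to B$ in $\Lambda$ let $\nabla_\ell:B+_A B\to B$ be the codiagonal of the pushout of $\ell$ along itself. A direct diagram chase shows that a morphism $m$ satisfies the unique-diagonal condition $\ell\perp m$ if and only if $m$ has the (merely existential) right lifting property against \emph{both} $\ell$ and $\nabla_\ell$: lifting against $\ell$ gives existence of fillers, while a lifting problem of $\nabla_\ell$ against $m$ has a solution exactly when the two components coincide, so lifting against $\nabla_\ell$ encodes uniqueness. Setting $\hat\Lambda:=\Lambda\cup\{\nabla_\ell\}_{\ell\in\Lambda}$, again a set, I therefore obtain that $\bM=\rorth(\Lambda)$ is precisely the class of morphisms with the weak right lifting property against $\hat\Lambda$.

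Now I would run the small object argument. Since $\Lambda$ is a set and $\A$ is locally presentable, I choose a regular cardinal $\lambda$ such that every domain and codomain of a morphism in $\hat\Lambda$ is $\lambda$-presentable; this is legitimate because $B+_A B$ is $\lambda$-presentable whenever $A,B$ are. Given $f:X\to Y$, I build a chain $X=Z_0\to Z_1\to\cdots$ over $Y$ in which $Z_{\alpha+1}$ is the pushout of $Z_\alpha$ along the coproduct of all morphisms of $\hat\Lambda$ indexed by the lifting problems into $(Z_\alpha\to Y)$, with colimits taken at limit stages, and I set $Z:=Z_\lambda$ with induced $m:Z\to Y$ and $e:X\to Z$, so that $f=me$. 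As $\lambda$ is regular the chain of length $\lambda$ is $\lambda$-filtered, so any lifting problem of a member of $\hat\Lambda$ against $m$ factors through some $Z_\alpha$ (its domain being $\lambda$-presentable) and is solved at stage $\alpha+1$; hence $m$ has the weak right lifting property against $\hat\Lambda$, i.e.\ $m\in\bM$.

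Finally I would check $e\in\bE=\lorth(\bM)$. One has $\hat\Lambda\subseteq\bE$: trivially $\Lambda\subseteq\lorth\rorth(\Lambda)=\bE$, and for each $\ell$ the same codiagonal computation shows $\nabla_\ell\perp m$ for every $m\in\bM$ (existence of a filler is forced by the uniqueness clause of $\ell\perp m$, and it is then automatically unique), so $\nabla_\ell\in\lorth(\bM)=\bE$. Since $\bE=\lorth(\bM)$ is closed under coproducts, cobase change, and transfinite composition, and $e$ is assembled from $\hat\Lambda$ by exactly these operations, it follows that $e\in\bE$. This produces the desired $(\bE,\bM)$-factorization and, together with the formal part, completes the proof. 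I expect the main obstacle to lie in the factorization step: correctly packaging the unique-diagonal condition as a weak lifting property so that the ordinary small object argument applies, and ensuring the transfinite construction converges, which is exactly where local presentability enters through a uniform presentability bound $\lambda$ on the members of $\hat\Lambda$.
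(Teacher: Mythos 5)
Your proof is correct, but it takes a genuinely different route from the paper's. You run the small object argument directly: you reduce the unique-diagonal condition to a weak lifting property by adjoining the codiagonals $\nabla_\ell:B+_AB\to B$ to $\Lambda$, build the factorization by a transfinite chain of pushouts of coproducts, and then place the left factor in $\bE$ using the closure of $\lorth(\bM)$ under coproducts, cobase change, and transfinite composition. The paper instead works one object at a time in the slice: for $f:Y\to X$ it observes that $\rorth(\Lambda)/X=\orth{\Lambda_X}$ is a small-orthogonality class of the locally presentable category $\A/X$, hence reflective (\cref{thm:orth_sub_problem}); the reflection of $f$ supplies the factorization, and a pullback argument (using that $\rorth(\Lambda)$ is stable under composition and pullback) shows the unit $\eta_f$ lies in $\lorth\rorth(\Lambda)$. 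The two arguments are close cousins --- the reflectivity theorem the paper invokes is itself proved by a transfinite construction --- but yours is self-contained modulo standard closure properties of left orthogonality classes, at the cost of the extra bookkeeping with $\hat\Lambda$ and the uniform presentability bound, whereas the paper's is shorter by outsourcing the transfinite step and avoids the codiagonal trick entirely since reflections already deliver \emph{unique} factorizations. Two small points you should make explicit: the uniqueness of the filler in the verification that $\nabla_\ell\perp m$ holds because $\nabla_\ell$ is a split epimorphism; and the closure of $\lorth(\bM)$ under transfinite composition (for orthogonality, not just weak lifting) deserves at least a sentence, since it is the step where the limit-ordinal stages of your chain are absorbed.
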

\begin{proof}
    It suffices to show that every morphism in $\A$ has a $(\,\lorth\rorth(\Lambda),\rorth(\Lambda)\,)$-factorization.
    Fix $X\in\A$, and consider the following full subcategory of the slice category $\A/X$:
    \begin{equation*}
        \rorth(\Lambda)/X:=\{ m\in\A/X \mid m\in\rorth(\Lambda) \}.
    \end{equation*}
    $\rorth(\Lambda)/X$ is a small orthogonality class of $\A/X$.
    Indeed, $\orth{\Lambda_X}=\rorth(\Lambda)/X$ holds, where $\Lambda_X$ is the class of all morphisms in $\A/X$ belonging to $\Lambda$.
    Since $\A/X$ is locally presentable, $\rorth(\Lambda)/X$ is a reflective full subcategory of $\A/X$.
    Thus, for each $f:Y\to X$ in $\A$, we can take the following reflection into $\rorth(\Lambda)/X$:
    \begin{equation}\label{eq:reflection_to_M}
        \begin{tikzcd}
            Y\arrow[rr,"\eta_f"]\arrow[rd,"f"'] & & A_f\arrow[ld,"m_f\,\in\rorth(\Lambda)"] \\
            & X &
        \end{tikzcd}\incat{\A}
    \end{equation}

    To prove that $\eta_f$ belongs to $\lorth\rorth(\Lambda)$, 
    take the following commutative diagram arbitrarily:
    \begin{equation}\label{eq:comm_square_etaf}
        \begin{tikzcd}
            Y\arrow[d,"\eta_f"']\arrow[r,"u"] & B\arrow[d,"m\,\in\rorth(\Lambda)"] \\
            A_f\arrow[r,"v"'] & C
        \end{tikzcd}\incat{\A}.
    \end{equation}
    Let us construct a unique diagonal filler for the square \cref{eq:comm_square_etaf}.
    Take the pullback $P$ of $v$ and $m$, and consider a canonical morphism $g:Y\to P$ as follows:
    \begin{equation*}
        \begin{tikzcd}
            Y\arrow[rrd,bend left=15,"u"]\arrow[ddr,bend right=15,"\eta_f"']\arrow[rd,dashed,"g"',pos=0.8] &[-10pt] & \\[-10pt]
            & P\arrow[r,"\pi'"']\arrow[d,"\pi"]\arrow[rd,pos=0.1,phantom,"\lrcorner"] & B\arrow[d,"m"] \\
            & A_f\arrow[r,"v"'] & C
        \end{tikzcd}\incat{\A}
    \end{equation*}
    Since $g$ is a morphism $f\to m_f\pi$ in $\A/X$ and $\rorth(\Lambda)$ is closed under compositions and pullbacks, $m_f\pi$ belongs to $\rorth(\Lambda)$.
    Thus, by the universality of $\eta_f$, there exists a unique morphism $h:A_f\to P$ which makes the following commutes:
    \begin{equation}\label{eq:unique_h}
        \begin{tikzcd}
            Y\arrow[rrr,"g"]\arrow[rd,"\eta_f"description]\arrow[rddd,"f"'] & &[-20pt] &[-20pt] P\arrow[ddl,"\pi"] \\
            & A_f\arrow[rru,dashed,"h"]\arrow[dd,"m_f"description] & & \\[-20pt]
            & & A_f\arrow[ld,"m_f"] & \\
            & X & &
        \end{tikzcd}\incat{\A}.
    \end{equation}
    Then, in the slice category $\A/X$,
    \begin{equation*}
        \begin{tikzcd}
            f\arrow[rr,"\eta_f"]\arrow[dr,"\eta_f"'] & & m_f \\
            & m_f\arrow[ru,"\pi h"'] &
        \end{tikzcd}\incat{\A/X}
    \end{equation*}
    commutes, and thus $\pi h=\id$ holds by the universality of $\eta_f$.
    Now the following diagram commutes, and consequently, we get a diagonal filler $\pi' h$ for \cref{eq:comm_square_etaf}.
    \begin{equation*}
        \begin{tikzcd}
            Y\arrow[dd,"\eta_f"']\arrow[rr,"u"]\arrow[rd,"g"description] & & B\arrow[dd,"m"] \\
            & P\arrow[ru,"\pi'"description]\arrow[d,"\pi"] & \\
            A_f\arrow[ur,"h"description]\arrow[r,equal] & A_f\arrow[r,"v"'] & C
        \end{tikzcd}\incat{\A}
    \end{equation*}

    It remains to verify the uniqueness of $\pi' h$.
    Given another diagonal filler $k:A_f\to B$ for the square \cref{eq:comm_square_etaf}, 
    consider the following canonical morphism $h':A_f\to P$:
    \begin{equation*}
        \begin{tikzcd}
            A_f\arrow[rrd,bend left=15,"k"]\arrow[ddr,bend right=15,equal]\arrow[rd,dashed,"h'"',pos=0.8] &[-10pt] & \\[-10pt]
            & P\arrow[r,"\pi'"']\arrow[d,"\pi"]\arrow[rd,pos=0.1,phantom,"\lrcorner"] & B\arrow[d,"m"] \\
            & A_f\arrow[r,"v"'] & C
        \end{tikzcd}\incat{\A}.
    \end{equation*}
    By the universality of $P$, we have $h'\eta_f=g$, which implies the diagram replacing $h$ in \cref{eq:unique_h} with $h'$ commutes.
    Thus $h=h'$, and we have $k=\pi' h'=\pi' h$.

    The above argument proves the unique existence of a diagonal filler for \cref{eq:comm_square_etaf}, so $\eta_f$ belongs to $\lorth\rorth(\Lambda)$.
\end{proof}

\begin{notation}
    Given a class of morphisms $\Lambda$, denote by $\Lambda_\lambda$ the essentially small class
    \begin{equation*}
        \Lambda_\lambda:=\{ f\in\Lambda \mid \dom f, \cod f\text{ are }\lambda\text{-presentable} \}.
    \end{equation*}
\end{notation}

\begin{lemma}\label{lem:mono_rlp_wrt_retracts}
    Let $\A$ be a locally $\lambda$-presentable category.
    Denote by $\mathbf{Ret}$ the class of all retractions in $\A$ and by $\mathbf{Mono}$ the class of all monomorphisms in $\A$.
    Then, $\rorth(\mathbf{Ret}_\lambda)=\mathbf{Mono}$ holds.
\end{lemma}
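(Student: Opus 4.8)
The plan is to prove the two inclusions separately. The inclusion $\mathbf{Mono}\subseteq\rorth(\mathbf{Ret}_\lambda)$ will in fact hold in any category (neither the cardinal bound nor presentability is needed), while local $\lambda$-presentability enters only in the reverse inclusion.

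For $\mathbf{Mono}\subseteq\rorth(\mathbf{Ret}_\lambda)$, let $m:X\to Y$ be a monomorphism and let $e:A\to B$ be any retraction, with section $s:B\to A$ satisfying $es=\id_B$. Given a commutative square $ve=mu$ with $u:A\to X$ and $v:B\to Y$, I would propose the diagonal $d:=us:B\to X$. One computes $md=mus=ves=v$ directly from $es=\id_B$; and from $mus=ves=v$ it follows that $m(use)=(mus)e=ve=mu$, so $use=u$, i.e.\ $de=u$, because $m$ is monic. Uniqueness of the filler is automatic, since every retraction is an epimorphism. Hence $e\perp m$ for \emph{every} retraction $e$, and a fortiori for every $\lambda$-presentable one, giving $m\in\rorth(\mathbf{Ret}_\lambda)$.

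For the reverse inclusion $\rorth(\mathbf{Ret}_\lambda)\subseteq\mathbf{Mono}$, let $m:X\to Y$ lie in $\rorth(\mathbf{Ret}_\lambda)$. Since $\A$ is locally $\lambda$-presentable, the $\lambda$-presentable objects form a (strong) generator, so it suffices to show that $mf=mg$ forces $f=g$ for all parallel $f,g:Z\to X$ with $Z$ $\lambda$-presentable. The key device is the codiagonal $\nabla:Z+Z\to Z$: it is a retraction, split by either coprojection $\iota_1,\iota_2$, and both $Z$ and $Z+Z$ are $\lambda$-presentable---the latter because $\lambda$ is regular, so binary coproducts of $\lambda$-presentables are again $\lambda$-presentable---whence $\nabla\in\mathbf{Ret}_\lambda$. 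Setting $v:=mf=mg$, the square with top edge $[f,g]:Z+Z\to X$ and bottom edge $v:Z\to Y$ commutes, since $m[f,g]=[mf,mg]=[v,v]=v\nabla$. A diagonal filler $d:Z\to X$ then satisfies $d\nabla=[f,g]$; precomposing with $\iota_1$ and $\iota_2$ and using $\nabla\iota_1=\nabla\iota_2=\id_Z$ yields $f=d=g$. Thus $m$ is a monomorphism.

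The routine calculations above are straightforward, so the substance of the argument is entirely in the second inclusion, and the point I expect to require care is the \emph{packaging} rather than any single computation: recognizing the codiagonal $\nabla:Z+Z\to Z$ as the retraction that detects the monomorphism condition, and then keeping its domain $\lambda$-presentable so that it genuinely belongs to the restricted class $\mathbf{Ret}_\lambda$. This forces the reduction to $\lambda$-presentable test objects via the generator property, which is exactly where local $\lambda$-presentability is used; once that reduction is in place, existence of a single lift already collapses $f$ and $g$, and uniqueness plays no role.
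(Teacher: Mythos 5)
Your proposal is correct and follows essentially the same route as the paper's proof: the section of the retraction yields the filler $us$ in the first inclusion (with monicity of $m$ forcing the upper triangle), and the codiagonal $\nabla:Z+Z\to Z$ on a $\lambda$-presentable test object $Z$, justified by the generator property, detects monicity in the second. The only difference is cosmetic (you justify uniqueness of the filler via $e$ being epi, the paper via $m$ being mono), and your extra remark that $Z+Z$ stays $\lambda$-presentable is a detail the paper leaves implicit.
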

\begin{proof}
    To show $\rorth(\mathbf{Ret}_\lambda)\supseteq\mathbf{Mono}$, take a monomorphism $m:A\to B$.
    Consider a commutative square $vr=mu$ and assume that $r:X\to Y$ is a retraction.
    Since $m$ is a monomorphism, a diagonal filler is unique if it exists.
    Thus, it suffices to prove the existence part.
    Taking a section $s:Y\to X$ of $r$, we have $mus=vrs=v$ in the following diagram:
    \begin{equation*}
        \begin{tikzcd}
            X\arrow[dd,"r"']\arrow[rr,"u"] & & A\arrow[dd,"m"] \\
            & X\arrow[d,"r"]\arrow[ru,"u"] & \\
            Y\arrow[ru,"s"]\arrow[r,equal] & Y\arrow[r,"v"'] & B
        \end{tikzcd}
    \end{equation*}
    Since $m$ is a monomorphism, the upper triangle of the above also commutes, which proves $us$ is a unique diagonal filler.

    We next prove $\rorth(\mathbf{Ret}_\lambda)\subseteq\mathbf{Mono}$.
    Take a morphism $f:X\to Y$ belonging to $\rorth(\mathbf{Ret}_\lambda)$.
    To show that $f$ is a monomorphism, take the following cofork arbitrarily:
    \begin{equation*}
        \begin{tikzcd}
            A\arrow[r,shift left=2,"u"]\arrow[r,shift right=2,"v"'] & X\arrow[r,"f"] & Y
        \end{tikzcd}
    \end{equation*}
    Since $\lambda$-presentable objects form a generator for $\A$, we can assume that $A$ is $\lambda$-presentable without loss of generality.
    Considering the codiagonal $\nabla:A+A\to A$, we have the following commutative square:
    \begin{equation*}
        \begin{tikzcd}
            A+A\arrow[d,"\nabla"']\arrow[r,"{(u,v)}"] & X\arrow[d,"f"] \\
            A\arrow[r,"fu(=fv)"'] & Y
        \end{tikzcd}
    \end{equation*}
    By $\nabla\in\mathbf{Ret}_\lambda$, the above square has a unique diagonal filler.
    This implies $u=v$, which shows that $f$ is a monomorphism.
\end{proof}

\begin{definition}
    Let $\bM$ be a class of monomorphisms.
    A morphism $f:X\to Y$ is \emph{$\bM$-extremal} if $f$ factors through no proper $\bM$-subobject of $Y$, i.e., 
    if $f$ has a factorization $f=mg$ with $m\in\bM$, then $m$ is an isomorphism.
\end{definition}

\begin{lemma}\label{lem:M-extremal_M-strong}
    Let $\C$ be a category with pullbacks. Let $\bM$ be a class of monomorphisms in $\C$ which is closed under pullbacks, i.e., 
    for every pullback square
    \begin{equation*}
        \begin{tikzcd}
            \cdot\arrow[r]\arrow[d,"m'"']\arrow[rd,pos=0.1,phantom,"\lrcorner"] & \cdot\arrow[d,"m"] \\
            \cdot\arrow[r] & \cdot
        \end{tikzcd}\incat{\C},
    \end{equation*}
    $m\in\bM$ implies $m'\in\bM$.
    Then, the class $\lorth(\bM)$ coincides with the class of all $\bM$-extremal morphisms in $\C$.
\end{lemma}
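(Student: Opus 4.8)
The plan is to prove the two inclusions separately, noting that one direction is purely formal while the other consumes both hypotheses on $\bM$.

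First I would show $\lorth(\bM)\subseteq\{\bM\text{-extremal morphisms}\}$, which uses neither the existence of pullbacks nor the closure condition. Take $e\in\lorth(\bM)$ and a factorization $e=mg$ with $m\in\bM$. Testing the lifting property $e\perp m$ against the commutative square whose top edge is $g$, left edge is $e$, right edge is $m$, and bottom edge is $\id$ (this square commutes precisely because $mg=e$) produces a diagonal filler $d$ with $md=\id$. Thus $m$ is a split epimorphism; being also a monomorphism, $m$ is an isomorphism, so $e$ is $\bM$-extremal.

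The substantial direction is $\{\bM\text{-extremal morphisms}\}\subseteq\lorth(\bM)$. Given an $\bM$-extremal $f\colon X\to Y$ and a commutative square $mu=vf$ with $m\colon C\to D$ in $\bM$, I would first observe that a diagonal filler is automatically unique, since $m$ is a monomorphism; so only existence must be supplied. The key move is to form the pullback of $v$ along $m$,
\[
\begin{tikzcd}
P\arrow[r,"\pi'"]\arrow[d,"\pi"']\arrow[rd,phantom,pos=0.1,"\lrcorner"] & C\arrow[d,"m"] \\
Y\arrow[r,"v"'] & D
\end{tikzcd}
\]
which exists by hypothesis, and closure of $\bM$ under pullbacks gives $\pi\in\bM$. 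The cone $(f,u)$, which exists because $mu=vf$, induces a canonical $g\colon X\to P$ with $\pi g=f$ and $\pi' g=u$, so $f$ factors through the $\bM$-subobject $\pi$ of $Y$. Extremality of $f$ then forces $\pi$ to be an isomorphism, and $d:=\pi'\pi^{-1}\colon Y\to C$ is the required filler: indeed $df=\pi'\pi^{-1}f=\pi' g=u$, and $md=m\pi'\pi^{-1}=v\pi\pi^{-1}=v$.

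The hard part will be recognizing the construction in the reverse inclusion, namely pulling $m$ back along $v$ so as to manufacture an $\bM$-subobject of $Y$ through which $f$ must factor; this is exactly the point where both the existence of pullbacks and the pullback-stability of $\bM$ are consumed. Everything else is routine diagram chasing, and the monomorphism property built into $\bM$ handles all uniqueness claims throughout.
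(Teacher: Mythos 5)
Your proof is correct and is exactly the standard argument the paper has in mind when it declares the proof ``straightforward'': the retract trick against the square with identity bottom edge for $\lorth(\bM)\subseteq\{\bM\text{-extremal}\}$, and pulling $m$ back along $v$ to produce an $\bM$-subobject of $Y$ through which $f$ factors for the converse. Both inclusions are verified completely, including the uniqueness of fillers via monicity, so nothing is missing.
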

\begin{proof}
    The proof is straightforward.
\end{proof}

The following definition is due to \cite{hebert2004algebraically}.

\begin{definition}
    An orthogonal factorization system $(\bE,\bM)$ is \emph{$\lambda$-presentable} if $\rorth(\bE_\lambda)\subseteq\bM$.
\end{definition}

\begin{theorem}\label{thm:PPFS}
    Let $\A$ be a locally $\lambda$-presentable category.
    Let $\Lambda$ be a class of epimorphisms between $\lambda$-presentable objects in $\A$.
    Consider the following two classes of morphisms in $\A$.
    \begin{itemize}
        \item $\bM$: the class of all monomorphisms in $\A$ belonging to $\rorth(\Lambda)$,
        \item $\bE$: the class of all $\bM$-extremal morphisms in $\A$.
    \end{itemize}
    Then $(\bE,\bM)$ is a $\lambda$-presentable proper factorization system.
    Conversely, every $\lambda$-presentable proper factorization system in $\A$ is constructed as above.
\end{theorem}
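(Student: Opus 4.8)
The plan is to reduce the first assertion to \cref{thm:OFS_in_LPcategory} by enlarging $\Lambda$ with retractions, and to identify the left class via \cref{lem:M-extremal_M-strong}. Writing $\mathbf{Mono}$ for the monomorphisms and $\mathbf{Ret}_\lambda$ for the retractions between $\lambda$-presentable objects, I would set $\Lambda':=\Lambda\cup\mathbf{Ret}_\lambda$. Both $\Lambda$ and $\mathbf{Ret}_\lambda$ live between $\lambda$-presentable objects, which form an essentially small class by \cref{thm:basic_properties_of_lfp}, so $\Lambda'$ is essentially small and \cref{thm:OFS_in_LPcategory} applies to it. The key computation is
\[
\rorth(\Lambda')=\rorth(\Lambda)\cap\rorth(\mathbf{Ret}_\lambda)=\rorth(\Lambda)\cap\mathbf{Mono}=\bM,
\]
where the middle equality is \cref{lem:mono_rlp_wrt_retracts}. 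Thus \cref{thm:OFS_in_LPcategory} shows that $(\lorth\rorth(\Lambda'),\rorth(\Lambda'))=(\lorth(\bM),\bM)$ is an orthogonal factorization system. Since $\bM=\rorth(\Lambda')$ is a class of monomorphisms closed under pullbacks (it is $\rorth$ of a class and $\A$ is complete), \cref{lem:M-extremal_M-strong} identifies $\lorth(\bM)$ with the $\bM$-extremal morphisms, namely with $\bE$. Hence $(\bE,\bM)$ is an orthogonal factorization system.

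For properness, $\bM\subseteq\mathbf{Mono}$ holds by construction, so it remains to see $\bE\subseteq\mathbf{Epi}$. I would first check that every regular monomorphism lies in $\bM$: if $m$ equalizes some pair and $e\in\Lambda$ is an epimorphism, a routine chase gives $e\perp m$, so regular monomorphisms belong to $\rorth(\Lambda)\cap\mathbf{Mono}=\bM$. Now given $e\in\bE=\lorth(\bM)$ with $ue=ve$, factoring $e$ through the equalizer $m$ of $u,v$ and using $e\perp m$ produces a retraction of $m$; being both a monomorphism and a split epimorphism, $m$ is an isomorphism, forcing $u=v$. For $\lambda$-presentability I would note $\Lambda\subseteq\lorth\rorth(\Lambda)\subseteq\lorth(\bM)=\bE$ and $\mathbf{Ret}_\lambda\subseteq\lorth(\mathbf{Mono})\subseteq\lorth(\bM)=\bE$, the inclusion $\mathbf{Ret}_\lambda\subseteq\lorth(\mathbf{Mono})$ being exactly the content of the proof of \cref{lem:mono_rlp_wrt_retracts}. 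Hence $\Lambda'\subseteq\bE_\lambda$, and therefore $\rorth(\bE_\lambda)\subseteq\rorth(\Lambda')=\bM$, which is the defining condition of a $\lambda$-presentable factorization system.

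For the converse, given a $\lambda$-presentable proper factorization system $(\bE,\bM)$ I would take $\Lambda:=\bE_\lambda$; by properness these are epimorphisms between $\lambda$-presentable objects, so this is an admissible choice. Let $\bM':=\mathbf{Mono}\cap\rorth(\bE_\lambda)$ be the class built from this $\Lambda$. The inclusion $\bM\subseteq\bM'$ follows from $\bM=\rorth(\bE)\subseteq\rorth(\bE_\lambda)$ together with $\bM\subseteq\mathbf{Mono}$, while $\bM'\subseteq\bM$ is precisely the $\lambda$-presentability hypothesis $\rorth(\bE_\lambda)\subseteq\bM$. Hence $\bM'=\bM$, and the associated class of $\bM'$-extremal morphisms is $\lorth(\bM)=\bE$, again by \cref{lem:M-extremal_M-strong} and the identity $\bE=\lorth(\bM)$. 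Thus $(\bE,\bM)$ is recovered from $\Lambda=\bE_\lambda$.

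The main obstacle I anticipate is the properness on the $\bE$ side: whereas every other step is a formal manipulation of $\lorth$ and $\rorth$ feeding into the two cited lemmas, showing $\bE\subseteq\mathbf{Epi}$ genuinely uses the hypothesis that $\Lambda$ consists of epimorphisms, through the observation that regular monomorphisms are right-orthogonal to epimorphisms and hence belong to $\bM$.
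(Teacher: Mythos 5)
Your proposal is correct and follows the paper's proof in its essential structure: the same enlargement $\Lambda\cup\mathbf{Ret}_\lambda$, the same appeal to \cref{thm:OFS_in_LPcategory}, and the same identification of the two classes via \cref{lem:mono_rlp_wrt_retracts} and \cref{lem:M-extremal_M-strong}. The one step where you diverge is the verification that $\bE$ consists of epimorphisms: the paper observes that the diagonal $\Delta:Z\to Z\times Z$ lies in $\rorth(\Lambda^*)$ and applies $f\perp\Delta$ to the square built from a fork, whereas you factor through the equalizer of $u,v$ after checking that regular monomorphisms belong to $\bM$; both arguments are valid and use the hypothesis that $\Lambda$ consists of epimorphisms in the same way, just packaged through products versus equalizers. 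You also spell out two points the paper leaves implicit, namely the $\lambda$-presentability of the resulting system (via $\Lambda\cup\mathbf{Ret}_\lambda\subseteq\bE_\lambda$) and the converse direction (taking $\Lambda:=\bE_\lambda$ and checking $\bM'=\bM$), both of which are correct and make the argument more complete than the printed proof.
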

\begin{proof}
    Define $\Lambda^*:=\mathbf{Ret}_\lambda \cup\Lambda$.
    Since $\Lambda^*$ is essentially small, \cref{thm:OFS_in_LPcategory} shows that
    \[
    (\,\lorth\rorth(\Lambda^*),\rorth(\Lambda^*) \,)
    \]
    is a $\lambda$-presentable orthogonal factorization system in $\A$.
    \cref{lem:mono_rlp_wrt_retracts} implies $\rorth(\Lambda^*)=\bM$, and \cref{lem:M-extremal_M-strong} implies $\lorth\rorth(\Lambda^*)=\bE$.

    Take a morphism $f:X\to Y$ belonging to $\lorth\rorth(\Lambda^*)=\bE$.
    To show that $f$ is an epimorphism, fix the following fork:
    \begin{equation*}
        \begin{tikzcd}
            X\arrow[r,"f"] & Y\arrow[r,shift left=2,"u"]\arrow[r,shift right=2,"v"'] & Z
        \end{tikzcd}
    \end{equation*}
    Considering the diagonal $\Delta:Z\to Z\times Z$, we have $e\perp\Delta$ for every epimorphism $e$, which implies $\Delta\in\rorth(\Lambda^*)$.
    Thus $f\perp\Delta$ holds, and the following square has a unique diagonal filler:
    \begin{equation*}
        \begin{tikzcd}
            X\arrow[d,"f"']\arrow[r,"uf\,(=vf)"] & Z\arrow[d,"\Delta"] \\
            Y\arrow[r,"{(u,v)}"'] & Z\times Z
        \end{tikzcd}
    \end{equation*}
    This implies $u=v$, thus $f$ is an epimorphism.
    Now we have proved that $(\bE,\bM)$ is a $\lambda$-presentable proper factorization system in $\A$.
\end{proof}

The proof of the following is essentially the same as Theorem 2.10 of \cite{adamek2009orthogonal}.

\begin{theorem}\label{thm:co-intersection}
    Let $(\bE,\bM)$ be a $\lambda$-presentable proper factorization system in a locally $\lambda$-presentable category $\A$.
    Then every morphism $f:A\to X$ in $\bE$ with $\lambda$-presentable domain $A$ is a $\lambda$-filtered colimit (in $A/\A$) of morphisms in $\bE$ with $\lambda$-presentable codomain.
\end{theorem}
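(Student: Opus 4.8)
The plan is to realize $f$ as the colimit of the (large but $\lambda$-filtered) system of \emph{all} factorizations of $f$ through $\bE$-morphisms with $\lambda$-presentable codomain. Write $\bE_\lambda$ for the class of $\bE$-morphisms between $\lambda$-presentable objects. The first observation is that $\lambda$-presentability of the factorization system collapses to a clean identity: since $\bM=\rorth(\bE)\subseteq\rorth(\bE_\lambda)$ and, by hypothesis, $\rorth(\bE_\lambda)\subseteq\bM$, we get $\bM=\rorth(\bE_\lambda)$, hence $\bE=\lorth(\bM)=\lorth\rorth(\bE_\lambda)$. Thus $\bM$ is detected by the essentially small class $\bE_\lambda$, which is what lets $\lambda$-presentable data control the whole system.

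Next I would introduce the category $\mathcal{C}_f$ whose objects are triples $(B,e,p)$ with $B$ $\lambda$-presentable, $e\colon A\to B$ in $\bE$, and $p\colon B\to X$ satisfying $pe=f$, and whose morphisms $(B,e,p)\to(B',e',p')$ are maps $t\colon B\to B'$ with $te=e'$ and $p't=p$. There is an evident functor $\mathcal{C}_f\to A/\A$ sending $(B,e,p)$ to $e\colon A\to B$, and the theorem amounts to showing this diagram is $\lambda$-filtered with colimit $f$. For $\lambda$-filteredness, nonemptiness is immediate because $(A,\id_A,f)$ is an object. Given fewer than $\lambda$ objects and morphisms between them, I would form the colimit $P$ of the underlying $<\lambda$-diagram in the coslice $A/\A$: as a $<\lambda$-colimit of $\lambda$-presentable objects it is again $\lambda$-presentable, the induced $e\colon A\to P$ lies in $\bE$ because $\bE=\lorth(\bM)$ is closed under colimits in $A/\A$ (a compatible family of diagonal fillers against any $m\in\bM$ is assembled from the fillers for each $e_\alpha$ by uniqueness), and the $p_\alpha$ glue to $p\colon P\to X$ with $pe=f$. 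This produces an object of $\mathcal{C}_f$ receiving all the given ones.

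Finally I would identify the colimit. Since $\mathcal{C}_f$ is $\lambda$-filtered, hence connected, its colimit in $A/\A$ is computed on underlying objects: put $\bar B:=\Colim{\mathcal{C}_f}B$ with induced $\bar e\colon A\to\bar B$ and $\bar p\colon\bar B\to X$, so $\bar p\,\bar e=f$. Closure of $\bE$ under $\lambda$-filtered colimits gives $\bar e\in\bE$. The crux is to show $\bar p\in\bM$, for which it suffices, by the identity of the first paragraph, to verify the right lifting property against each $d\colon C\to D$ in $\bE_\lambda$. Given such a square, $C$ is $\lambda$-presentable so the top map factors through some stage $(B_0,e_0,p_0)$ via $c'\colon C\to B_0$; pushing $d$ out along $c'$ yields an object $(B_0+_C D,\,e_1,\,p_1)$ of $\mathcal{C}_f$ — here $B_0+_C D$ is $\lambda$-presentable, and $e_1\in\bE$ because it is a pushout of $d\in\bE$ postcomposed with $e_0$ — whose colimit coprojection supplies the desired diagonal; uniqueness follows since $\bE$ consists of epimorphisms. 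Then $f=\bar p\,\bar e$ is an $(\bE,\bM)$-factorization of the $\bE$-morphism $f$, so $\bar p$ is an isomorphism and $f\cong\bar e=\Colim{\mathcal{C}_f}e$ in $A/\A$, as required.

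The main obstacle throughout is the tension between keeping the codomains $\lambda$-presentable and keeping the approximating maps inside $\bE$: factoring a map through a $\lambda$-presentable object does not preserve membership in $\bE$, so one cannot simply approximate the codomain of a single $\bE$-map after the fact. This is resolved by the pushout step, which stays inside $\lambda$-presentable objects \emph{and} inside $\bE$ precisely because the generators $\bE_\lambda$ already lie between $\lambda$-presentable objects; everything then rests on the two closure properties of the left class $\lorth(\bM)$ (under colimits in $A/\A$ and under pushouts) together with the identity $\bM=\rorth(\bE_\lambda)$.
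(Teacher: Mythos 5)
Your proposal is correct and follows essentially the same route as the paper: both realize $f$ as the $\lambda$-filtered colimit over all factorizations of $f$ through $\bE$-morphisms with $\lambda$-presentable codomain, and both establish that the comparison map into $X$ lies in $\bM$ by testing against $\bE_\lambda$ and using the pushout of a test map along a stage (which stays $\lambda$-presentable and in $\bE$) to produce the diagonal filler. The only differences are bookkeeping: you record the factoring map $p$ as part of the objects and verify $\lambda$-filteredness explicitly, and you get $\bar e\in\bE$ from closure of the left class under colimits where the paper instead deduces $g\in\bE$ from $\bM$-extremality of $f$.
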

\begin{proof}
    Let $e_I:A\to Y_I\,(I\in\bI)$ be the family of all morphisms belonging to $\bE$ which are factored through by $f$ and whose codomain is $\lambda$-presentable.
    Since $\bI$ yields a $\lambda$-filtered essentially small category, 
    there is a colimit $(e,Y)$ of $(e_I,Y_I)_{I\in\bI}$ in the coslice category $A/\A$.
    By the universality of the colimit $(e,Y)$, $f$ has a factorization $f=ge$ as follows:\vspace{-1ex}
    \begin{equation*}
        \begin{tikzcd}
            A\arrow[d,"e_I"']\arrow[rd,"e"description]\arrow[r,"f"] & X \\
            Y_I\arrow[r,"\kappa_I"'] & Y\arrow[u,dashed,"g"']
        \end{tikzcd}\incat{\A},\vspace{-1ex}
    \end{equation*}
    where $\kappa_I$ is the coprojection of the colimit.
    Since $\bE$ coincides with the $\bM$-extremals by \cref{thm:PPFS}, $f\in\bE$ implies $g\in\bE$.

    To prove that $g$ is an isomorphism, it suffices to show $g\in\bM$.
    To show this, take the following commutative square arbitrarily:\vspace{-1ex}
    \begin{equation}\label{eq:comm_square_dg}
        \begin{tikzcd}
            B\arrow[d,"\bE\ni\,d"']\arrow[r,"u"] & Y\arrow[d,"g"] \\
            C\arrow[r,"v"'] & X
        \end{tikzcd}\incat{\A}.\vspace{-1ex}
    \end{equation}
    We have to construct a unique diagonal filler for the above \cref{eq:comm_square_dg}.
    Since $(\bE,\bM)$ is a $\lambda$-presentable proper factorization system, the uniqueness always holds and we can assume that $B$ and $C$ are $\lambda$-presentable.
    Now $u$ has a factorization $u=\kappa_I u'$ for some $I\in\bI$ because $B$ is $\lambda$-presentable.
    Take a pushout $Z$ of $d$ and $u'$, and consider the following canonical morphism $h:Z\to X$:
    \begin{equation*}
        \begin{tikzcd}
            B\arrow[rd,pos=0.95,bend right=20,phantom,"\ulcorner"]\arrow[rr,shift left=4,"u"]\arrow[dd,"d"']\arrow[r,"u'"'] & Y_I\arrow[r,"\kappa_I"']\arrow[d,"\rho"] & Y\arrow[dd,"g"] \\
            & Z\arrow[rd,dashed,"h"] & \\
            C\arrow[ru]\arrow[rr,"v"'] & & X
        \end{tikzcd}\incat{\A}.
    \end{equation*}
    Since $B$, $C$, and $Y_I$ are $\lambda$-presentable, $Z$ is also $\lambda$-presentable.
    Now $d\in\bE$ implies $\rho\in\bE$.
    Thus, 
    \[
    A\longarr{e_I}Y_I\longarr{\rho}Z
    \]
    is a morphism belonging to $\bE$ which is factored through by $f$.
    Therefore, there exists $J\in\bI$ satisfying $Z=Y_J$ and $\rho e_I=e_J$.
    Then the following diagram commutes:
    \begin{equation*}
        \begin{tikzcd}
            B\arrow[rd,pos=0.95,bend right=20,phantom,"\ulcorner"]\arrow[rr,shift left=4,"u"]\arrow[dd,"d"']\arrow[r,"u'"'] & Y_I\arrow[r,"\kappa_I"']\arrow[d,"\rho"] & Y \\
            & Y_J\arrow[ur,"\kappa_J"'] & \\
            C\arrow[ru] & &
        \end{tikzcd}\incat{\A}.
    \end{equation*}
    Since $d$ is an epimorphism, we have constructed a diagonal filler for the square \cref{eq:comm_square_dg}.
\end{proof}

%-----------------------------------------------------------------------------
\subsection{Closed monomorphisms}
%-----------------------------------------------------------------------------

\begin{remark}
    Let $\bT$ be a partial Horn theory over an $S$-sorted signature $\Sigma$.
    Then, for every morphism $h:A\to B$ in $\bT\-\PMod$, the following are equivalent:
    \begin{enumerate}
        \item
        $h$ is a monomorphism in $\bT\-\PMod$,
        \item
        $h_s:A_s\to B_s$ is injective for every sort $s\in S$.
    \end{enumerate}
    Thus, a subobject in $\bT\-\PMod$ is just a submodel.
\end{remark}

\begin{definition}
    Let $\bT$ be a partial Horn theory over an $S$-sorted signature $\Sigma$.
    \begin{enumerate}
        \item
        A monomorphism $A\hookrightarrow B$ in $\bT\-\PMod$ is called \emph{$\bT$-closed} (or \emph{$\Sigma$-closed}) if the following diagrams form pullback squares for any $f,R\in\Sigma$.
        \begin{equation*}
        \begin{tikzcd}
            \mathrm{Dom}(\intpn{f}{A})\arrow[d,hook']\arrow[r,hook]\arrow[rd,pos=0.1,phantom,"\lrcorner"] &[-10pt] A_{s_1}\times\dots\times A_{s_n}\arrow[d,hook'] \\
            \mathrm{Dom}(\intpn{f}{B})\arrow[r,hook] & B_{s_1}\times\dots\times B_{s_n}
        \end{tikzcd}
        \quad\quad
        \begin{tikzcd}
            \intpn{R}{A}\arrow[d,hook']\arrow[r,hook]\arrow[r,hook]\arrow[rd,pos=0.1,phantom,"\lrcorner"] &[-10pt] A_{s_1}\times\dots\times A_{s_n}\arrow[d,hook'] \\
            {\intpn{R}{B}} \arrow[r,hook] & B_{s_1}\times\dots\times B_{s_n}
        \end{tikzcd}
        \end{equation*}
        %-------------------------------------------------------------------------------------------
        \item
        A morphism $h:A\to B$ in $\bT\-\PMod$ is called \emph{$\bT$-dense} (or \emph{$\Sigma$-dense}) if $h$ factors through no $\bT$-closed proper subobject of $B$.
    \end{enumerate}
\end{definition}

$\bT$-closed monomorphisms are the so-called embeddings in model theory and play an important role in our generalized Birkhoff's theorem.
Our term ``$\bT$-closed'' is based on \cite{burmeister2002lecture}.

\begin{remark}\label{rem:explaination_closedmono_dense}
    Let $\bT$ be a partial Horn theory over an $S$-sorted signature $\Sigma$.
    \begin{enumerate}
        \item
        $\bT$-closedness of a submodel $A\subseteq B$ in $\bT\-\PMod$ is equivalent to saying that given a family $\tup{a}$ of elements of $A$, if $\intpn{f}{B}(\tup{a})$ is defined for a function symbol $f$, then $\intpn{f}{A}(\tup{a})$ is also defined, and if $\tup{a}\in\intpn{R}{B}$ holds for a relation symbol $R$, then $\tup{a}\in\intpn{R}{A}$ also holds.
        \item
        Let $C\subseteq B$ be a subobject of $B\in\bT\-\PMod$ in $\Set^S$.
        Denote by $A$ the $S$-sorted set of all elements of $B$ which can be written as $\intpn{\tup{x}.\tau}{B}(\tup{c})$ by a family $\tup{c}$ of elements of $C$ and a term $\tau$ over $\Sigma$.
        Then the $\Sigma$-structure of $B$ induces a $\Sigma$-structure on $A$, which makes $A$ the smallest $\bT$-closed submodel of $B$ containing $C$.
        This $A$ is called the \emph{$\bT$-closed submodel of $B$ generated by $C$}.
        \item\label{rem:explaination_closedmono_dense-3}
        A morphism $h:A\to B$ in $\bT\-\PMod$ is $\bT$-dense if and only if the $\bT$-closed submodel generated by the image of $h$ coincides with $B$.
    \end{enumerate}
\end{remark}

\begin{example}\label{eg:PHT_for_monoids}\quad
    \begin{enumerate}
        \item
        Let us define an ordinary partial Horn theory $\bT_\mon$ (over $\Sigma_\mon$) for monoids as follows:
        \begin{gather*}
            S:=\{*\},\quad \Sigma_\mon:=\{ e:1\to *,\quad \cdot:*\times *\to * \},
            \\
            \bT_\mon:=\left\{
            \begin{gathered}
                \top\seq{}e\defined,\quad \top\seq{x,y}x\cdot y\defined,\\
                \top\seq{x,y,z}(x\cdot y)\cdot z=x\cdot(y\cdot z),\\
                \top\seq{x}x\cdot e=x \wedge e\cdot x=x
            \end{gathered}
            \right\}.
        \end{gather*}
        We have $\bT_\mon\-\PMod\cong\Mon$, where $\Mon$ is the category of monoids.
        Then a $\bT_\mon$-closed subobject is just a submonoid, and a $\bT_\mon$-dense morphism is just a surjective homomorphism.
        %-----------------------------------------------------------------------------------------------------------------
        \item
        Let us define another partial Horn theory $\bT'_\mon$ (over $\Sigma'_\mon$) for monoids as follows:
        \begin{gather*}
            S:=\{*\},\quad \Sigma'_\mon:=\Sigma_\mon+\{ \bullet^{-1}:*\to * \},
            \\
            \bT'_\mon:=\bT_\mon+\left\{
            \begin{gathered}
                x^{-1}\defined\seq{x}x^{-1}\cdot x=e \wedge x\cdot x^{-1}=e,\\
                y\cdot x=e \wedge x\cdot y=e\seq{x,y}x^{-1}=y
            \end{gathered}
            \right\}.
        \end{gather*}
        A $\bT'_\mon$-model is just a monoid with the partial inverse function, and we also have $\bT'_\mon\-\PMod\cong\Mon$.
        Then a submonoid $\bN\subseteq\bZ$ is not $\bT'_\mon$-closed even though it is $\bT_\mon$-closed.
        Therefore closedness of monomorphisms depends on $\bT$.
    \end{enumerate}
\end{example}

\begin{theorem}\label{thm:dense_closedmono_factorization}
    Let $\bT$ be a partial Horn theory over an $S$-sorted signature $\Sigma$.
    Then a pair of the class of all $\bT$-dense morphisms and the class of all $\bT$-closed monomorphisms is a finitely presentable proper factorization system in $\bT\-\PMod$.
\end{theorem}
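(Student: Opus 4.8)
The plan is to realize the desired pair as the factorization system produced by \cref{thm:PPFS}, applied to the locally finitely presentable category $\bT\-\PMod$ (\cref{thm:PHT_characterize_lfp}) with $\lambda=\aleph_0$. Under this reduction it suffices to exhibit a class $\Lambda$ of epimorphisms between finitely presentable objects for which the associated $\bM$ (by construction $\mathbf{Mono}\cap\rorth(\Lambda)$) is exactly the class of $\bT$-closed monomorphisms. The class $\bE$ produced by \cref{thm:PPFS} is the class of $\bM$-extremal morphisms, and once $\bM$ is identified with the $\bT$-closed monomorphisms this is \emph{verbatim} the definition of $\bT$-dense morphisms. So everything collapses to a single identification.

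For the generating class I would take the atomic ``definedness'' and ``relation'' epimorphisms
\[
\Lambda:=\{\, \repn{\tup{x}}_\bT\colon\repn{\tup{x}.\top}_\bT\to\repn{\tup{x}.f(\tup{x})\defined}_\bT \mid f\in\Sigma_\mathrm{f} \,\}\cup\{\, \repn{\tup{x}}_\bT\colon\repn{\tup{x}.\top}_\bT\to\repn{\tup{x}.R(\tup{x})}_\bT \mid R\in\Sigma_\mathrm{r} \,\},
\]
where for each symbol the context $\tup{x}$ is the one prescribed by its arity. Every domain and codomain here is finitely presentable by \cref{thm:repn_enumerates_fpobjects}, and each morphism is an epimorphism by \cref{prop:validity_for_PHL} (using that $\top\wedge f(\tup{x})\defined$ has the same interpretation as $f(\tup{x})\defined$, so the codomains are as written); moreover $\Lambda$ is a genuine set. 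Hence \cref{thm:PPFS} applies and delivers a finitely presentable proper factorization system $(\bE,\bM)$.

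The heart of the argument is to check, for a monomorphism $m\colon A\to B$, that $m\in\rorth(\Lambda)$ is equivalent to $\bT$-closedness. I would translate lifting problems into elementwise statements via \cref{prop:repn_obj_represents_intpn}: a commutative square against $\repn{\tup{x}}_\bT\colon\repn{\tup{x}.\top}_\bT\to\repn{\tup{x}.f(\tup{x})\defined}_\bT$ is precisely the datum of a tuple $\tup{a}$ over $A$ together with the fact that $\intpn{f}{B}(m\tup{a})$ is defined (commutativity forces the tuple over $B$ to be $m\tup{a}$), while a diagonal filler is precisely a witness that $\intpn{f}{A}(\tup{a})$ is defined, with $\repn{\tup{x}}_\bT$ epic forcing agreement with $\tup{a}$. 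Uniqueness of the filler is automatic since $m$ is monic. Quantifying over all such squares shows that $m$ is right-orthogonal to the $f$-generator exactly when $\tup{a}\in\mathrm{Dom}(\intpn{f}{A})$ whenever $m\tup{a}\in\mathrm{Dom}(\intpn{f}{B})$, i.e.\ exactly when the domain square for $f$ is a pullback; the relation symbols are handled identically with $\intpn{R}{\bullet}$ in place of $\mathrm{Dom}(\intpn{f}{\bullet})$. Therefore $\mathbf{Mono}\cap\rorth(\Lambda)$ is the class of $\bT$-closed monomorphisms, which is the $\bM$ of \cref{thm:PPFS}, and the matching definition then gives $\bE=\{\bT\text{-dense morphisms}\}$.

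The main obstacle is exactly this elementwise translation: one must carefully reconcile the commutativity of each lifting square (read through \cref{prop:repn_obj_represents_intpn}, which pins the tuple in $A$ to the prescribed tuple in $B$) with the pullback condition defining $\bT$-closedness, and do so in both directions and uniformly across function and relation symbols. Everything else is bookkeeping: the factorization-system axioms, properness, and finite presentability are all supplied by \cref{thm:PPFS} once the two classes have been correctly identified.
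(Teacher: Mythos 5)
Your proposal is correct and follows essentially the same route as the paper: the paper's proof takes exactly the same generating class $\Lambda$ of atomic definedness/relation epimorphisms and invokes \cref{thm:PPFS}, asserting (without the elementwise detail you supply) that $\bT$-closedness of a monomorphism is equivalent to membership in $\rorth(\Lambda)$. Your careful translation of lifting squares via \cref{prop:repn_obj_represents_intpn} just fills in the step the paper leaves to the reader.
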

\begin{proof}
    Denote by $\Lambda$ the class of morphisms in $\bT\-\PMod$ consisting of the following:
    \begin{itemize}
        \item
        A morphism $\repn{\tup{x}.\top}_\bT\longarr{\repn{\tup{x}}_\bT}\repn{\tup{x}.f(\tup{x})\defined}_\bT$ for each function symbol $f\in\Sigma$,
        \item
        A morphism $\repn{\tup{x}.\top}_\bT\longarr{\repn{\tup{x}}_\bT}\repn{\tup{x}.R(\tup{x})}_\bT$ for each relation symbol $R\in\Sigma$.
    \end{itemize}
    Then $\bT$-closedness of a monomorphism $m$ is equivalent to whether $m$ belongs to $\rorth(\Lambda)$.
    Since $\Lambda$ is a small class of epimorphisms between finitely presentable objects, the statement follows from \cref{thm:PPFS}.
\end{proof}

\begin{lemma}\label{lem:dense_between_fp}
    Let $\bT$ be a partial Horn theory over an $S$-sorted signature $\Sigma$.
    Then every $\bT$-dense morphism between finitely presentable objects has the following expression:
    \begin{equation*}
        \repn{\tup{x}.\phi}_\bT \longarr{\repn{\tup{x}}_\bT} \repn{\tup{x}.\psi}_\bT
    \end{equation*}
\end{lemma}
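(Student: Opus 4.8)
The plan is to combine the description of finitely presentable objects as representing models (\cref{thm:repn_enumerates_fpobjects}) with the characterization of $\bT$-denseness via generated $\bT$-closed submodels (\cref{rem:explaination_closedmono_dense}). Let $h:\repn{\tup{x}.\phi}_\bT\to B$ be a $\bT$-dense morphism between finitely presentable objects, where $\tup{x}=(x_1{:}s_1,\dots,x_n{:}s_n)$. Using \cref{thm:repn_enumerates_fpobjects} I would write $B=\repn{\tup{y}.\psi_0}_\bT$ for some $\tup{y}=(y_1{:}t_1,\dots,y_m{:}t_m)$, and using \cref{cor:morphism_between_fp} write $h=\repn{\tup{\tau}}_\bT$ for $\bT$-terms $\tau_i$ generated by $\tup{y}.\psi_0$; thus $b_i:=h([x_i]_\bT)=[\tau_i]_\bT\in B_{s_i}$ and $\psi_0\seq{\tup{y}}\phi(\tup{\tau}/\tup{x})$ is a PHL-theorem. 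Since $h([\sigma]_\bT)=\intpn{\sigma}{B}(\tup{b})$ for every $\bT$-term $\sigma$, the image of $h$ lies in the $\bT$-closed submodel of $B$ generated by $\{b_1,\dots,b_n\}$.

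First I would use density. As $h$ factors through the $\bT$-closed submodel generated by $\tup{b}$, \cref{rem:explaination_closedmono_dense}\ref{rem:explaination_closedmono_dense-3} forces that submodel to be all of $B$. By the explicit description of generated closed submodels in \cref{rem:explaination_closedmono_dense}, each generator $[y_j]_\bT$ is therefore of the form $\intpn{\rho_j}{B}(\tup{b})=[\rho_j(\tup{\tau}/\tup{x})]_\bT$ for some $\bT$-term $\rho_j$ in context $\tup{x}$; equivalently, $\psi_0\seq{\tup{y}}\rho_j(\tup{\tau}/\tup{x})=y_j$ is a PHL-theorem for each $j$.

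Next I would produce the formula. Set
\[
\psi:=\phi\wedge\psi_0(\tup{\rho}/\tup{y})\wedge\bigwedge_{1\le i\le n}\bigl(x_i=\tau_i(\tup{\rho}/\tup{y})\bigr),
\]
a Horn formula in context $\tup{x}$; since $\psi\seq{\tup{x}}\phi$ holds, \cref{cor:morphism_between_fp} makes $\repn{\tup{x}}_\bT:\repn{\tup{x}.\phi}_\bT\to\repn{\tup{x}.\psi}_\bT$ a legitimate morphism. By representability of interpretations (\cref{prop:repn_obj_represents_intpn}) it suffices to exhibit, naturally in the $\bT$-model $M$, a bijection $\intpn{\tup{x}.\psi}{M}\cong\intpn{\tup{y}.\psi_0}{M}$. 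I would define $\Phi:\tup{d}\mapsto(\intpn{\rho_j}{M}(\tup{d}))_j$ and $\Psi:\tup{c}\mapsto(\intpn{\tau_i}{M}(\tup{c}))_i$, and verify via the substitution lemma for interpretations and the two PHL-theorems above that they are well-defined and mutually inverse. Here the conjunct $\psi_0(\tup{\rho}/\tup{y})$ ensures $\Phi$ lands in $\intpn{\tup{y}.\psi_0}{M}$, the two PHL-theorems ensure $\Psi$ lands in $\intpn{\tup{x}.\psi}{M}$ and give $\Phi\circ\Psi=\id$, and the conjunct $x_i=\tau_i(\tup{\rho}/\tup{y})$ gives $\Psi\circ\Phi=\id$. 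This yields an isomorphism $k:\repn{\tup{x}.\psi}_\bT\to B$ with $k([x_i]_\bT)=b_i$, so $k\circ\repn{\tup{x}}_\bT$ and $h$ agree on the generators $[x_i]_\bT$ and hence coincide; this exhibits $h$ in the claimed form.

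The only real obstacle is choosing $\psi$ correctly. The naive candidate $\phi\wedge\psi_0(\tup{\rho}/\tup{y})$ already yields well-defined $\Phi,\Psi$ with $\Phi\circ\Psi=\id$, but $\Psi\circ\Phi=\id$ genuinely fails without the extra equations $x_i=\tau_i(\tup{\rho}/\tup{y})$, which encode that passing from the generators $\tup{y}$ to the generators $\tup{b}$ and back is the identity. Everything else is routine but notationally heavy bookkeeping of iterated term substitutions and their interpretations, handled by the substitution lemmas for PHL-theorems and for $\intpn{-}{M}$.
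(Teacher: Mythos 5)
Your overall strategy is the same as the paper's: write $h=\repn{\tup{\tau}}_\bT$, use density to produce terms $\rho_j$ with $\psi_0\seq{\tup{y}}y_j=\rho_j(\tup{\tau}/\tup{x})$ a PHL-theorem, and exhibit the codomain as $\repn{\tup{x}.\psi}_\bT$ for a Horn formula $\psi$ in context $\tup{x}$. The gap is in the choice of $\psi$: you must also add the conjunct $\bigwedge_{1\le j\le m}\rho_j\defined$, and your assertion that the naive candidate ``already yields well-defined $\Phi$'' is false. The $\rho_j$ supplied by \cref{rem:explaination_closedmono_dense} are only raw terms over $\Sigma$ whose interpretation happens to be defined at the single point $h([\tup{x}]_\bT)$ of $B$; nothing makes $\psi\seq{\tup{x}}\rho_j\defined$ a PHL-theorem. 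Your conjunct $\psi_0(\tup{\rho}/\tup{y})$ forces $\rho_j\defined$ only for those $j$ with $y_j$ occurring in $\psi_0$ (in general $\intpn{\tup{x}.\psi_0(\tup{\rho}/\tup{y})}{M}$ is strictly larger than the set of $\tup{d}$ for which all $\intpn{\rho_j}{M}(\tup{d})$ are defined with $(\intpn{\rho_j}{M}(\tup{d}))_j\in\intpn{\tup{y}.\psi_0}{M}$), and the conjuncts $x_i=\tau_i(\tup{\rho}/\tup{y})$ force it only for $y_j$ occurring in some $\tau_i$. If some $y_j$ occurs in neither, $\Phi$ need not be well defined and $\repn{\tup{x}.\psi}_\bT$ need not be isomorphic to $B$. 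This is precisely why the paper's formula $\chi$ carries the extra conjunct $\bigwedge_j\sigma_j\defined$.

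Concretely: take sorts $s,t$, one relation symbol $Q:s$, one function symbol $k:s\to t$, and $\bT:=\{\,Q(z)\seq{z{:}s,\;w{:}t}k(z)\defined,\quad \top\seq{w,w'{:}t}w=w'\,\}$. Let $\phi:=Q(x)$ in context $(x{:}s)$ and $\psi_0:=Q(y_1)$ in context $(y_1{:}s,y_2{:}t)$. Because of the dummy variable $w{:}t$ in the first axiom, $Q(y_1)\seq{y_1,y_2}k(y_1)\defined$ is a PHL-theorem, while $Q(x)\seq{x}k(x)\defined$ is not (refuted by a model with $\intpn{Q}{M}\neq\varnothing$ and $M_t=\varnothing$). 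Hence $B:=\repn{(y_1,y_2).Q(y_1)}_\bT$ has $B_t=\{[y_2]\}=\{[k(y_1)]\}$, the morphism $h=\repn{y_1}_\bT:\repn{x.Q(x)}_\bT\to B$ is $\bT$-dense, $\tau_1=y_1$, $\rho_1=x$, $\rho_2=k(x)$, and $y_2$ occurs in neither $\psi_0$ nor $\tau_1$. Your $\psi$ is then equivalent to $Q(x)$, so $\repn{\tup{x}.\psi}_\bT=\repn{x.Q(x)}_\bT$ has empty $t$-part and is not isomorphic to $B$, whereas the paper's $\chi$ is equivalent to $Q(x)\wedge k(x)\defined$ and does represent $B$. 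Adding $\bigwedge_j\rho_j\defined$ to your $\psi$ repairs the argument, after which the rest of your verification goes through.
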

\begin{proof}
    By \cref{cor:morphism_between_fp}, a morphism between finitely presentable objects has the following expression:
    \begin{equation}\label{eq:dense_between_fp_tau}
        \repn{\tup{x}.\phi}_\bT \longarr{\repn{\tup{\tau}}_\bT} \repn{\tup{y}.\psi}_\bT,
    \end{equation}
    where $\tup{x}.\phi$ and $\tup{x}.\psi$ are Horn formulas-in-context over $\Sigma$ with $\tup{x}=(x_1,\dots,x_n)$ and $\tup{y}=(y_1,\dots,y_m)$.
    Suppose the morphism \cref{eq:dense_between_fp_tau} is $\bT$-dense.
    By \cref{rem:explaination_closedmono_dense}\ref{rem:explaination_closedmono_dense-3}, 
    For each $1\le j\le m$, we can take a term-in-context $\tup{x}.\sigma_j$ satisfying $[y_j]_\bT=[\sigma_j(\tup{\tau}/\tup{x})]_\bT$ in $\repn{\tup{y}.\psi}_\bT$, 
    i.e., the following is a PHL-theorem of $\bT$:
    \begin{equation*}
        \psi \seq{\tup{y}} y_j=\sigma_j(\tup{\tau}/\tup{x}).
    \end{equation*}
    Denote by $\chi$ the following Horn formula:
    \begin{equation*}
        \chi:=\quad \psi(\tup{\sigma}/\tup{y})\wedge\bigwedge_{1\le i\le n}x_i=\tau_i(\tup{\sigma}/\tup{y})\wedge\bigwedge_{1\le j\le m}\sigma_j\defined.
    \end{equation*}

    Now it is easy to check that the following are PHL-theorems of $\bT$:
    \begin{gather}
        \psi \seq{\tup{y}} \bigwedge_{1\le i\le n}\tau_i\defined,
        \quad\quad
        \psi \seq{\tup{y}} \chi(\tup{\tau}/\tup{x});
        \label{eq:welldefinedness_tau}
        \\
        \chi \seq{\tup{x}} \bigwedge_{1\le j\le m}\sigma_j\defined,
        \quad\quad
        \chi \seq{\tup{x}} \psi(\tup{\sigma}/\tup{y});
        \label{eq:welldefinedness_sigma}
        \\
        \psi\seq{\tup{y}}\bigwedge_{1\le j\le m}y_j=\sigma_j(\tup{\tau}/\tup{x}),
        \quad\quad
        \chi\seq{\tup{x}}\bigwedge_{1\le i\le n}x_i=\tau_i(\tup{\sigma}/\tup{y});
        \label{eq:isomorphism_sigma_tau}
        \\
        \chi\seq{\tup{x}}\phi.
        \label{eq:phi_implies_chi}
    \end{gather}
    \cref{eq:welldefinedness_sigma,eq:welldefinedness_tau} being PHL-theorems of $\bT$ implies the well-definedness of the following morphisms:
    \begin{equation}\label{eq:mor_sigma_tau}
        \begin{tikzcd}
            \repn{\tup{x}.\chi}_\bT & \repn{\tup{y}.\psi}_\bT
            \arrow[from=1-2,to=1-1,shift left=2,"\repn{\tup{\sigma}}_\bT"]
            \arrow[from=1-1,to=1-2,shift left=2,"\repn{\tup{\tau}}_\bT"]
        \end{tikzcd}
    \end{equation}
    Since \cref{eq:isomorphism_sigma_tau} is a PHL-theorem of $\bT$, two morphisms in \cref{eq:mor_sigma_tau} are inverse of each other.
    \cref{eq:phi_implies_chi} yields a morphism $\repn{\tup{x}.\phi}_\bT\longarr{\repn{\tup{x}}_\bT}\repn{\tup{x}.\chi}_\bT$, and we have the following commutative diagram:
    \begin{equation*}
        \begin{tikzcd}
            &[-20pt] \repn{\tup{x}.\phi}_\bT &[-20pt] \\
            \repn{\tup{x}.\chi}_\bT & & \repn{\tup{y}.\psi}_\bT
            \arrow[from=1-2,to=2-1,"\repn{\tup{x}}_\bT"']
            \arrow[from=1-2,to=2-3,"\repn{\tup{\tau}}_\bT"]
            \arrow[from=2-1,to=2-3,"\repn{\tup{\tau}}_\bT"',"\cong"]
        \end{tikzcd}
    \end{equation*}
    This completes the proof.
\end{proof}

%-----------------------------------------------------------------------------
\subsection{Birkhoff's variety theorem for relative algebraic theories}
%-----------------------------------------------------------------------------
\begin{lemma}[\cite{baron1969reflectors}]\label{lem:epi-reflective}
    Let $\A\subseteq\B$ be a replete full subcategory of a locally small category $\B$ with products.
    Let $\bE$ be a class of epimorphisms in $\B$.
    Assume that for every $B\in\B$, the class
    \[
    B/_\bE\A:=\{ q:B\to\cod q \mid q\in\bE\text{ and }\cod q\in\A \}
    \]
    is essentially small.
    Then the following are equivalent:
    \begin{enumerate}
        \item\label{lem:epi-reflective-1}
        $\A\subseteq\B$ is $\bE$-reflective, i.e., 
        the inclusion $\A\hookrightarrow\B$ has a left adjoint and all components of its unit belong to $\bE$.
        \item\label{lem:epi-reflective-2}
        \begin{itemize}
            \item
            $\A\subseteq\B$ is closed under products, and
            \item
            every morphism $f:B\to A$ in $\B$ with $A\in\A$ factors through a morphism $e:B\to A'$ with $A'\in\A$ belonging to $\bE$.
            \begin{equation*}
                \begin{tikzcd}
                    B\arrow[rr,"f"]\arrow[rd,"\exists e"',dashed] && A \\
                    & \exists A'\arrow[ur,"\exists"',dashed] &
                \end{tikzcd}
            \end{equation*}
        \end{itemize}
    \end{enumerate}
\end{lemma}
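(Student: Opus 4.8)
The plan is to prove the two implications separately, with $(ii)\Rightarrow(i)$ carrying essentially all of the work. For $(i)\Rightarrow(ii)$, I would assume that $\A\hookrightarrow\B$ has a left adjoint $L$ with unit $\eta$ whose components lie in $\bE$. Closure under products is immediate: the inclusion, being a right adjoint, preserves all limits existing in $\A$, so for a family $(A_i)_i$ in $\A$ the product $\prod_i A_i$ formed in $\B$ is also a product in $\A$, whence $\prod_i A_i\in\A$ by repleteness. For the factorization clause, given $f\colon B\to A$ with $A\in\A$, the universal property of the unit supplies the unique $\bar f\colon LB\to A$ with $\bar f\circ\eta_B=f$; since $\eta_B\in\bE$ and $LB\in\A$, this is the desired factorization.

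For $(ii)\Rightarrow(i)$ I would build the reflection of each $B\in\B$ by hand. Using the essential-smallness hypothesis, choose a set $\{q_i\colon B\to A_i\}_{i\in I}$ of representatives for $B/_\bE\A$, form the product $P:=\prod_{i\in I}A_i$ (which lies in $\A$ by closure under products), and let $\langle q_i\rangle\colon B\to P$ be the induced morphism. Applying the factorization clause of $(ii)$ to $\langle q_i\rangle$ yields a factorization $B\xrightarrow{e}A'\xrightarrow{m}P$ with $e\in\bE$ and $A'\in\A$. I claim that $e\colon B\to A'$ is a reflection of $B$ into $\A$, which would simultaneously produce the left adjoint and exhibit its unit as lying in $\bE$.

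To verify the universal property, take any $g\colon B\to A$ with $A\in\A$. By the factorization clause, $g$ factors as $B\xrightarrow{e'}A''\xrightarrow{g''}A$ with $e'\in\bE$ and $A''\in\A$, so $e'$ represents an element of $B/_\bE\A$ and is therefore isomorphic to some $q_j$; composing $g''$ with the relevant isomorphism and the projection $\pi_j\colon P\to A_j$ then produces $h\colon A'\to A$ with $h\circ e=g$, using $\pi_j\circ m\circ e=\pi_j\circ\langle q_i\rangle=q_j$. Uniqueness of $h$ comes for free, since $e\in\bE$ is an epimorphism: $h\circ e=g=h'\circ e$ forces $h=h'$. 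This establishes $e$ as the required reflection, with unit in $\bE$.

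The main obstacle is precisely the $(ii)\Rightarrow(i)$ construction, and within it the role of the factorization clause. The naive candidate $\langle q_i\rangle\colon B\to P$ need neither lie in $\bE$ nor be an epimorphism, so on its own it yields neither membership of the unit in $\bE$ nor the uniqueness half of the universal property; factoring it as $m\circ e$ repairs both at once. The essential-smallness hypothesis on $B/_\bE\A$ is exactly what guarantees that $P$ exists as a genuine, set-indexed product, so that the argument does not run into size issues.
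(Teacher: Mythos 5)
Your proof is correct and follows essentially the same route as the paper's: for $(ii)\Rightarrow(i)$ you form the product of a small skeleton of $B/_\bE\A$, factor the induced morphism into it via the factorization clause of $(ii)$, and verify that the $\bE$-part is the reflection, with uniqueness coming from $e$ being an epimorphism. The only quibble is in $(i)\Rightarrow(ii)$ (which the paper dismisses as immediate): closure under products does not follow from the inclusion preserving limits as you phrase it, but from the standard retraction argument showing that a reflective subcategory is closed under whatever limits exist in the ambient category --- a cosmetic fix.
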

\begin{proof}
    {[\ref{lem:epi-reflective-1}$\implies$\ref{lem:epi-reflective-2}]}
    Immediate.

    {[\ref{lem:epi-reflective-2}$\implies$\ref{lem:epi-reflective-1}]}
    Fix an object $B\in\B$.
    Since the class $B/_\bE\A$ is essentially small, we can take a small skeleton $\{ B\arr{q_i}A_i \}_{i\in I}$ of $B/_\bE\A$.
    Taking a canonical morphism $(q_i)_i:B\to\prod_{i}A_i$, we have $\prod_i A_i\in\A$ by the assumption.
    Then the morphism $(q_i)_i$ factors through a morphism $\eta\in\bE$ with its codomain $rB\in\A$ as follows:
    \begin{equation*}
        \begin{tikzcd}
            B\arrow[rr,"(q_i)_i"]\arrow[rd,"\eta"',dashed] && \prod_{i\in I} A_i \\
            & rB\arrow[ur,"\exists"',dashed] &
        \end{tikzcd}
    \end{equation*}
    We proceed to show that $\eta$ is a reflection of $B$ into $\A$.
    Every morphism $f:B\to A$ with $A\in\A$ factors through $q_{i_0}$ for some $i_0\in I$.
    This provides the following commutative diagram:
    \begin{equation*}
        \begin{tikzcd}[column sep=large,row sep=large]
            B\arrow[rr,"f"]\arrow[d,"\eta"']\arrow[rd,"(q_i)_i"description]\arrow[rrd,"q_{i_0}"] & & A \\
            rB\arrow[r] & \prod_i A_i\arrow[r,"\text{projection}"'] & A_{i_0}\arrow[u,dashed]
        \end{tikzcd}
    \end{equation*}
    Therefore $f$ factors through $\eta$, and its factorization is unique since $\eta\in\bE$ is an epimorphism.
\end{proof}

\begin{corollary}\label{cor:E-reflective}
    Let $\A$ be a locally presentable category with a proper factorization system $(\bE,\bM)$.
    For any replete full subcategory $\E\subseteq\A$, the following are equivalent:
    \begin{enumerate}
        \item
        $\E\subseteq\A$ is $\bE$-reflective.
        \item
        $\E\subseteq\A$ is closed under products and $\bM$-subobjects.
    \end{enumerate}
\end{corollary}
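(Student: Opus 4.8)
The plan is to deduce this corollary from \cref{lem:epi-reflective} by instantiating that lemma with $\B:=\A$, with the subcategory ``$\A$'' of the lemma taken to be $\E$, and with its class ``$\bE$'' taken to be the left class $\bE$ of the proper factorization system $(\bE,\bM)$. First I would verify the hypotheses of \cref{lem:epi-reflective}. Since $\A$ is locally presentable it is locally small and complete, hence has products; and $\bE$ consists of epimorphisms precisely because $(\bE,\bM)$ is \emph{proper}. The only nonformal hypothesis is the essential smallness of $B/_\bE\E$ for each $B\in\A$: every member of this class is an epimorphism out of $B$, so $B/_\bE\E$ is a subclass of the quotient objects of $B$, which form a set because $\A$ is co-wellpowered (\cref{thm:basic_properties_of_lfp}). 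Thus \cref{lem:epi-reflective} applies, and $\E\subseteq\A$ is $\bE$-reflective if and only if it is closed under products and satisfies the factoring condition \ref{lem:epi-reflective-2}. It therefore remains to match that factoring condition against closure under $\bM$-subobjects.

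For the direction (ii)$\Rightarrow$(i), I would verify the factoring condition directly from the factorization system. Given any $f:B\to A$ with $A\in\E$, factor it as $f=me$ with $e\in\bE$ and $m\in\bM$. Then $m$ exhibits $\cod e$ as an $\bM$-subobject of $A\in\E$, so closure under $\bM$-subobjects forces $\cod e\in\E$; hence $f$ factors through $e\in\bE$ with codomain in $\E$, which is exactly the required factoring condition. Together with the assumed closure under products, \cref{lem:epi-reflective} then yields $\bE$-reflectivity.

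For the direction (i)$\Rightarrow$(ii), closure under products is immediate from \cref{lem:epi-reflective}. The substantive point is closure under $\bM$-subobjects, which I would argue straight from reflectivity rather than from the factoring condition. Let $m:C\to A$ lie in $\bM$ with $A\in\E$, and let $\eta_C:C\to rC$ be the reflection unit, which lies in $\bE$. Since $A\in\E$, the map $m$ factors through the reflection as $m=g\eta_C$, producing a commutative square with $\eta_C$ on the left edge and $m$ on the right edge. Because $\eta_C\in\bE$ and $m\in\bM$, the orthogonality $\eta_C\perp m$ supplies a diagonal $d$ with $d\eta_C=\id_C$, so $\eta_C$ is a split monomorphism; being also an epimorphism, it is an isomorphism, whence $C\cong rC\in\E$ and so $C\in\E$ by repleteness.

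The main obstacle is not any single calculation but the recognition that the essential-smallness hypothesis of \cref{lem:epi-reflective}, which looks like a genuine side condition, is automatically supplied by co-wellpoweredness of locally presentable categories; once this is observed, both implications are short. Within them the only genuinely new ingredient is the split-monomorphism-plus-epimorphism argument in (i)$\Rightarrow$(ii), which is where the properness of $(\bE,\bM)$ and the assumption that the reflection unit lies in $\bE$ are jointly used.
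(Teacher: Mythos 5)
Your proposal is correct and follows essentially the same route as the paper, whose proof of this corollary is just the observation that locally presentable categories are co-wellpowered (supplying the essential-smallness hypothesis) followed by an appeal to \cref{lem:epi-reflective}. Your expansion of the two bridging steps --- obtaining the lemma's factoring condition from the $(\bE,\bM)$-factorization, and deducing closure under $\bM$-subobjects from $\eta_C\perp m$ plus properness --- is the standard argument the paper leaves implicit, and it is carried out correctly.
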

\begin{proof}
    Since every locally presentable category is co-wellpowered (see \cite[1.58 Theorem]{adamek1994locally}), this follows from \cref{lem:epi-reflective}.
\end{proof}

\begin{definition}
    Let $\rho:(S,\Sigma,\bS)\to (S',\Sigma',\bT)$ be a theory morphism between partial Horn theories.
    A \emph{$\rho$-relative judgment} is a Horn sequent $\phi^\rho\seq{\tup{x}^\rho}\psi$, where $\tup{x}.\phi$ is a Horn formula-in-context over $\Sigma$ and $\tup{x}^\rho.\psi$ is a Horn formula-in-context over $\Sigma'$.
\end{definition}

We can now formulate our main result.

\begin{theorem}\label{thm:birkhoff_partialHorn}
    Let $\rho:\bS\to\bT$ be a theory morphism between partial Horn theories.
    Then, for every replete full subcategory $\E\subseteq\bT\-\PMod$, the following are equivalent:
    \begin{enumerate}
        \item\label{thm:birkhoff_partialHorn-1}
        $\E$ is definable by $\rho$-relative judgments, i.e., 
        there exists a set $\bT'$ of $\rho$-relative judgments satisfying $\E=(\bT+\bT')\-\PMod$.
        \item\label{thm:birkhoff_partialHorn-2}
        $\E\subseteq\bT\-\PMod$ is closed under products, $\bT$-closed subobjects, $U^\rho$-retracts, and filtered colimits.
    \end{enumerate}
\end{theorem}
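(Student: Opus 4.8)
The plan is to work relative to the proper factorization system $(\bE,\bM)$ on $\bT\-\PMod$ furnished by \cref{thm:dense_closedmono_factorization}, where $\bE$ is the class of $\bT$-dense morphisms and $\bM$ that of $\bT$-closed monomorphisms, and to run a Birkhoff/HSP-style orthogonality argument against this system. For \ref{thm:birkhoff_partialHorn-1}$\Rightarrow$\ref{thm:birkhoff_partialHorn-2} I would check the four closure conditions directly. Closure under products is the usual computation that a Horn sequent valid in each factor is valid in the product; closure under $\bT$-closed subobjects follows because along a $\bT$-closed embedding $A\hookrightarrow B$ one has $\intpn{\tup{x}.\theta}{A}=\intpn{\tup{x}.\theta}{B}\cap\prod_i A_{s_i}$ for every Horn formula $\theta$ over $\Sigma'$, so validity is inherited; closure under filtered colimits follows from \cref{lem:orth_closed_under_filcolim}, since each $\rho$-relative judgment is orthogonality against an epimorphism with finitely presentable domain. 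The one step that uses the $\rho$-relative shape is closure under $U^\rho$-retracts: if $p\colon A\to B$ has a section $t$ of $U^\rho p$ in $\bS\-\PMod$ and $A$ satisfies $\phi^\rho \seq{\tup{x}} \psi$, then for $\tup{b}\in\intpn{\tup{x}.\phi^\rho}{B}=\intpn{\tup{x}.\phi}{U^\rho B}$ the $\bS$-homomorphism $t$ carries $\tup{b}$ into $\intpn{\tup{x}.\phi}{U^\rho A}$ precisely because $\phi$ is over $\Sigma$; then $A\models\psi$ on $t(\tup{b})$ and $p$ transports this back along $U^\rho p\circ t=\id$. Here it is essential that the premise be a $\rho$-translation, as only then is it preserved by the merely $\bS$-linear section $t$.

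For \ref{thm:birkhoff_partialHorn-2}$\Rightarrow$\ref{thm:birkhoff_partialHorn-1} I would first extract reflectivity: closure under products and under $\bT$-closed ($=\bM$-) subobjects lets \cref{cor:E-reflective} present $\E$ as a $\bE$-reflective subcategory, giving for each $M$ a $\bT$-dense unit $\eta_M\colon M\to rM$ with $rM\in\E$. Closure under filtered colimits together with \cref{thm:co-intersection} makes the reflector finitary, so reflection units of finitely presentable objects are $\bT$-dense maps that are coslice-filtered colimits of $\bT$-dense maps with finitely presentable codomain. I then take $\bT'$ to be the set of all $\rho$-relative judgments valid in every object of $\E$; the inclusion $\E\subseteq(\bT+\bT')\-\PMod$ is immediate, and the whole content lies in the reverse inclusion.

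So take $A\models\bT+\bT'$. Since $U^\rho$ preserves filtered colimits (they are computed sortwise by \cref{rem:filtered_colim_partial_model}) and $F^\rho$ preserves finitely presentable objects (its right adjoint $U^\rho$ being finitary), I write $U^\rho A$ as a filtered colimit of finitely presentable $\bS$-models $\repn{\tup{y}_k.\gamma_k}_\bS$, so that $F^\rho U^\rho A$ is a filtered colimit of the $\rho$-translated representing models $F^\rho\repn{\tup{y}_k.\gamma_k}_\bS\cong\repn{\tup{y}_k^\rho.\gamma_k^\rho}_\bT$. The counit $\epsilon_A\colon F^\rho U^\rho A\to A$ is a $U^\rho$-retraction by the triangle identity. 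I claim $A$ is orthogonal to the reflection unit $\eta\colon F^\rho U^\rho A\to r(F^\rho U^\rho A)\in\E$. Granting this, orthogonality produces a unique $q\colon r(F^\rho U^\rho A)\to A$ with $q\eta=\epsilon_A$; applying $U^\rho$ and composing with the adjunction unit shows $U^\rho q$ is split epi, so $q$ is a $U^\rho$-retraction out of an object of $\E$, and closure under $U^\rho$-retracts delivers $A\in\E$. To prove the claim I decompose $\eta$ through the finitely presentable stages $F^\rho\repn{\tup{y}_k.\gamma_k}_\bS$; by finitariness and \cref{thm:co-intersection} the reflection unit of each stage is a coslice-filtered colimit of $\bT$-dense maps into finitely presentable models, and by \cref{lem:dense_between_fp} each such map has the form $\repn{\tup{y}_k^\rho.\gamma_k^\rho}_\bT\longarr{\repn{\tup{y}_k^\rho}_\bT}\repn{\tup{y}_k^\rho.\delta}_\bT$. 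Its premise is exactly the $\rho$-translation $\gamma_k^\rho$, so by \cref{prop:validity_for_PHL} it encodes a $\rho$-relative judgment, and as its codomain lies in $\E$ this judgment belongs to $\bT'$; hence $A$ is orthogonal to every such map, and since all domains are finitely presentable this orthogonality passes to the filtered colimits defining $\eta$ exactly as in \cref{lem:orth_closed_under_filcolim}.

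The main obstacle is this last claim, namely that the reflection of $F^\rho U^\rho A$ is reachable through $\rho$-relative dense maps alone. Its proof rests on two finitariness reductions that must be handled with care: that the reflection unit of a finitely presentable object factors as a coslice-filtered colimit of $\bT$-dense maps with finitely presentable codomain (so that \cref{lem:dense_between_fp} applies and exposes the premise as a genuine $\rho$-translation $\gamma_k^\rho$), and that orthogonality against such a filtered family of arrows with finitely presentable domains is detected stagewise. Once these are secured, the $U^\rho$-retract closure converts the splitting of the counit into membership in $\E$, and the theorem follows; the remaining steps are the routine bookkeeping already indicated.
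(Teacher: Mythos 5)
Your proposal follows essentially the same route as the paper: the easy direction is handled by (equivalent) semantic computations where the paper argues via orthogonality and the dense/closed factorization system, and the hard direction runs exactly the paper's scheme — \cref{cor:E-reflective} to get a $\bT$-dense reflection, \cref{thm:co-intersection} plus \cref{lem:dense_between_fp} to resolve the reflection units of the finitely presentable free models $F^\rho\repn{\tup{x}.\phi}_\bS\cong\repn{\tup{x}^\rho.\phi^\rho}_\bT$ into maps of the form $\repn{\tup{x}^\rho}_\bT$, and the splitting of the counit $\epsilon_A$ combined with closure under $U^\rho$-retracts to conclude $A\in\E$. The only genuine difference is that you take $\bT'$ to be \emph{all} $\rho$-relative judgments valid throughout $\E$, whereas the paper takes only the judgments read off from the resolved reflection units; this is harmless but it shifts a burden onto you, and that is where your argument has a flaw.

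Namely, you justify that each extracted judgment $\gamma_k^\rho\seq{\tup{y}_k^\rho}\delta$ belongs to $\bT'$ by saying that the codomain $\repn{\tup{y}_k^\rho.\delta}_\bT$ of the corresponding stage ``lies in $\E$.'' That is not true: in \cref{thm:co-intersection} the stages $e^{(j)}$ range over \emph{all} $\bT$-dense maps with finitely presentable codomain through which the reflection unit factors, and nothing forces those codomains into $\E$ (only the filtered colimit, the reflection itself, lies in $\E$). Moreover, even if a codomain did lie in $\E$, that would only say the judgment holds in that one object, not in every object of $\E$, which is what membership in your $\bT'$ requires. The correct argument is available and short: every $N\in\E$ is orthogonal to the reflection unit $e_{F^\rho A_k}$, and since $e_{F^\rho A_k}=\kappa_j\circ e^{(j)}$ with $e^{(j)}$ a ($\bT$-dense, hence epi) stage, $N$ is orthogonal to $e^{(j)}$ as well; by \cref{prop:validity_for_PHL} the judgment is then valid in $N$, so it does belong to $\bT'$. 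With this repair (and noting that your appeal to \cref{lem:orth_closed_under_filcolim} for passing orthogonality to colimits of \emph{arrows} is really the elementary fact that a limit of bijections is a bijection, as the paper uses when identifying $\orth{\Lambda}$ with $\orth{\Lambda^*}$), your proof is complete and matches the paper's.
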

\begin{proof}
    {[\ref{thm:birkhoff_partialHorn-1}$\implies$\ref{thm:birkhoff_partialHorn-2}]}
    Consider the following adjunction:
    \begin{equation}\label{eq:adj_F^rhoU^rho}
    \begin{tikzcd}[column sep=large, row sep=large]
        \bS\-\PMod\arrow[r,"F^\rho",shift left=7pt]\arrow[r,"\perp"pos=0.5,phantom] &[10pt]\bT\-\PMod\arrow[l,"U^\rho",shift left=7pt]
    \end{tikzcd}
    \end{equation}
    Note that $U^\rho$ preserves filtered colimits.
    Let $\bT'=\{\,\phi^\rho_i\seq{\tup{x}^\rho_i}\psi_i\,\}_{i\in I}$.
    Then, for every $M\in\bT\-\PMod$, we obtain the following natural isomorphisms:
    \begin{multline*}
        \bT\-\PMod(F^\rho\repn{\tup{x}_i.\phi_i}_\bS , M)
        \cong
        \bS\-\PMod(\repn{\tup{x}_i.\phi_i}_\bS , U^\rho M) \\
        \cong
        \intpn{\tup{x}_i.\phi_i}{U^\rho M}
        =
        \intpn{\tup{x}^\rho_i.\phi^\rho_i}{M}
        \cong
        \bT\-\PMod(\repn{\tup{x}^\rho_i.\phi^\rho_i}_\bT , M).
    \end{multline*}
    This admits us to consider $F^\rho\repn{\tup{x}_i.\phi_i}_\bS=\repn{\tup{x}^\rho_i.\phi^\rho_i}_\bT$.
    By \cref{prop:validity_for_PHL}, $(\bT+\bT')\-\PMod\subseteq\bT\-\PMod$ is an orthogonality class with respect to the following class of morphisms:
    \begin{equation*}
        \Lambda:=\{~F^\rho\repn{\tup{x}_i.\phi_i}_\bS \arr{\repn{\tup{x}^\rho_i}_\bT} \repn{\tup{x}^\rho_i.\phi^\rho_i\wedge \psi_i}_\bT~\}_{i\in I}.
    \end{equation*}
    Orthogonality classes are in general closed under products.
    A similar argument as in the proof of \cref{lem:closed_under_localret_filteredcolim} shows that $(\bT+\bT')\-\PMod\subseteq\bT\-\PMod$ is closed under $U^\rho$-retracts and filtered colimits.
    In the following, we show that $(\bT+\bT')\-\PMod\subseteq\bT\-\PMod$ is closed under $\bT$-closed subobjects.
    Let $m:M\hookrightarrow N$ be a $\bT$-closed monomorphism in $\bT\-\PMod$, and assume $N\in\orth{\Lambda}=(\bT+\bT')\-\PMod$.
    Take a morphism $f:F^\rho\repn{\tup{x}_i.\phi_i}_\bS\to M$ arbitrarily.
    By $N\in\orth{\Lambda}$, there exists a unique morphism $g:\repn{\tup{x}^\rho_i.\phi^\rho_i\wedge \psi_i}_\bT\to N$ which makes the following commutes.
    \begin{equation}\label{eq:comm_square_xm}
        \begin{tikzcd}[column sep=huge, row sep=huge]
            F^\rho\repn{\tup{x}_i.\phi_i}_\bS & M \\
            \repn{\tup{x}^\rho_i.\phi^\rho_i\wedge \psi_i}_\bT & N
            \arrow[from=1-1,to=2-1,"\repn{\tup{x}^\rho_i}_\bT"']
            \arrow[from=1-2,to=2-2,hook',"m"]
            \arrow[from=1-1,to=1-2,"f"]
            \arrow[from=2-1,to=2-2,dashed,"\exists !g"']
        \end{tikzcd}\incat{\bT\-\PMod}
    \end{equation}
    Since $\repn{\tup{x}^\rho_i}_\bT$ is $\bT$-dense, \cref{thm:dense_closedmono_factorization} ensures the unique existence of a diagonal filler for \cref{eq:comm_square_xm}.
    This proves that $M$ belongs to $\orth{\Lambda}=(\bT+\bT')\-\PMod$.

    {[\ref{thm:birkhoff_partialHorn-2}$\implies$\ref{thm:birkhoff_partialHorn-1}]}
    In what follows, let $\A:=\bS\-\PMod$.
    Since $\E\subseteq\bT\-\PMod$ is closed under products and $\bT$-closed subobjects, 
    \cref{cor:E-reflective} claims that the adjoint
    \begin{equation*}
    \begin{tikzcd}[column sep=large, row sep=large]
        \bT\-\PMod\arrow[r,"r",shift left=7pt]\arrow[r,"\perp"pos=0.5,phantom] &[10pt]\E\arrow[l,shift left=7pt,hook']
    \end{tikzcd}
    \end{equation*}
    exists, and the $M(\in\bT\-\PMod)$-component of its unit $e$
    \begin{equation*}
        M\longarr{e_M}rM\incat{\bT\-\PMod}
    \end{equation*}
    is $\bT$-dense.
    Consider the following class of morphisms in $\bT\-\PMod$:
    \begin{gather*}
        \Lambda:=\{\, F^\rho A\longarr{e_{F^\rho A}}rF^\rho A \,\}_{A\in\fp{\A}},
        \\
        \Lambda^*:=\{\, F^\rho X\longarr{e_{F^\rho X}}rF^\rho X \,\}_{X\in\A}.
    \end{gather*}
    Every $X\in\A$ can be presented as a filtered colimit $X=\Colim{I\in\bI}X_I$ which $X_I$ are finitely presentable.
    Since $\E\subseteq\bT\-\PMod$ is closed under filtered colimits, 
    $e_{F^\rho X}=\Colim{I\in\bI}e_{F^\rho X_I}$ is a filtered colimit in the arrow category $(\bT\-\PMod)^\to$.
    Thus, $\orth{\Lambda}=\orth{\Lambda^*}$ holds.

    Take an object $M\in\bT\-\PMod$ satisfying $M\in\orth{\Lambda^*}$.
    Let denote by $\epsilon$ the counit of the adjunction \cref{eq:adj_F^rhoU^rho}.
    By $e_{F^\rho U^\rho (M)}\in\Lambda^*$, there exists a unique morphism $p$ which makes the following commutes:
    \begin{equation*}
        \begin{tikzcd}[column sep=huge, row sep=huge]
            F^\rho U^\rho(M)\arrow[d,"e_{F^\rho U^\rho(M)}"']\arrow[r,"\epsilon_M"] & M \\
            rF^\rho U^\rho(M)\arrow[ur,dashed,"\exists !p"'] &
        \end{tikzcd}\incat{\bT\-\PMod}
    \end{equation*}
    Since $U^\rho\epsilon_M$ is a retraction, $U^\rho p$ is also a retraction, which implies $p$ is a $U^\rho$-retraction.
    Since $\E\subseteq\bT\-\PMod$ is closed under $U^\rho$-retracts, we have $M\in\E$.
    By the above argument, we have $\orth{\Lambda}=\E$.

    For each finitely presentable object $A\in\fp{\A}$, 
    choose a Horn formula $\tup{x}_A.\phi_A$ satisfying $A\cong\repn{\tup{x}_A.\phi_A}_\bS$.
    By \cref{thm:co-intersection}, $e_{F^\rho A}\in{F^\rho A}/\bT\-\PMod$ can be presented as the following filtered colimit of $\bT$-dense morphisms $e_{F^\rho A}^{(j)}$ whose codomain is finitely presentable:
   \[
   e_{F^\rho A}=\Colim{j}e_{F^\rho A}^{(j)} \incat{{F^\rho A}/\bT\-\PMod}.
   \]
    By \cref{lem:dense_between_fp}, each $e_{F^\rho A}^{(j)}$ has the following presentation:
    \begin{equation*}
        F^\rho\repn{\tup{x}_A.\phi_A}_\bS \longarr{\repn{\tup{x}^\rho_A}_\bT} \repn{\tup{x}^\rho_A.\psi_A^{(j)}}_\bT.
    \end{equation*}
    Orthogonality to all $e_{F^\rho A}^{(j)}$ is equivalent to orthogonality to $e_{F^\rho A}$.
    Thus, taking
    \begin{equation*}
        \bT':=\{\, \phi^\rho_A\seq{\tup{x}^\rho_A}\psi_A^{(j)} \,\}_{A,j},
    \end{equation*}
    we have $(\bT+\bT')\-\PMod=\orth{\Lambda}=\E$.
\end{proof}

The above theorem has several useful corollaries.
Taking $\rho$ as the trivial one $\rho:(S,\varnothing,\varnothing)\to (S,\Sigma,\bT)$, we obtain the first corollary:
\begin{corollary}
    Let $\bT$ be a partial Horn theory over $\Sigma$.
    Then, for every replete full subcategory $\E\subseteq\bT\-\PMod$, the following are equivalent:
    \begin{enumerate}
        \item
        $\E$ is definable by Horn formulas, i.e., 
        there exists a set $E$ of Horn formulas satisfying $\E=(\bT+\bT')\-\PMod$, where $\bT':=\{ \top\seq{\tup{x}}\phi\}_{\tup{x}.\phi\in E}.$
        \item
        $\E\subseteq\bT\-\PMod$ is closed under products, $\Sigma$-closed subobjects, surjections, and filtered colimits.
    \end{enumerate}
\end{corollary}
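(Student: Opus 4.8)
The plan is to invoke \cref{thm:birkhoff_partialHorn} for the trivial theory morphism $\rho\colon(S,\varnothing,\varnothing)\to(S,\Sigma,\bT)$ whose source is the empty $S$-sorted theory. First I would unwind what each ingredient of the theorem becomes in this case. The empty signature has no function or relation symbols and no axioms, so a partial model of $(S,\varnothing,\varnothing)$ is simply an $S$-sorted set and $(S,\varnothing,\varnothing)\-\PMod\cong\Set^S$; accordingly the induced forgetful functor $U^\rho\colon\bT\-\PMod\to\Set^S$ is the underlying-$S$-sorted-set functor. A morphism is then a $U^\rho$-retraction precisely when its underlying $S$-sorted map splits, and since split epimorphisms in $\Set^S$ coincide with componentwise surjections (by the axiom of choice), the $U^\rho$-retracts are exactly the surjective homomorphisms. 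As the ambient theory is unchanged, $\bT$-closed and $\Sigma$-closed agree, so condition \ref{thm:birkhoff_partialHorn-2} of the theorem reads verbatim as closure under products, $\Sigma$-closed subobjects, surjections, and filtered colimits, which is condition (ii) of the corollary.

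It remains to match ``definable by $\rho$-relative judgments'' with the Horn-formula axioms $\top\seq{\tup{x}}\phi$ of the corollary. The direction (i)$\Rightarrow$(ii) is immediate: each $\top\seq{\tup{x}}\phi$ is itself a $\rho$-relative judgment, obtained by taking the premise over the empty source signature to be $\tup{x}.\top$, since $\top^\rho=\top$ and $\tup{x}^\rho=\tup{x}$. Hence any $\bT'$ of the stated form is a set of $\rho$-relative judgments, and \cref{thm:birkhoff_partialHorn} supplies closure under the four operations. For the converse (ii)$\Rightarrow$(i) I would inspect the axioms produced in the proof of \cref{thm:birkhoff_partialHorn}: they have the shape $\phi^\rho_A\seq{\tup{x}^\rho_A}\psi^{(j)}_A$, where $\tup{x}_A.\phi_A$ is a formula presenting a finitely presentable object $A\in\fp{\Set^S}$. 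The crucial simplification is that every finitely presentable object of $\Set^S$ is a finite $S$-sorted set, hence is presented by a context $\tup{x}_A$ together with the trivial premise $\top$; one may therefore take $\phi_A=\top$ throughout, so that the produced judgments collapse to $\top\seq{\tup{x}^\rho_A}\psi^{(j)}_A$, i.e.\ Horn-formula axioms of exactly the required form.

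Collecting these specializations, the equivalence of the corollary drops straight out of \cref{thm:birkhoff_partialHorn}. I do not anticipate a genuine obstacle: the work lies entirely in recognizing the two features particular to an empty base theory, namely that $U^\rho$-retracts reduce to surjections and that finitely presentable $S$-sorted sets carry no nontrivial defining formula, so that the left-hand premises of the resulting $\rho$-relative judgments may be taken to be $\top$. The only point I would take care to state explicitly is the (choice-based) identification of surjections with split epimorphisms in $\Set^S$.
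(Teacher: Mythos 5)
Your proposal is correct and follows exactly the paper's route: the paper derives this corollary by applying \cref{thm:birkhoff_partialHorn} to the trivial theory morphism $\rho:(S,\varnothing,\varnothing)\to(S,\Sigma,\bT)$, and your unwinding of the specializations (that $U^\rho$-retracts become surjections via split epimorphisms in $\Set^S$, and that the premises $\phi_A$ presenting finitely presentable $S$-sorted sets may be taken to be $\top$) supplies precisely the details the paper leaves implicit.
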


Taking $\rho$ as the identity $\bT\to\bT$, we obtain the second corollary:
\begin{corollary}
    Let $\bT$ be a partial Horn theory over $\Sigma$.
    Then, for every replete full subcategory $\E\subseteq\bT\-\PMod$, the following are equivalent:
    \begin{enumerate}
        \item
        $\E$ is definable by Horn sequents, i.e., 
        there exists a set $\bT'$ of Horn sequents satisfying $\E=(\bT+\bT')\-\PMod$.
        \item
        $\E\subseteq\bT\-\PMod$ is closed under products, $\Sigma$-closed subobjects, and filtered colimits.
    \end{enumerate}
\end{corollary}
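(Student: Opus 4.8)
The plan is to deduce the corollary from the general Birkhoff theorem \cref{thm:birkhoff_partialHorn} by taking $\rho$ to be the identity theory morphism $\id\colon\bT\to\bT$. First I would record how the data of \cref{thm:birkhoff_partialHorn} specialize in this case. Because the $\rho$-translation along the identity replaces every symbol by itself, we have $\phi^\rho=\phi$ and $\tup{x}^\rho=\tup{x}$, so a $\rho$-relative judgment is nothing but an arbitrary Horn sequent $\phi\seq{\tup{x}}\psi$ over $\Sigma$; thus condition \ref{thm:birkhoff_partialHorn-1} of the theorem coincides with clause (i) of the corollary. Similarly, the induced forgetful functor $U^\rho\colon\bT\-\PMod\to\bT\-\PMod$ is the identity $\Id$, so a $U^\rho$-retraction is simply a retraction in $\bT\-\PMod$. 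Hence condition \ref{thm:birkhoff_partialHorn-2} specializes to closure under products, $\Sigma$-closed subobjects, retracts, and filtered colimits, which exceeds clause (ii) of the corollary only by the extra closure under retracts.

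Granting this, the implication (i)$\implies$(ii) is immediate, since \cref{thm:birkhoff_partialHorn} furnishes closure under products, $\Sigma$-closed subobjects, and filtered colimits (we simply forget the retract clause). For (ii)$\implies$(i) the entire task reduces to verifying that the retract clause is already implied, i.e.\ that closure under $\Sigma$-closed subobjects forces closure under retracts. The key observation, which I would isolate, is that \emph{every section in $\bT\-\PMod$ is a $\Sigma$-closed monomorphism}.

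To establish this, let $p\colon C\to D$ be a retraction in $\bT\-\PMod$ with section $s\colon D\to C$, so $ps=\id_D$ and $s$ is in particular a monomorphism. Identifying $D$ with its image under $s$, I would check the criterion of \cref{rem:explaination_closedmono_dense}: for elements $\tup{d}$ of $D$, if $\intpn{f}{C}(s(\tup{d}))$ is defined for a function symbol $f$, then applying the homomorphism $p$ and using $ps=\id_D$ shows that $\intpn{f}{D}(\tup{d})=p\bigl(\intpn{f}{C}(s(\tup{d}))\bigr)$ is defined; and if $s(\tup{d})\in\intpn{R}{C}$ for a relation symbol $R$, then $\tup{d}=p(s(\tup{d}))\in\intpn{R}{D}$. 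Hence $s$ exhibits $D$ as a $\Sigma$-closed subobject of $C$, and since $\E$ is replete and closed under $\Sigma$-closed subobjects, $C\in\E$ gives $D\in\E$. With all four conditions of \ref{thm:birkhoff_partialHorn-2} now in force, \cref{thm:birkhoff_partialHorn} yields clause (i), completing the argument. The only nonformal ingredient is this one-line verification that sections are $\Sigma$-closed, so I anticipate no real obstacle.
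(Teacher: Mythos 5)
Your proposal is correct and matches the paper's own argument: the paper likewise specializes \cref{thm:birkhoff_partialHorn} to the identity theory morphism and disposes of the extra retract condition by noting that every split monomorphism is $\Sigma$-closed, so closure under $\Sigma$-closed subobjects already implies closure under retracts. Your explicit verification that a section $s$ with retraction $p$ satisfies the closedness criterion (by pushing definedness of $\intpn{f}{C}$ and membership in $\intpn{R}{C}$ forward along the homomorphism $p$) is exactly the one-line fact the paper leaves implicit.
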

\begin{proof}
    Since every split monomorphism is $\Sigma$-closed, being closed under $\Sigma$-closed subobjects implies being closed under retracts.
    Thus, this is a corollary of \cref{thm:birkhoff_partialHorn}.
\end{proof}

\begin{example}
    Given a replete full subcategory $\E$ of a locally finitely presentable category $\A$, 
    whether $\E\subseteq\A$ is definable by Horn sequents depends on the choice of a partial Horn theory $\bT$ for $\A$.
    For example, let $\A:=\Mon$ be the category of monoids and let $\E:=\Grp$ be the category of groups, and consider the partial Horn theories $\bT_\mon$ and $\bT'_\mon$ as in \cref{eg:PHT_for_monoids}.
    Then $\Grp\subseteq\bT'_\mon\-\PMod$ is definable by the following Horn sequent:
    \begin{gather*}
        \top\seq{x} x^{-1}\defined.
    \end{gather*}
    On the other hand, $\Grp\subseteq\bT_\mon\-\PMod$ cannot be definable by Horn sequents, because $\bN$ is a $\bT_\mon$-closed subobject of $\bZ$ and not a group even though $\bZ$ is a group.
\end{example}

Taking $\rho$ as a ``relative algebraic theory,'' we have the following theorem.
This theorem is a generalization of Birkhoff's variety theorem from classical algebraic theories to our relative algebraic theories.

\begin{theorem}\label{thm:birkhoff_for_rat}
    Let $\bS$ be a partial Horn theory over an $S$-sorted signature $\Sigma$.
    Let $(\Omega,E)$ be an $\bS$-relative algebraic theory with the forgetful functor $U:\Alg(\Omega,E)\to\bS\-\PMod$.
    Then, for every replete full subcategory $\E\subseteq\Alg(\Omega,E)$, the following are equivalent:
    \begin{enumerate}
        \item
        $\E$ is definable by $\bS$-relative judgments, i.e., 
        there exists a set $E'$ of $\bS$-relative judgments satisfying $\E=\Alg(\Omega,E+E')$.
        \item
        $\E\subseteq\Alg(\Omega,E)$ is closed under products, $\Sigma$-closed subobjects, $U$-retracts, and filtered colimits.
    \end{enumerate}
\end{theorem}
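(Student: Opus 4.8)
The plan is to derive this theorem directly from the partial-Horn version \cref{thm:birkhoff_partialHorn}. Recall from \cref{eq:PHT_for_OmegaE} that $\Alg(\Omega,E)\cong\pht{\Omega}{E}\-\PMod$, where $\pht{\Omega}{E}$ is a partial Horn theory over the signature $\Sigma+\Omega$. I would set $\bT:=\pht{\Omega}{E}$ and take $\rho\colon\bS\to\bT$ to be the theory morphism induced by the inclusion $\Sigma\hookrightarrow\Sigma+\Omega$ (identity on sorts and on all symbols of $\Sigma$). This is a legitimate theory morphism because every axiom of $\bS$ already belongs to $\pht{\Omega}{E}$, hence is trivially a PHL-theorem of $\bT$. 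Under the isomorphism $\Alg(\Omega,E)\cong\bT\-\PMod$, the forgetful functor $U$ is identified with $U^\rho$, since both send an algebra to its underlying $\bS$-model.

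With these identifications, the conditions involving products, $U$-retracts (matching $U^\rho$-retracts), and filtered colimits transfer verbatim between the two theorems. The substantive point to check is that the two notions of closedness agree: a monomorphism between $(\Omega,E)$-algebras is $\Sigma$-closed if and only if it is $\bT$-closed (i.e.\ $(\Sigma+\Omega)$-closed). One direction is immediate, since $\Sigma\subseteq\Sigma+\Omega$. For the converse, let $m\colon A\hookrightarrow B$ be a $\Sigma$-closed monomorphism. Since $\Omega$ contributes only function symbols, and each operator $\omega\in\Omega$ has domain $\intpn{\ar(\omega)}{-}$ cut out by the Horn formula $\ar(\omega)$ over $\Sigma$, it suffices to recall that a $\Sigma$-closed monomorphism reflects the interpretation of every Horn formula over $\Sigma$; that is, $\intpn{\tup{x}.\phi}{A}=m^{-1}\bigl(\intpn{\tup{x}.\phi}{B}\bigr)$ for each $\tup{x}.\phi$ over $\Sigma$, which follows by induction on $\Sigma$-terms and $\Sigma$-formulas from the defining pullback squares. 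Applying this to $\phi=\ar(\omega)$ yields exactly the pullback square required for $\omega$, so $m$ is $(\Sigma+\Omega)$-closed.

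Next I would match clause (i). For $\rho$ the inclusion above, the $\rho$-translation is the identity, so a $\rho$-relative judgment $\phi^\rho\seq{\tup{x}^\rho}\psi$ is precisely a Horn sequent $\phi\seq{\tup{x}}\psi$ over $\Sigma+\Omega$ whose antecedent $\phi$ lies over $\Sigma$, which is exactly an $\bS$-relative judgment. Moreover, for any set $E'$ of such judgments, $\bT+E'=\pht{\Omega}{E}\cup E'=\pht{\Omega}{E+E'}$, whence $(\bT+E')\-\PMod\cong\Alg(\Omega,E+E')$. Thus clause (i) of \cref{thm:birkhoff_partialHorn} reads as ``$\E=\Alg(\Omega,E+E')$ for some set $E'$ of $\bS$-relative judgments,'' matching clause (i) of the present theorem.

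Having aligned both clauses of \cref{thm:birkhoff_partialHorn} with both clauses here, the result follows at once. I expect the only genuine work to be the closedness lemma of the second paragraph; everything else is bookkeeping about the identification $\Alg(\Omega,E)\cong\pht{\Omega}{E}\-\PMod$ and the trivial nature of the inclusion theory morphism $\rho$.
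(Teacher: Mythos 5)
Your proposal is correct and follows essentially the same route as the paper: the paper's proof also reduces to \cref{thm:birkhoff_partialHorn} via the extension $\rho:(S,\Sigma,\bS)\to(S,\Sigma+\Omega,\pht{\Omega}{E})$, identifies $U$ with $U^\rho$, and notes that $(\Sigma+\Omega)$-closedness coincides with $\Sigma$-closedness because the domain of each operator is cut out by a Horn formula over $\Sigma$. Your write-up merely supplies the inductive detail behind that last equivalence, which the paper leaves implicit.
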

\begin{proof}
    Consider the partial Horn theory $\pht{\Omega}{E}$ as in \cref{eq:PHT_for_OmegaE} and the extension $\rho:(S,\Sigma,\bS)\to (S,\Sigma+\Omega,\pht{\Omega}{E})$.
    Then the forgetful functor $U$ coincides with $U^\rho$.
    Since the domain of every operator in $\Omega$ is determined by a formula over $\Sigma$, $(\Sigma+\Omega)$-closedness is equivalent to $\Sigma$-closedness.
    Thus we have proved this theorem by \cref{thm:birkhoff_partialHorn}.
\end{proof}

%-----------------------------------------------------------------------------
\section{Finitary monads on locally finitely presentable categories}\label{section5}
%-----------------------------------------------------------------------------
%-----------------------------------------------------------------------------
\subsection{From finitary monads to relative algebraic theories}
%-----------------------------------------------------------------------------
We now prove that every finitary monad on a locally finitely presentable category arises from a relative algebraic theory.
This is a converse of the result in \cref{subsection3.2}.
In this subsection, we fix an $S$-sorted signature $\Sigma$ and a partial Horn theory $\bS$ over $\Sigma$.

%\begin{notation}\label{note:kleisli_star}
%    Let $(T,\eta,\mu)$ be a monad on a category $\C$.
%    Given a morphism $f:Y\to TX$ in $\C$, define $f^*:=\mu_X\circ Tf:TY\to TX$.
%    \begin{equation*}
%        \begin{tikzcd}
%            TY\arrow[rr,"f^*"]\arrow[dr,"Tf"'] && TX \\
%            & T^2X\arrow[ru,"\mu_X"'] &
%        \end{tikzcd}\incat{\C}
%    \end{equation*}
%\end{notation}

\begin{definition}
    Let $T$ be a finitary monad on $\bS\-\PMod$.
    We now define an $\bS$-relative signature $\Omega_T$ for the finitary monad $T$:
    For each Horn formula-in-context $\tup{x}.\phi$ over $\Sigma$ and each sort $s\in S$, we have a set $(T\repn{\tup{x}.\phi}_\bS)_s$.
    We regard each element $\omega\in(T\repn{\tup{x}.\phi}_\bS)_s$ as an operator with $\ar(\omega):=\tup{x}.\phi$ and $\type(\omega):=s$, and define $\Omega_T$ to be the set of all such operators:
    \begin{equation*}
        \Omega_T:=\coprod_{\substack{\text{Horn formula }\tup{x}.\phi\text{ over }\Sigma  \\  \text{sort }s\in S}} (T\repn{\tup{x}.\phi}_\bS)_s
    \end{equation*}
\end{definition}

\begin{definition}
    Let $T$ be a finitary monad on $\A:=\bS\-\PMod$.
    We now define a natural transformation $\alpha_T:H_{\Omega_T}\Rightarrow T$, where $H_{\Omega_T}$ is the finitary endofunctor for $\Omega_T$ as in \cref{def:endofunc_for_signature}.
    In what follows, we regard each $\omega\in\Omega_T$ as a morphism $\repn{x{:}\type(\omega).\top}_\bS\to T\repn{\ar(\omega)}_\bS$ in $\A$ by \cref{prop:repn_obj_represents_intpn}.
    First, for each $A\in\A$ and $\omega\in\Omega_T$, we have the following map:
    \begin{equation*}
        \intpn{\ar(\omega)}{A} \cong \A(\repn{\ar(\omega)}_\bS, A) \ni f\quad\mapsto\quad (Tf)\circ\omega\in \A(\repn{x{:}\type(\omega).\top}_\bS, TA)
    \end{equation*}
    Then, the following bijective correspondence yields a natural transformation $\alpha:H_{\Omega_T}\Rightarrow T$:
    \begin{center}
    \renewcommand{\arraystretch}{1.4}
    \begin{tabular}{c}
        $\intpn{\ar(\omega)}{A}\longarr{}\A( \repn{x{:}\type(\omega).\top}_\bS , TA )\incat{\Set}\quad(\omega\in\Omega_T)$
        \\
        \hline\hline
        $\intpn{\ar(\omega)}{A}\bullet\repn{x{:}\type(\omega).\top}_\bS \longarr{} TA\incat{\A}\quad(\omega\in\Omega_T)$
        \\
        \hline\hline
        $H_{\Omega_T}(A)\longarr{\alpha_{T,A}}TA\incat{\A}$
    \end{tabular}
    \renewcommand{\arraystretch}{1}
    \end{center}
\end{definition}

\begin{lemma}\label{lem:alpha_is_dense}
    Let $T$ be a finitary monad on $\A:=\bS\-\PMod$.
    Then, every component of the natural transformation $\alpha_T$ is $\Sigma$-dense and particularly epic.
\end{lemma}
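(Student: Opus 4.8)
The plan is to prove the stronger statement that each component $\alpha_{T,A}$ is \emph{surjective} on underlying sets. This suffices: a surjective homomorphism in $\A:=\bS\-\PMod$ has image equal to its codomain, so the $\Sigma$-closed submodel generated by its image is everything, which is precisely $\Sigma$-density by \cref{rem:explaination_closedmono_dense}\ref{rem:explaination_closedmono_dense-3}; and every $\Sigma$-dense morphism is an epimorphism, being in the left class of the proper factorization system of \cref{thm:dense_closedmono_factorization}. First I would unwind the definition of $\alpha_T$ to describe its image. Regarding each $\omega\in\Omega_T$ as a morphism $\repn{x{:}\type(\omega).\top}_\bS\to T\repn{\ar(\omega)}_\bS$, the defining bijective correspondence shows that the value of $\alpha_{T,A}$ on the $\omega$-indexed summand at a point $f\in\intpn{\ar(\omega)}{A}\cong\A(\repn{\ar(\omega)}_\bS,A)$ is the element $(Tf)_{\type(\omega)}(\omega)\in(TA)_{\type(\omega)}$, using \cref{prop:repn_obj_represents_intpn} to read the composite $(Tf)\circ\omega$ as an element. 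Hence the image of $\alpha_{T,A}$ on a sort $s$ is exactly
\[
\{\,(Tf)_s(\omega)\mid \tup{x}.\phi\text{ a Horn formula over }\Sigma,\ \omega\in(T\repn{\tup{x}.\phi}_\bS)_s,\ f\colon\repn{\tup{x}.\phi}_\bS\to A\,\}.
\]

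The heart of the argument is a filtered-colimit computation. Since $\A$ is locally finitely presentable (\cref{thm:PHT_characterize_lfp}) and its finitely presentable objects are, up to isomorphism, the representing models by \cref{thm:repn_enumerates_fpobjects}, the object $A$ is the filtered colimit $A=\Colim{I}A_I$ of its canonical diagram of finitely presentable objects, with each $A_I\cong\repn{\tup{x}_I.\phi_I}_\bS$ and coprojections $f_I\colon A_I\to A$. Because $T$ is finitary, applying $T$ gives a filtered colimit $TA=\Colim{I}TA_I$ with coprojections $Tf_I$, and by the sortwise description of filtered colimits in $\A$ (\cref{rem:filtered_colim_partial_model}) every element $t\in(TA)_s$ has the form $(Tf_I)_s(t_I)$ for some index $I$ and some $t_I\in(TA_I)_s\cong(T\repn{\tup{x}_I.\phi_I}_\bS)_s$. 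Transporting $t_I$ across the isomorphism $A_I\cong\repn{\tup{x}_I.\phi_I}_\bS$ presents it as an operator $\omega\in\Omega_T$ with $\ar(\omega)=\tup{x}_I.\phi_I$ and $\type(\omega)=s$, and presents $f_I$ as an element of $\intpn{\ar(\omega)}{A}$; thus $t=(Tf_I)_s(\omega)$ lies in the image described above. Therefore $\alpha_{T,A}$ is surjective on every sort.

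With surjectivity in hand, both conclusions of the lemma follow immediately: the image is all of $TA$, so the $\Sigma$-closed submodel it generates is $TA$, giving $\Sigma$-density via \cref{rem:explaination_closedmono_dense}\ref{rem:explaination_closedmono_dense-3}, and epimorphicity then follows from \cref{thm:dense_closedmono_factorization}. I expect the only delicate point to be the bookkeeping in the first paragraph: matching the value of $\alpha_{T,A}$ on each summand (as produced by its defining correspondence through \cref{prop:repn_obj_represents_intpn}) with the coprojection $Tf_I$ applied to the element $\omega$, and checking that the isomorphisms $A_I\cong\repn{\tup{x}_I.\phi_I}_\bS$ can be absorbed into the choice of $f$ and $\omega$. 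Once this identification is pinned down, no genuine obstacle remains.
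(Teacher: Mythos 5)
Your proof is correct and follows essentially the same route as the paper's: the key computation identifying elements of $(T\repn{\tup{x}.\phi}_\bS)_s$ with operators hit by $\alpha_{T,A}$ is exactly the paper's finitely presentable case, and your passage to general $A$ via finitariness of $T$ replaces the paper's abstract appeal to the closure of the left class of the factorization system under colimits in the arrow category by a concrete element chase through the sortwise description of filtered colimits, which yields the slightly stronger conclusion that $\alpha_{T,A}$ is surjective. The only nitpick is that the image of $\alpha_{T,A}$ at sort $s$ \emph{contains} (rather than equals) the set you write down, since $\repn{x{:}\type(\omega).\top}_\bS$ may have elements other than $[x]$; this is harmless, as containment is all your argument uses.
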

\begin{proof}
    We have to show that $\alpha_{T,A}:H_{\Omega_T}(A)\to TA$ is $\Sigma$-dense for every $A\in\A$.
    We first show the case where $A$ is finitely presentable.
    By \cref{thm:repn_enumerates_fpobjects}, it suffices to show the case $A=\repn{\tup{x}.\phi}_\bS$, where $\tup{x}.\phi$ is a Horn formula-in-context over $\Sigma$.
    Let $\omega\in (T\repn{\tup{x}.\phi}_\bS)_s$ be an arbitrary element.
    Then, $\omega$ is an operator with $\ar(\omega)=\tup{x}.\phi$ and $\type(\omega)=s$.
    Let $\iota:\repn{x{:}s.\top}_\bS \to \coprod_{\omega'}\A(\repn{\ar(\omega')}_\bS , A)\bullet\repn{x{:}\type(\omega').\top}_\bS \cong H_{\Omega_T}(A)$ be the coprojection for $\omega$ and $\id\in\A(\repn{\tup{x}.\phi}_\bS, A)=\A(\repn{\ar(\omega)}_\bS,A)$.
    Then, by definition of $\alpha_T$, the following commutes:
    \begin{equation*}
        \begin{tikzcd}
            \repn{x{:}s.\top}_\bS\arrow[rd,"\omega"']\arrow[r,"\iota"] & H_{\Omega_T}(A)\arrow[d,"\alpha_{T,A}"] \\
            & TA
        \end{tikzcd}\incat{\A}
    \end{equation*}
    In particular, $\omega\in (TA)_s$ lies in the image of $\alpha_{T,A}$.
    By \cref{rem:explaination_closedmono_dense}\ref{rem:explaination_closedmono_dense-3}, we now concludes that $\alpha_{T,A}$ is $\Sigma$-dense for every finitely presentable object $A\in\A$.

    We now turn to the general case.
    Given $A\in\A$, we can take a filtered colimit $A=\Colim{I\in\bI}A_I$ such that each $A_I$ is finitely presentable.
    Since $H_{\Omega_T}$ and $T$ are finitary, we have $\alpha_{T,A}=\Colim{I\in\bI}\alpha_{T,A_I}$ in the arrow category $\A^\to$.
    Since the class of all $\Sigma$-dense morphisms is a left class of an orthogonal factorization system in $\A$ by \cref{thm:dense_closedmono_factorization}, any colimit of $\Sigma$-dense morphisms is also $\Sigma$-dense.
    Thus, we concludes that $\alpha_{T,A}$ is $\Sigma$-dense.
\end{proof}

\begin{lemma}\label{lem:componentwize_epi_induce_fullyfaithful}
    Let $H$ and $K$ be endofunctors on a category $\C$ and let $\alpha:H\Rightarrow K$ be a natural transformation whose components are epimorphisms in $\C$.
    Let $\Alg\alpha:\Alg K\to\Alg H$ be the induced functor which assigns $(X,x\circ\alpha_X)$ to each object $(X,x)$.
    Then, the following hold:
    \begin{enumerate}
        \item
        $\Alg\alpha$ is fully faithful and injective on objects.
        \item
        The image of $\Alg\alpha$ is replete in $\Alg H$.
    \end{enumerate}
\end{lemma}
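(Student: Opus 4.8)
The plan is to exploit two facts throughout: that each component $\alpha_X$ is an epimorphism, and that $\alpha$ is natural, i.e.\ $\alpha_Y\circ H(f)=K(f)\circ\alpha_X$ for every $f:X\to Y$ in $\C$. Recall that $\Alg\alpha$ sends an object $(X,x)$ of $\Alg K$ to $(X,x\circ\alpha_X)$ and acts as the identity on underlying morphisms, so both assertions reduce to manipulating composites in $\C$.

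For injectivity on objects, I would observe that if $\Alg\alpha(X,x)=\Alg\alpha(X,x')$, then $X$ is shared and $x\circ\alpha_X=x'\circ\alpha_X$, whence $x=x'$ since $\alpha_X$ is epic. For full faithfulness, the plan is to compare the two morphism conditions on a $\C$-map $f:X\to Y$. On the $\Alg K$ side the condition is $f\circ x=y\circ K(f)$; on the $\Alg H$ side, between $(X,x\circ\alpha_X)$ and $(Y,y\circ\alpha_Y)$, it reads $f\circ x\circ\alpha_X=y\circ\alpha_Y\circ H(f)$. Rewriting the right-hand side by naturality as $y\circ K(f)\circ\alpha_X$, the $\Alg H$ condition becomes $f\circ x\circ\alpha_X=y\circ K(f)\circ\alpha_X$, which—since $\alpha_X$ is epic—is equivalent to the $\Alg K$ condition. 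Because $\Alg\alpha$ is the identity on morphisms, this single equivalence simultaneously shows that $\Alg\alpha$ is well defined on morphisms and that it is fully faithful.

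For repleteness, I would take an object $(Z,z)$ of $\Alg H$ together with an isomorphism $f:(Z,z)\to\Alg\alpha(X,x)=(X,x\circ\alpha_X)$ in $\Alg H$, whose underlying map in $\C$ is therefore invertible, and produce a $K$-algebra realizing $(Z,z)$. The natural candidate is to transport $x$ along $f$: set $w:=f^{-1}\circ x\circ K(f):K(Z)\to Z$, so that $(Z,w)$ is a $K$-algebra isomorphic to $(X,x)$ via $f$. It then remains to verify $w\circ\alpha_Z=z$. I would compute $w\circ\alpha_Z=f^{-1}\circ x\circ K(f)\circ\alpha_Z=f^{-1}\circ x\circ\alpha_X\circ H(f)$ using naturality, and then invoke that $f$ is an $\Alg H$-morphism, i.e.\ $f\circ z=x\circ\alpha_X\circ H(f)$, to conclude $w\circ\alpha_Z=f^{-1}\circ f\circ z=z$. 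Hence $(Z,z)=(Z,w\circ\alpha_Z)=\Alg\alpha(Z,w)$ lies in the image.

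No step presents a genuine obstacle; the argument is a direct diagram chase. The only points demanding care are keeping straight which property is invoked where—epic-ness of $\alpha_X$ for object injectivity and for cancelling in the fullness step, naturality for rewriting composites—and, in the repleteness argument, checking that the transported structure $w$ is genuinely compatible with $\alpha$ (the identity $w\circ\alpha_Z=z$) rather than merely producing an abstract $K$-algebra isomorphic to $(X,x)$.
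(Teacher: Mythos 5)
Your proposal is correct and follows essentially the same route as the paper's proof: object-injectivity by cancelling the epimorphism $\alpha_X$, full faithfulness by rewriting the $\Alg H$-morphism condition via naturality and again cancelling $\alpha_X$, and repleteness by transporting the $K$-algebra structure along the isomorphism and checking compatibility with $\alpha$. The only cosmetic difference is the direction in which you orient the isomorphism in the repleteness step, which is immaterial.
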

\begin{proof}\quad
    \begin{enumerate}
        \item
        If $(X,x\circ\alpha_X)=(Y,y\circ\alpha_Y)$, then $X=Y$ and $x=y$ since $\alpha_X=\alpha_Y$ is an epimorphism.
        Thus, $\Alg\alpha$ is injective on objects.
        To prove fully faithfulness, let $(X,x)$ and $(Y,y)$ be objects in $\Alg K$ and let $f:(X,x\circ\alpha_X)\to (Y,y\circ\alpha_Y)$ be a morphism in $\Alg H$.
        Then, in the following diagram, both the outer rectangle and the upper square commute:
        \begin{equation*}
            \begin{tikzcd}
                HX\arrow[d,"\alpha_X"']\arrow[r,"Hf"] & HY\arrow[d,"\alpha_Y"] \\
                KX\arrow[d,"x"']\arrow[r,"Kf"] & KY\arrow[d,"y"] \\
                X\arrow[r,"f"] & Y
            \end{tikzcd}\incat{\C}
        \end{equation*}
        Since $\alpha_X$ is an epimorphism, the lower square of the above diagram also commutes, which finishes the proof.
        %----------------------------------------------------------------------------------------------------
        \item
        Let $(X,x)$ be an object in $\Alg K$ and let $f:(X,x\circ\alpha_X)\to (Y,y)$ be an isomorphism in $\Alg H$.
        Then, we can easily show that $y=f\circ x\circ (Kf^{-1})\circ\alpha_Y$.
        Thus, $(Y,y)$ lies in the image of $\Alg\alpha$.\qedhere
    \end{enumerate}
\end{proof}

\begin{lemma}\label{lem:nat_square_pushout}
    Let $F,G:\C\to\D$ be functors and let $\alpha:F\Rightarrow G$ be a natural transformation.
    Let $r:X\to Y$ be a retraction in $\C$ and assume that $\alpha_X$ is an epimorphism in $\D$.
    Then, the following diagram forms a pushout square:
    \begin{equation*}
        \begin{tikzcd}
            FX\arrow[d,"\alpha_X"']\arrow[r,"Fr"] & FY\arrow[d,"\alpha_Y"] \\
            GX\arrow[r,"Gr"] & GY
        \end{tikzcd}\incat{\D}
    \end{equation*}
\end{lemma}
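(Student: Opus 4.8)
The plan is to verify the universal property of the pushout directly, exploiting a chosen section of the retraction $r$.

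First I would fix a section $s\colon Y\to X$ of $r$, so that $r\circ s=\id_Y$. The key structural observation is that applying $G$ turns $Gr\colon GX\to GY$ into a split epimorphism with section $Gs$, since $Gr\circ Gs=G(r\circ s)=\id_{GY}$. This single fact immediately yields the uniqueness half of the universal property: any two morphisms out of $GY$ that agree after precomposition with the split epi $Gr$ must coincide. (Commutativity of the square itself is automatic, being the naturality square of $\alpha$ at $r$.)

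For existence I would take an arbitrary cocone on the span $GX\xleftarrow{\alpha_X}FX\xrightarrow{Fr}FY$, that is, morphisms $v\colon GX\to W$ and $u\colon FY\to W$ with $v\circ\alpha_X=u\circ Fr$, and set $w:=v\circ Gs\colon GY\to W$ as the candidate mediating morphism. Verifying $w\circ\alpha_Y=u$ is pure naturality bookkeeping: naturality of $\alpha$ at $s$ gives $Gs\circ\alpha_Y=\alpha_X\circ Fs$, whence $w\circ\alpha_Y=v\circ\alpha_X\circ Fs=u\circ Fr\circ Fs=u\circ F(r\circ s)=u$, using the cocone relation and $r\circ s=\id_Y$.

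The one genuinely nontrivial step---and the only place the hypothesis on $\alpha_X$ enters---is checking the other triangle $w\circ Gr=v$, i.e.\ $v\circ G(s\circ r)=v$. Here $s\circ r$ is merely an idempotent on $X$, not the identity, so the equation is not formal. The plan is to precompose with $\alpha_X$ and cancel it: by naturality $G(s\circ r)\circ\alpha_X=\alpha_X\circ F(s\circ r)$, and then the cocone relation together with the identity $r\circ s\circ r=r$ gives $v\circ\alpha_X\circ F(s\circ r)=u\circ F(r\circ s\circ r)=u\circ Fr=v\circ\alpha_X$. Since $\alpha_X$ is epic, this forces $v\circ G(s\circ r)=v$, as required. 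I expect this epi-cancellation to be the crux; everything else is a routine diagram chase.
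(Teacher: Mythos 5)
Your proposal is correct and follows essentially the same route as the paper's proof: both define the mediating morphism as the cocone leg out of $GX$ precomposed with $Gs$, verify the $\alpha_Y$-triangle by naturality at $s$, and establish the $Gr$-triangle by precomposing with $\alpha_X$ and cancelling it as an epimorphism. Your identification of the epi-cancellation as the crux, with uniqueness coming for free from $Gr$ being split epi, matches the paper exactly.
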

\begin{proof}
    Let $f:FY\to Z$ and $g:GX\to Z$ be morphisms in $\D$ such that $f\circ Fr=g\circ\alpha_X$.
    We have to construct a unique morphism $h$ which makes the following commute:
    \begin{equation}\label{eq:canonical_morphism_h}
        \begin{tikzcd}
            FX\arrow[d,"\alpha_X"']\arrow[r,"Fr"] & FY\arrow[d,"\alpha_Y"']\arrow[rdd,"f"] & \\
            GX\arrow[r,"Gr"]\arrow[rrd,"g"'] & GY\arrow[rd,dashed,"h"description] & \\
            & & Z
        \end{tikzcd}\incat{\D}
    \end{equation}
    Since uniqueness of $h$ follows trivially, we only need to construct $h$.
    Let $s:Y\to X$ be a section of $r$ and define $h:=g\circ Gs$.
    Then, the following commutes:
    \begin{equation*}
        \begin{tikzcd}
            FY\arrow[dd,"\alpha_Y"']\arrow[rd,"Fs"']\arrow[rr,equal] & & FY\arrow[dd,"f"] \\
            & FX\arrow[d,"\alpha_X"]\arrow[ru,"Fr"'] & \\
            GY\arrow[rr,"h"',shift right=3]\arrow[r,"Gs"] & GX\arrow[r,"g"] & Z
        \end{tikzcd}\incat{\D}
    \end{equation*}
    Since $\alpha_X$ is an epimorphism, $h\circ Gr=g$ holds.
    Thus, the diagram \cref{eq:canonical_morphism_h} commutes.
\end{proof}

\begin{lemma}\label{lem:equifier_closed_under}
    Let $F,G:\C\to\D$ be functors and let $\alpha,\beta:F\Rightarrow G$ be natural transformations.
    Let $\E\subseteq\C$ be the equifier of $\alpha$ and $\beta$, i.e., 
    $\E\subseteq\C$ is a full subcategory of $\C$ defined by $\E:=\{ X\in\C \mid \alpha_X=\beta_X \}$.
    Then, the following hold:
    \begin{enumerate}
        \item\label{lem:equifier_closed_under-1}
        Let $m:X\to Y$ be a morphism in $\C$ such that $Gm$ is a monomorphism in $\D$.
        Then, $Y\in\E$ implies $X\in\E$.
        \item\label{lem:equifier_closed_under-2}
        Let $p:X\to Y$ be a morphism in $\C$ such that $Fp$ is an epimorphism in $\D$.
        Then, $X\in\E$ implies $Y\in\E$.
    \end{enumerate}
\end{lemma}
\begin{proof}
    Let $m:X\to Y$ be a morphism in $\C$ such that $Gm$ is a monomorphism.
    Consider the following diagram:
    \begin{equation*}
        \begin{tikzcd}
            FX\arrow[d,"\alpha_X"',shift right=1]\arrow[d,"\beta_X",shift left=1]\arrow[r,"Fm"] & FY\arrow[d,"\alpha_{Y}"',shift right=1]\arrow[d,"\beta_{Y}",shift left=1] \\
            GX\arrow[r,"Gm"] & GY
        \end{tikzcd}\incat{\D}
    \end{equation*}
    If $\alpha_Y=\beta_Y$ holds, then $(Gm)\circ\alpha_X=(Gm)\circ\beta_X$ holds, hence $\alpha_X=\beta_X$.
    This proves \ref{lem:equifier_closed_under-1}.
    \ref{lem:equifier_closed_under-2} is dual to \ref{lem:equifier_closed_under-1}.
\end{proof}

\begin{theorem}\label{thm:from_monad_to_alg}
    Let $T$ be a finitary monad on $\A:=\bS\-\PMod$.
    Then, there exist a set $E$ of $\bS$-relative judgments and an isomorphism $\A^T\cong\Alg(\Omega_T,E)$ which commutes with forgetful functors.
    \begin{equation*}
        \begin{tikzcd}
            \A^T\arrow[rd,"U^T"']\arrow[r,phantom,"\cong"] &[-10pt] \Alg(\Omega_T,E)\arrow[d,"\text{forgetful}"sloped]\arrow[r,phantom,"\subseteq"] &[-10pt] \Alg\Omega_T\arrow[ld,"U"] \\[15pt]
            & \A &
        \end{tikzcd}
    \end{equation*}
\end{theorem}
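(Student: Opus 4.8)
The plan is to realise the Eilenberg--Moore category $\A^T$ as a replete full subcategory of $\Alg\Omega_T$ and then invoke the relative Birkhoff theorem (\cref{thm:birkhoff_for_rat}) with the relative algebraic theory $(\Omega_T,\varnothing)$, for which $\Alg(\Omega_T,\varnothing)=\Alg\Omega_T$. First I would assemble the embedding. Writing $\Alg T$ for the inserter of the endofunctor $T$, so that $\A^T\hookrightarrow\Alg T$ is the full (replete) subcategory cut out by the unit and associativity laws, the natural transformation $\alpha_T\colon H_{\Omega_T}\Rightarrow T$ has epic components by \cref{lem:alpha_is_dense}. Hence \cref{lem:componentwize_epi_induce_fullyfaithful} makes $\Alg\alpha_T\colon\Alg T\to\Alg H_{\Omega_T}$ fully faithful, injective on objects, and of replete image. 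Composing with the isomorphism $\Alg H_{\Omega_T}\cong\Alg\Omega_T$ of \cref{lem:iso_alg_alg} and restricting along $\A^T\hookrightarrow\Alg T$ yields a fully faithful functor $J\colon\A^T\to\Alg\Omega_T$ sending $(X,x)$ to (the $\Omega_T$-algebra corresponding to) $(X,\,x\circ\alpha_{T,X})$ and satisfying $UJ=U^T$. A short diagram chase, using repleteness of the image of $\Alg\alpha_T$ together with full faithfulness, shows that the image $\E:=J(\A^T)$ is replete; concretely, $\E$ consists of those $\Omega_T$-algebras $(X,h)$ for which $h$ factors (necessarily uniquely, as $\alpha_{T,X}$ is epic) as $h=x\circ\alpha_{T,X}$ with $(X,x)$ a genuine $T$-algebra.

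Next I would verify that $\E$ satisfies the four closure conditions of \cref{thm:birkhoff_for_rat}. Closure under products and filtered colimits is routine: since $T$ and $H_{\Omega_T}$ are finitary, the forgetful functors from $\A^T$, $\Alg T$, and $\Alg\Omega_T$ all create these, and naturality of $\alpha_T$ shows $J$ preserves them, so repleteness of $\E$ gives closure. The substantive conditions are closure under $\Sigma$-closed subobjects and under $U$-retracts, where the key inputs are the factorization system of \cref{thm:dense_closedmono_factorization} and \cref{lem:nat_square_pushout,lem:equifier_closed_under}.

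For a $\Sigma$-closed monomorphism $m\colon M\hookrightarrow J(X,x)$, naturality of $\alpha_T$ and the fact that $m$ is an $\Omega_T$-homomorphism produce a commuting square in which $\alpha_{T,M}$ (which is $\Sigma$-dense by \cref{lem:alpha_is_dense}) sits against $m$ (which is $\Sigma$-closed). As $(\text{$\Sigma$-dense},\text{$\Sigma$-closed mono})$ is an orthogonal factorization system, the square has a unique diagonal filler $x_M\colon TM\to M$ with $x_M\circ\alpha_{T,M}=h_M$ and $m\circ x_M=x\circ Tm$; the second equation exhibits $m$ as a morphism $(M,x_M)\to(X,x)$ in $\Alg T$, and the first witnesses $J(M,x_M)=(M,h_M)$. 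That $(M,x_M)$ is an honest $T$-algebra then follows from $(X,x)\in\A^T$ and $m$ monic, via \cref{lem:equifier_closed_under}(i) applied to the unit and associativity equifiers inside $\Alg T$ (whose defining natural transformations land in the forgetful functor $W$, so $Wm$ monic suffices). Dually, for a $U$-retraction $p\colon J(X,x)\to B$: since $\alpha_{T,X}$ is epic, \cref{lem:nat_square_pushout} makes the naturality square of $\alpha_T$ at $Up$ a pushout, and the pair $(h_B,\,p\circ x)$ induces a unique $x_B\colon TB\to B$ with $x_B\circ\alpha_{T,B}=h_B$ and $x_B\circ Tp=p\circ x$; that $(B,x_B)$ is a $T$-algebra follows from \cref{lem:equifier_closed_under}(ii), using that $Up$ and hence $TT(Up)$ are split epimorphisms. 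In each case the produced object equals $J(M,x_M)$ respectively $J(B,x_B)$ and so lies in $\E$.

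With all four conditions in hand, \cref{thm:birkhoff_for_rat} yields a set $E$ of $\bS$-relative judgments with $\E=\Alg(\Omega_T,E)$, and composing with the fully faithful $J$ gives the isomorphism $\A^T\cong\Alg(\Omega_T,E)$ commuting with the forgetful functors to $\A$, as required. The step I expect to be the main obstacle is the $\Sigma$-closed-subobject case: one must correctly convert the $\Omega_T$-homomorphism property of $m$ into the commuting square that $\Sigma$-dense/$\Sigma$-closed orthogonality resolves, and then check that the single diagonal filler simultaneously serves as the $\Omega_T$-structure witness $x_M\circ\alpha_{T,M}=h_M$ and as the $T$-algebra intertwiner $m\circ x_M=x\circ Tm$ before handing the algebra-law verification to \cref{lem:equifier_closed_under}.
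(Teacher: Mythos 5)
Your proposal is correct and follows essentially the same route as the paper: embed $\A^T$ into $\Alg\Omega_T$ via $\Alg\alpha_T$ and the isomorphism $\Alg H_{\Omega_T}\cong\Alg\Omega_T$, then verify the four closure conditions of \cref{thm:birkhoff_for_rat} using the $\Sigma$-dense/$\Sigma$-closed factorization system for subobjects, \cref{lem:nat_square_pushout} for $U$-retracts, and \cref{lem:equifier_closed_under} for the $T$-algebra laws. The only cosmetic difference is that the paper checks closure separately for the two inclusions $\A^T\subseteq\Alg T$ and $\Alg T\hookrightarrow\Alg H_{\Omega_T}$, whereas you fuse the diagonal-filler and equifier steps into a single pass; the substance is identical.
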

\begin{proof}
    By \cref{lem:componentwize_epi_induce_fullyfaithful,lem:alpha_is_dense}, the functor $\Alg\alpha_T:\Alg T\to\Alg H_{\Omega_T}$ is fully faithful and injective on objects and has a replete image.
    Since $\Alg H_{\Omega_T}\cong\Alg\Omega_T$ by \cref{lem:iso_alg_alg}, $\Alg T$ is isomorphic to a replete full subcategory of $\Alg\Omega_T$.
    The Eilenberg-Moore category $\A^T$ is now isomorphic to a replete full subcategory of $\Alg\Omega_T$ since it is a replete full subcategory of $\Alg T$.
    Note that the inclusion $\A^T\hookrightarrow\Alg\Omega_T$ commutes with forgetful functors, that is, the following commutes:
    \begin{equation*}
        \begin{tikzcd}
            \A^T\arrow[rrd,"U^T"']\arrow[r,phantom,"\subseteq"] &[-10pt] \Alg T\arrow[rd,"\text{forgetful}"sloped]\arrow[r,"\Alg\alpha_T",hook] &[-10pt] \Alg H_{\Omega_T}\arrow[d,"\text{forgetful}"sloped]\arrow[r,phantom,"\cong"] &[-10pt] \Alg\Omega_T\arrow[ld,"U"] \\[25pt]
            & & \A &
        \end{tikzcd}
    \end{equation*}
    To conclude that there exists a set $E$ of $\bS$-relative judgments such that $\A^T\cong\Alg(\Omega_T,E)$, we use the Birkhoff's theorem for relative algebraic theories (\cref{thm:birkhoff_for_rat}).
    That is, it suffices to show that both $\A^T\subseteq\Alg T$ and $\Alg\alpha_T:\Alg T\hookrightarrow\Alg H_{\Omega_T}$ are closed under products, $\Sigma$-closed subobjects, $U$-retracts and filtered colimits.

    Since $\A^T$, $\Alg T$ and $\Alg H_{\Omega_T}$ are finitary monadic over $\A$ by \cref{lem:finitary_endofunctor_monadic_adjoint}, their forgetful functors create products and filtered colimits.
    Therefore, both $\A^T\subseteq\Alg T$ and $\Alg\alpha_T:\Alg T\hookrightarrow\Alg H_{\Omega_T}$ are closed under products and filtered colimits.

    To show that $\Alg\alpha_T:\Alg T\hookrightarrow\Alg H_{\Omega_T}$ is closed under $\Sigma$-closed subobjects, let $(Y,y)$ be an object in $\Alg T$ and let $m:(X,x)\to (Y,y\circ\alpha_{T,Y})$ be a morphism in $\Alg H_{\Omega_T}$ such that $m:X\to Y$ is a $\Sigma$-closed monomorphism in $\A$.
    Then, the following commutes:
    \begin{equation*}
        \begin{tikzcd}
            H_{\Omega_T}X\arrow[dd,"\alpha_{T,X}"']\arrow[rd,"H_{\Omega_T}(m)"description]\arrow[rr,"x"] & & X\arrow[dd,"m"] \\
            & H_{\Omega_T}Y\arrow[d,"\alpha_{T,Y}"] & \\
            TX\arrow[r,"Tm"] & TY\arrow[r,"y"] & Y
        \end{tikzcd}\incat{\A}
    \end{equation*}
    Since $\alpha_{T,X}$ is $\Sigma$-dense by \cref{lem:alpha_is_dense} and $m$ is $\Sigma$-closed, the above rectangle has a unique diagonal filler by \cref{thm:dense_closedmono_factorization}.
    Thus, $(X,x)$ lies in the image of $\Alg\alpha_T$, hence $\Alg\alpha_T:\Alg T\hookrightarrow\Alg H_{\Omega_T}$ is closed under $\Sigma$-closed subobjects.

    To show that $\Alg\alpha_T:\Alg T\hookrightarrow\Alg H_{\Omega_T}$ is closed under $U$-retracts, let $(X,x)$ be an object in $\Alg T$ and let $r:(X,x\circ\alpha_{T,X})\to (Y,y)$ be a morphism in $\Alg H_{\Omega_T}$ such that $r:X\to Y$ is a retraction in $\A$.
    By \cref{lem:nat_square_pushout}, the naturality square of $\alpha_T$ for $r$ forms a pushout.
    Thus, we have the following canonical morphism $y':TY\to Y$:
    \begin{equation*}
        \begin{tikzcd}
            H_{\Omega_T}X\arrow[rd,pos=1.00,phantom,"\ulcorner"]\arrow[d,"\alpha_{T,X}"']\arrow[r,"H_{\Omega_T}(r)"] & H_{\Omega_T}Y\arrow[d,"\alpha_{T,Y}"'description]\arrow[rdd,"y"] & \\
            TX\arrow[d,"x"']\arrow[r,"Tr"'] & TY\arrow[rd,dashed,"y'"'pos=0.3] & \\
            X\arrow[rr,"r"'] & & Y
        \end{tikzcd}\incat{\A}
    \end{equation*}
    Therefore, $(Y,y)$ lies in the image of $\Alg\alpha_T$, hence $\Alg\alpha_T:\Alg T\hookrightarrow\Alg H_{\Omega_T}$ is closed under $U$-retracts.

    It remains to show that $\A^T\subseteq\Alg T$ is closed under $\Sigma$-closed subobjects and $U$-retracts.
    Let $V:\Alg T\to\A$ be the forgetful functor and let $\xi:TV\Rightarrow V$ be the natural transformation defined by $\xi_{(X,x)}:TX\arr{x}X$.
    Let $\eta$ and $\mu$ denote the unit and multiplication of the monad $T$.
    The Eilenberg-Moore category $\A^T$ is now the double equifier of a pair $\id:V\Rightarrow V$ and $\xi\circ\eta V:V\Rightarrow V$ and a pair $\xi\circ T\xi:TTV\Rightarrow V$ and $\xi\circ\mu V:TTV\Rightarrow V$, i.e., $\A^T$ is the full subcategory of $\Alg T$ defined by $\A^T:=\{ (X,x)\in\Alg T \mid \id_{(X,x)}=(\xi\circ\eta V)_{(X,x)}\text{~and~} (\xi\circ T\xi)_{(X,x)}=(\xi\circ\mu V)_{(X,x)} \}$.
    Note that every $\Sigma$-closed monomorphism in $\Alg T$ is transferred to a $\Sigma$-closed monomorphism by $V$ and that every $U$-retraction in $\Alg T$ is transferred to a retraction by both $TTV$ and $V$.
    Thus, \cref{lem:equifier_closed_under} now shows that $\A^T\subseteq\Alg T$ is closed under $\Sigma$-closed subobjects and $U$-retracts.
    This completes the proof.
\end{proof}

\begin{notation}
    Given a category $\C$, let denote by $\Mndf(\C)$ the category of finitary monads on $\C$ and monad morphisms in the sense of \cite[3 Section 6]{barr2005ttt}.
\end{notation}

Combining \cref{thm:from_alg_to_monad,thm:from_monad_to_alg}, we obtain the characterization theorem for our relative algebraic theories:

\begin{theorem}\label{thm:equiv_between_monad_and_rat}
    Let $\bS$ be a partial Horn theory over $\Sigma$.
    Then, there is an equivalence $\Th^\bS\simeq\Mndf(\bS\-\PMod)$ which makes the following commute up to isomorphism:
    \begin{equation*}
        \begin{tikzcd}
            \Th^\bS\arrow[rd,"\Alg"'name=Alg]\arrow[rr,"\simeq"] &[-20pt] &[-20pt] \Mndf(\bS\-\PMod)\arrow[ld,"\EM"] \\
            & (\CAT/\bS\-\PMod)^\op &
            %--------------------------------------------------------------------------
            \arrow[from=Alg,to=1-3,phantom,"\cong"sloped]
        \end{tikzcd}
    \end{equation*}
    Here, $\EM$ is the functor which assigns the Eilenberg-Moore category to each finitary monad.
\end{theorem}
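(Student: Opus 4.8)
The plan is to realize the desired equivalence as an identification of the domains of two fully faithful functors that share a common essential image inside $(\CAT/\bS\-\PMod)^\op$. Concretely, I would compare $\Alg$ with the Eilenberg–Moore functor $\EM\colon\Mndf(\bS\-\PMod)\to(\CAT/\bS\-\PMod)^\op$, $T\mapsto((\bS\-\PMod)^{T},U^{T})$. I will use the abstract fact that if $F\colon\mathcal X\to\mathcal Z$ and $G\colon\mathcal Y\to\mathcal Z$ are fully faithful with the same replete essential image $\mathcal Z_0\subseteq\mathcal Z$, then corestriction gives equivalences $\mathcal X\xrightarrow{\sim}\mathcal Z_0$ and $\mathcal Y\xrightarrow{\sim}\mathcal Z_0$, so choosing a quasi-inverse of the latter yields an equivalence $\Phi\colon\mathcal X\xrightarrow{\sim}\mathcal Y$ with $G\Phi\cong F$. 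Applying this with $F=\Alg$ and $G=\EM$ will produce an equivalence $\Phi\colon\Th^\bS\xrightarrow{\sim}\Mndf(\bS\-\PMod)$ together with the required isomorphism $\EM\circ\Phi\cong\Alg$, provided I verify three hypotheses: that both functors are fully faithful, and that their essential images coincide.

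Full faithfulness of $\Alg$ is exactly \cref{thm:alg_fully_faithful}. For $\EM$ I would invoke the standard correspondence between monad morphisms and functors over the base: for monads $S,T$ on a category $\C$, a monad morphism $\alpha\colon S\to T$ (in the sense of \cite{barr2005ttt}) sends a $T$-algebra $(X,\theta)$ to the $S$-algebra $(X,\theta\circ\alpha_X)$, and this assignment is a bijection onto the class of functors $\C^{T}\to\C^{S}$ commuting with the forgetful functors; passing to the full subcategory of finitary monads preserves full faithfulness. It then remains to match essential images \emph{as full replete subcategories of $(\CAT/\bS\-\PMod)^\op$}, i.e.\ up to isomorphism over $\bS\-\PMod$. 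The inclusion ``image of $\Alg\subseteq$ image of $\EM$'' is \cref{thm:from_alg_to_monad}: each $\Alg(\Omega,E)$ is strictly finitary monadic over $\bS\-\PMod$, hence isomorphic over the base to $((\bS\-\PMod)^{T},U^{T})$ for its induced finitary monad $T$. The reverse inclusion is \cref{thm:from_monad_to_alg}: for every finitary monad $T$ there is an isomorphism $(\bS\-\PMod)^{T}\cong\Alg(\Omega_T,E)$ commuting with forgetful functors, so $((\bS\-\PMod)^{T},U^{T})$ lies in the image of $\Alg$. Since both identifications are over $\bS\-\PMod$, the two essential images coincide.

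Combining these three points with the abstract lemma yields the equivalence and the commuting triangle. I expect the main obstacle to be bookkeeping rather than a deep difficulty: one must be careful that full faithfulness of $\EM$ is stated with the correct variance (a monad morphism $S\to T$ induces a functor $\C^{T}\to\C^{S}$, hence a morphism in $(\CAT/\bS\-\PMod)^\op$), and that every isomorphism used in matching essential images genuinely lives in $\CAT/\bS\-\PMod$ rather than merely in $\CAT$, so that the comparison respects the forgetful functors. Granting the cited monad-morphism bijection, no further computation is needed beyond assembling \cref{thm:alg_fully_faithful,thm:from_alg_to_monad,thm:from_monad_to_alg}.
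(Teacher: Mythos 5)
Your proposal is correct and uses exactly the same ingredients as the paper's proof: full faithfulness of $\Alg$ (\cref{thm:alg_fully_faithful}), full faithfulness of $\EM$ from \cite{barr2005ttt}, and the two comparison theorems \cref{thm:from_alg_to_monad,thm:from_monad_to_alg} to match essential images over the base. The paper merely packages this slightly more concretely, defining $K(\Omega,E)$ as the monad induced by the adjunction $\bF\dashv U$ and lifting morphisms through the full faithfulness of $\EM$, whereas you phrase it as an abstract common-essential-image lemma; the substance is the same.
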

\begin{proof}
    We now define a functor $K:\Th^\bS\to\Mndf(\bS\-\PMod)$.
    For each $\bS$-relative algebraic theory $(\Omega,E)$, let $K(\Omega,E):=\mnd{\Omega}{E}$ be the monad induced by the following adjunction:
    \begin{equation*}
    \begin{tikzcd}[column sep=large, row sep=large]
        \bS\-\PMod\arrow[r,"\bF",shift left=7pt]\arrow[r,"\perp"pos=0.5,phantom] &[10pt]\Alg(\Omega,E)\arrow[l,"U",shift left=7pt]
    \end{tikzcd}
    \end{equation*}
    By \cref{thm:from_alg_to_monad}, we have $\Alg(\Omega,E)\cong\EM(\mnd{\Omega}{E})$.
    Since the functor $\EM$ is fully faithful by \cite[3 Theorem 6.3]{barr2005ttt}, for each morphism $[\rho]:(\Omega,E)\to (\Omega',E')$ in $\Th^\bS$, we have a unique monad morphism $K[\rho]:\mnd{\Omega}{E}\to\mnd{\Omega'}{E'}$ which makes the following commute:
    \begin{equation*}
        \begin{tikzcd}
            \EM(\mnd{\Omega}{E})\arrow[d,phantom,"\cong"sloped] & & \EM(\mnd{\Omega'}{E'})\arrow[ll,"\EM(K\lbrack\rho\rbrack)"']\arrow[d,phantom,"\cong"sloped] \\[-10pt]
            \Alg(\Omega,E)\arrow[rd,"U"'] & & \Alg(\Omega',E')\arrow[ll,"\Alg\rho"']\arrow[ld,"U'"] \\
            & \bS\-\PMod &
        \end{tikzcd}
    \end{equation*}
    This yields a desired functor $K:\Th^\bS\to\Mndf(\bS\-\PMod)$.
    Then, $K$ is fully faithful by \cref{thm:alg_fully_faithful} and essentially surjective by \cref{thm:from_monad_to_alg}, hence $K$ is an equivalence.
\end{proof}

In particular, we get a syntactic presentation of finitary monadic categories over locally finitely presentable categories:

\begin{corollary}
    Let $\bS$ be a partial Horn theory.
    For each category $\C$, the following are equivalent:
    \begin{enumerate}
        \item $\C$ is finitary monadic (resp. strictly finitary monadic) over $\bS\-\PMod$.
        \item $\C$ is equivalent (resp. isomorphic) to $\Alg(\Omega,E)$ for some $\bS$-relative algebraic theory $(\Omega,E)$.
    \end{enumerate}
\end{corollary}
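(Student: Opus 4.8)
The plan is to read this off directly from the two structural theorems already proved, \cref{thm:from_alg_to_monad} and \cref{thm:from_monad_to_alg}, with nothing left but an unwinding of the definition of (strict) finitary monadicity. Write $\A:=\bS\-\PMod$ throughout, and recall that ``$\C$ is finitary monadic over $\A$'' means there is a functor $U:\C\to\A$ admitting a left adjoint whose induced monad $T$ is finitary and whose Eilenberg--Moore comparison functor $\C\to\A^T$ is an equivalence, the strict variant asking this comparison to be an isomorphism over $\A$.

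For the direction from (2) to (1), I would invoke \cref{thm:from_alg_to_monad}, which states exactly that $\Alg(\Omega,E)$ is strictly finitary monadic over $\A$: its forgetful functor induces a finitary monad $\mnd{\Omega}{E}$ and the comparison $\Alg(\Omega,E)\to\A^{\mnd{\Omega}{E}}$ is an isomorphism over $\A$. Hence an isomorphism $\C\cong\Alg(\Omega,E)$ (resp.\ an equivalence $\C\simeq\Alg(\Omega,E)$) commuting with the functors to $\A$ composes with this comparison to exhibit $\C$ as strictly finitary monadic (resp.\ finitary monadic) over $\A$.

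For the direction from (1) to (2), I would start from a finitary monadic $U:\C\to\A$ with induced finitary monad $T$, so that the comparison $\C\to\A^T$ is an equivalence (an isomorphism in the strict case). Applying \cref{thm:from_monad_to_alg} to $T$ furnishes a set $E$ of $\bS$-relative judgments and an isomorphism $\A^T\cong\Alg(\Omega_T,E)$ commuting with the forgetful functors; composing the comparison with this isomorphism yields the required equivalence $\C\simeq\Alg(\Omega_T,E)$ (resp.\ isomorphism $\C\cong\Alg(\Omega_T,E)$), which is statement (2).

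I do not expect any genuine obstacle here, since all the real work has already been discharged: \cref{thm:from_alg_to_monad} supplies one inclusion and \cref{thm:from_monad_to_alg}---ultimately resting on the generalized Birkhoff theorem \cref{thm:birkhoff_for_rat}---supplies the other. The only point demanding care is to keep track of the functors to $\A$ at every step, since both notions (``monadic over $\A$'' and ``equivalent/isomorphic to $\Alg(\Omega,E)$'') are statements about commuting triangles over $\A$ rather than about $\C$ as a bare category; matching the strict case with isomorphisms and the non-strict case with equivalences is then just bookkeeping.
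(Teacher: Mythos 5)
Your proposal is correct and matches the paper's (implicit) argument: the paper states this corollary without proof as an immediate consequence of \cref{thm:from_alg_to_monad} and \cref{thm:from_monad_to_alg}, which is exactly the two-directional unwinding you give. The only bookkeeping point—composing the Eilenberg--Moore comparison with the isomorphism of \cref{thm:from_monad_to_alg}, and matching equivalences with the non-strict case and isomorphisms with the strict case—is handled correctly.
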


\begin{corollary}\label{cor:S-rat_is_independent_of_S}
    The concept of relative algebraic theories is independent of the choice of $\bS$, i.e., 
    if there exists an equivalence $\bS\-\PMod\simeq\bT\-\PMod$ with two partial Horn theories $\bS$ and $\bT$, then the following classes of categories coincide (up to equivalence):
    \begin{enumerate}
        \item Categories of models of $\bS$-relative algebraic theories.
        \item Categories of models of $\bT$-relative algebraic theories.
    \end{enumerate}
\end{corollary}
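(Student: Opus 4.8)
The plan is to reduce the statement to the observation that finitary monadicity over a base category depends only on the equivalence class of that base. By the preceding corollary, a category is, up to equivalence, of the form $\Alg(\Omega,E)$ for some $\bS$-relative algebraic theory $(\Omega,E)$ if and only if it is finitary monadic over $\bS\-\PMod$; the analogous statement holds with $\bS$ replaced by $\bT$. Hence it suffices to prove that, given an equivalence $G\colon\bS\-\PMod\xrightarrow{\simeq}\bT\-\PMod$, a category $\C$ is finitary monadic over $\bS\-\PMod$ if and only if it is finitary monadic over $\bT\-\PMod$.

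For one direction, suppose $U\colon\C\to\bS\-\PMod$ is monadic with finitary induced monad $T$, and let $G'$ be a quasi-inverse of $G$. First I would show that $GU\colon\C\to\bT\-\PMod$ is again monadic: it has the left adjoint $FG'$ (where $F\dashv U$ and $G'\dashv G$), it is conservative as a composite of conservative functors, and Beck's theorem applies because any $GU$-split pair is, via the equivalence $G$ reflecting split coequalizers, a $U$-split pair, whose coequalizer $U$ creates and $G$ preserves. Next I would identify the induced monad of $GU$ on $\bT\-\PMod$ with $GTG'$ up to isomorphism, using $GU\circ FG' = G(UF)G' = GTG'$. Since equivalences preserve all colimits, both $G$ and $G'$ preserve filtered colimits; as $T$ is finitary, so is the composite $GTG'$. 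Thus $GU$ exhibits $\C$ as finitary monadic over $\bT\-\PMod$. The converse follows symmetrically by composing with $G'$.

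Combining these observations yields the desired chain: the categories of models of $\bS$-relative algebraic theories coincide (up to equivalence) with the finitary monadic categories over $\bS\-\PMod$, which, by the equivalence $\bS\-\PMod\simeq\bT\-\PMod$, are exactly the finitary monadic categories over $\bT\-\PMod$, and these are in turn the categories of models of $\bT$-relative algebraic theories.

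The only genuinely technical point is the monadicity-invariance step, i.e.\ checking that composition with an equivalence preserves both monadicity and finitariness. This is routine given Beck's theorem together with the fact that equivalences preserve and reflect filtered colimits, so I do not anticipate a real obstacle; the remainder is a formal assembly of results already established, chiefly the preceding corollary (itself resting on \cref{thm:equiv_between_monad_and_rat} and \cref{thm:from_monad_to_alg}).
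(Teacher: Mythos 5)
Your proposal is correct and follows exactly the route the paper intends: the corollary is stated without proof as an immediate consequence of the preceding corollary, the implicit argument being precisely that finitary monadicity is invariant under replacing the base by an equivalent category. Your filling-in of the Beck's-theorem details and the identification of the transported monad as $GTG'$ is routine and accurate.
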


Thus, given a locally finitely presentable category $\A$, we may call an $\bS$-relative algebraic theory an \emph{$\A$-relative algebraic theory} as long as categories of models are concerned, taking an arbitrary partial Horn theory $\bS$ such that $\A\simeq\bS\-\PMod$.

%-----------------------------------------------------------------------------
\subsection{A characterization of total algebras}
%-----------------------------------------------------------------------------
In the previous subsections, we observed the connection of relative algebraic theories and finitary monads on locally finitely presentable categories.
In this subsection, we will describe the case of \emph{sifted-colimit-preserving} monads.
For more detail of \emph{sifted colimits}, we refer the reader to \cite{adamek2010algebraic}.

\vspace{5em}
\begin{definition}\quad
    \begin{enumerate}
        \item
        A category $\C$ is \emph{sifted} if every colimit indexed by $\C$ commutes with finite products in $\Set$.
        \item
        A \emph{sifted colimit} is a colimit indexed by a small sifted category.
        \item
        A \emph{reflexive pair} is a parallel pair
        $
        \begin{tikzcd}
            \cdot\arrow[r,shift left=1,"u"]\arrow[r,shift right=1,"v"'] & \cdot
        \end{tikzcd}
        $
        with a common section, i.e., a morphism $s$ such that $us=vs=\id$.
        \item
        A \emph{reflexive coequalizer} is a coequalizer of a reflexive pair.
    \end{enumerate}
\end{definition}

\begin{proposition}
    Every reflexive coequalizer is a sifted colimit.
\end{proposition}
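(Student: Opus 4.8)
The plan is to exhibit the shape of a reflexive coequalizer as a small sifted category and then appeal directly to the definition of siftedness. Let $\mathcal{R}$ be the category with two objects $a,b$, generated by two arrows $u,v\colon a\to b$ and a single arrow $s\colon b\to a$, subject only to the relations $u\circ s=v\circ s=\id_b$. A functor $G\colon\mathcal{R}\to\C$ is then exactly a reflexive pair in $\C$ with section $G(s)$, and a routine cocone computation shows that a colimit of $G$ is precisely a coequalizer of $G(u),G(v)$: the extra arrow $s$ imposes no constraint on cocones beyond those coming from $u$ and $v$. Hence every reflexive coequalizer is a colimit over $\mathcal{R}$, and it suffices to prove that $\mathcal{R}$ is sifted, i.e. that $\mathcal{R}$-colimits commute with finite products in $\Set$.

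Since every finite product is built from the empty product and binary products, I would reduce to these two cases. For the empty product one only needs that the colimit of the constant functor at the one-point set is again a point; this holds because $\mathcal{R}$ is nonempty and connected, the arrow $u$ linking $a$ and $b$. The substance is the binary case: given two functors $G_1,G_2\colon\mathcal{R}\to\Set$, presented as reflexive pairs $u_i,v_i\colon A_i\to B_i$ with sections $s_i$, I would analyse the canonical comparison map from the coequalizer of the product pair $(u_1\times u_2,\,v_1\times v_2)$ to the product of the two coequalizers. Writing each coequalizer as $B_i$ modulo the equivalence relation $\sim_i$ generated by $R_i=\{(u_i(x),v_i(x))\mid x\in A_i\}$, the comparison map $[(b_1,b_2)]\mapsto([b_1],[b_2])$ is visibly well defined and surjective, so the whole problem collapses to injectivity.

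The main obstacle, and the only place where reflexivity is genuinely used, is this injectivity. Suppose $b_1\sim_1 b_1'$ and $b_2\sim_2 b_2'$; I must show that $(b_1,b_2)$ and $(b_1',b_2')$ are identified in the coequalizer of the product pair. The idea is to move one coordinate at a time. A single generating step $b_1=u_1(x)\rightsquigarrow v_1(x)=\tilde b_1$ in the first coordinate can be executed on $(b_1,b_2)$ without disturbing the second coordinate by applying the product pair to the element $(x,s_2(b_2))\in A_1\times A_2$: since $u_2 s_2=v_2 s_2=\id$, this element is sent to $(b_1,b_2)$ and to $(\tilde b_1,b_2)$ respectively. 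Iterating along a zigzag witnessing $b_1\sim_1 b_1'$ yields $(b_1,b_2)\sim(b_1',b_2)$, and symmetrically, padding with $s_1$, one gets $(b_1',b_2)\sim(b_1',b_2')$; composing the two gives the required identification. This padding argument is exactly what fails for a non-reflexive parallel pair, and it is what makes $\mathcal{R}$ sifted. Once binary and empty products are handled, commutation with all finite products follows by induction, completing the proof.
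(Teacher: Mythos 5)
Your proof is correct and complete. The paper itself states this proposition without proof, treating it as a standard fact from the literature on sifted colimits (cf.\ \cite{adamek2010algebraic}), so there is no in-paper argument to compare against; what you have written is a self-contained verification of exactly the kind that reference carries out. All the individual steps check out: the category $\mathcal{R}$ you describe has hom-sets $\mathcal{R}(a,b)=\{u,v\}$, $\mathcal{R}(b,a)=\{s\}$, $\mathcal{R}(b,b)=\{\id_b\}$ and $\mathcal{R}(a,a)=\{\id_a,su,sv\}$ once the relations $us=vs=\id_b$ are imposed, so functors out of it are precisely reflexive pairs; the cocone condition $c_a\circ G(s)=c_b$ is indeed implied by $c_b\circ G(u)=c_a$ together with $G(u)G(s)=\id$, so $\mathcal{R}$-colimits are reflexive coequalizers; the nullary case follows from connectedness of $\mathcal{R}$; and the injectivity argument, padding the stationary coordinate with $s_2(b_2)$ (resp.\ $s_1(b_1')$) so that a generating step in one factor can be realized by a generating step of the product pair, is exactly the point where reflexivity is used and is the heart of the matter. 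One remark for perspective: an alternative, slightly slicker route is to invoke the characterization that a small category $\mathcal{D}$ is sifted if and only if the diagonal $\mathcal{D}\to\mathcal{D}\times\mathcal{D}$ is a final functor, and to verify finality for $\mathcal{R}$ directly; your hands-on computation in $\Set$ proves the same thing without needing that characterization, at the cost of the explicit zigzag bookkeeping, which you have handled correctly.
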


\begin{definition}
    A partial Horn theory $(S,\Sigma,\bT)$ is called an \emph{equational theory} if
    \begin{itemize}
        \item
        $\Sigma$ contains no relation symbol,
        \item
        every function symbol $f$ in $\Sigma$ is total, i.e., the sequent $\top \seq{\tup{x}} f(\tup{x})\defined$ is a PHL-theorem of $\bT$, and
        \item
        $\bT$ consists of equations, i.e., every sequent in $\bT$ has the expression $\top\seq{\tup{x}}\phi$.
    \end{itemize}
    Given an equational theory $\bT$, we will denote the category $\bT\-\PMod$ by $\bT\-\Mod$.
    A category equivalent to $\bT\-\Mod$ for some equational theory $\bT$ is sometimes called a \emph{variety}.
\end{definition}

The class of varieties is known to have various characterizations:
\begin{theorem}\label{thm:varieties}
    The following classes of categories coincide:
    \begin{enumerate}
        \item
        Categories equivalent to the Eilenberg-Moore category of a finitary monad on $\Set^S$ for some set $S$.
        \item
        Categories equivalent to the Eilenberg-Moore category of a sifted-colimit-preserving monad on $\Set^S$ for some set $S$.
        \item
        Varieties, i.e., categories of models of equational theories.
        \item
        Categories of models of finite product theories, which are sometimes called (multi-sorted) Lawvere theories.
        \item
        Locally $\mathbf{FINPR}$-presentable categories. (in the sense of $\bD:=\mathbf{FINPR}$ in \cite{adamek2002classification})
    \end{enumerate}
\end{theorem}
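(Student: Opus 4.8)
The plan is to prove the theorem as three groups of equivalences rather than a single long cycle: the universal-algebra equivalences among (iii), (iv), (v) I would cite from the classical literature, the equivalence (i)$\Leftrightarrow$(iii) I would deduce from this paper's correspondence \cref{thm:equiv_between_monad_and_rat} by specializing the base, and the monad-side equivalence (i)$\Leftrightarrow$(ii) I would handle by a totality analysis. Since each of these links the item (iii) (varieties) either directly or through (i), proving the three groups suffices to show all five classes coincide.

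For (i)$\Leftrightarrow$(iii) I would take $\bS_0$ to be the trivial partial Horn theory over the $S$-sorted signature with no function and no relation symbols, so that $\bS_0\-\PMod\simeq\Set^S$. Then \cref{thm:equiv_between_monad_and_rat} gives $\Th^{\bS_0}\simeq\Mndf(\Set^S)$ with $\Alg(\Omega,E)\simeq\EM(T)$ for the corresponding monad $T$, and it remains only to identify categories $\Alg(\Omega,E)$ of models of $\bS_0$-relative algebraic theories with varieties. The key observation is that over the empty signature every term is a variable and every Horn formula-in-context $\vec{x}.\phi$ is a finite conjunction of equations between variables, so $\repn{\vec{x}.\phi}_{\bS_0}$ is a finite discrete object and $\intpn{\ar(\omega)}{A}$ is naturally a finite power of the underlying sets of $A$. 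Hence, after quotienting each arity by the equations it forces, every operator of $\Omega$ becomes a \emph{total} finitary operation, and every $\bS_0$-relative judgment $\phi\seq{\vec{x}}\psi$ (whose hypothesis $\phi$ merely identifies variables) is equivalent to the equation obtained by performing those identifications. Thus $\Alg(\Omega,E)$ is the category of models of an equational theory, and conversely every equational theory is visibly such an $\bS_0$-relative theory; this yields (i)$\Leftrightarrow$(iii). The equivalences (iii)$\Leftrightarrow$(iv)$\Leftrightarrow$(v) are classical: (iii)$\Leftrightarrow$(iv) is the multi-sorted Lawvere correspondence between equational presentations and finite-product theories, and (iv)$\Leftrightarrow$(v) is the identification of models of finite-product theories with the locally $\mathbf{FINPR}$-presentable categories, which I would cite from \cite{adamek2010algebraic,adamek2002classification}.

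For (i)$\Leftrightarrow$(ii), the implication (ii)$\Rightarrow$(i) is immediate since filtered colimits are sifted, so any sifted-colimit-preserving monad is finitary. The reverse (i)$\Rightarrow$(ii) is the heart of the matter and the step where totality is essential. Given a finitary monad $T$ on $\Set^S$, by (i)$\Leftrightarrow$(iii) its Eilenberg--Moore category is a variety $\bT\-\Mod$ whose forgetful and free functors $U,F$ realize the same monad, $T=UF$. As $F$ is a left adjoint it preserves all colimits, so it suffices to show $U$ preserves sifted colimits, equivalently that it preserves filtered colimits and reflexive coequalizers. This is the classical fact that forgetful functors of varieties create reflexive coequalizers, and its proof uses crucially that all operations are total: the coequalizer is computed on underlying sets and the total operations are transported along the surjection while the equations persist. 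Consequently $T$ itself preserves sifted colimits and $\EM(T)$ lies in class (ii).

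I expect the last lemma to be the main obstacle: isolating exactly why totality (guaranteed by the reduction of the second paragraph) forces reflexive coequalizers to be formed underlying-set-wise, whereas a genuinely partial operation could obstruct this. In the language of this paper the reason is transparent — the endofunctor $H_\Omega$ of \cref{def:endofunc_for_signature} attached to an equational theory is assembled from finite powers, which commute with sifted colimits in $\Set^S$ by the very definition of a sifted category, and from coproducts, which preserve all colimits; so I would either argue directly at the level of $H_\Omega$ or invoke the preservation-of-reflexive-coequalizers result from \cite{adamek2010algebraic}.
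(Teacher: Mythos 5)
Your proposal is correct, but it is doing substantially more work than the paper does: the paper's entire proof of this theorem is a citation to \cite{rosicky2012strongly}, \cite{adamek2010algebraic}, and \cite{adamek2002classification}, treating all five equivalences as known results from the literature. Your route instead derives the two monad-theoretic items internally. For (i)$\Leftrightarrow$(iii) you specialize \cref{thm:equiv_between_monad_and_rat} to the empty partial Horn theory $\bS_0$ over a sort set $S$ and observe that, with no function or relation symbols, every arity $\tup{x}.\phi$ only identifies variables, so $\repn{\tup{x}.\phi}_{\bS_0}$ is a finite coproduct of representables, all operators become total finitary operations on a quotiented context, and every $\bS_0$-relative judgment collapses to an unconditional equation; this is a sound and genuinely informative reduction that the paper never spells out (it is, in effect, the sanity check that relative algebraic theories over $\Set^S$ recover classical equational theories). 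For (i)$\Leftrightarrow$(ii) you use the standard fact that the forgetful functor of a variety preserves filtered colimits and reflexive coequalizers, together with the equivalence (valid for functors between cocomplete categories, as in \cite{adamek2010algebraic}) between preserving sifted colimits and preserving those two classes; your alternative observation that $H_\Omega$ from \cref{def:endofunc_for_signature} is built from finite powers and coproducts only shows that $H_\Omega$ itself preserves sifted colimits, so to conclude for the induced monad you should stick with the forgetful-functor argument (or note that the free-monad construction and the orthogonal reflection also respect sifted colimits). What your approach buys is a mostly self-contained proof that exercises the paper's own machinery and makes the role of totality explicit — which is exactly the point of the surrounding \cref{thm:variety_on_variety}; what the paper's approach buys is brevity, since every one of these equivalences is standard. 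Only (iii)$\Leftrightarrow$(iv)$\Leftrightarrow$(v) remains a pure citation in both treatments.
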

\begin{proof}
    See \cite[Proposition 3.3]{rosicky2012strongly}, \cite[14.28 Proposition, A.40 Theorem]{adamek2010algebraic}, and \cite[Theorem 5.5]{adamek2002classification}.
\end{proof}

\begin{theorem}\label{thm:variety_on_variety}
    Let $\bS$ be an equational theory over an $S$-sorted signature $\Sigma$.
    Then for each category $\C$, the following are equivalent:
    \begin{enumerate}
        \item\label{thm:variety_on_variety-1}
        $\C$ is monadic over $\bS\-\Mod$ and its monad preserves sifted colimits.
        \item\label{thm:variety_on_variety-2}
        $\C$ is equivalent to $\Alg(\Omega,E)$ for some $\bS$-relative algebraic theory $(\Omega,E)$ whose operators are total and whose axioms have the expression $\top\seq{\tup{x}}\phi$.
        \item\label{thm:variety_on_variety-3}
        $\C$ is equivalent to $\bT\-\Mod$ for some $S$-sorted equational theory $\bT$ and there exists a theory morphism $\rho:\bS\to\bT$ which is identity on sorts.
    \end{enumerate}
\end{theorem}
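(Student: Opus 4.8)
The plan is to prove the cycle (i)$\Rightarrow$(iii)$\Rightarrow$(ii)$\Rightarrow$(i), leaning on the correspondence $\Th^\bS\simeq\Mndf(\bS\-\PMod)$ of \cref{thm:equiv_between_monad_and_rat} and the list of characterizations of varieties in \cref{thm:varieties}. The organizing observation is that, since $\bS$ is an equational theory, $\bS\-\Mod$ is a variety; hence the forgetful functor $G\colon\bS\-\Mod\to\Set^S$ is monadic, and both $G$ and its induced monad on $\Set^S$ preserve (indeed create) sifted colimits. Note also that preserving sifted colimits implies preserving filtered colimits, so every monad occurring in (i) is automatically finitary and thus lands in $\Mndf(\bS\-\PMod)$.

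For (i)$\Rightarrow$(iii), write $\A:=\bS\-\Mod$ and let $T$ be the sifted-colimit-preserving monad with $\C\simeq\A^T$. I would analyse the composite forgetful functor $V\colon\A^T\xrightarrow{U^T}\A\xrightarrow{G}\Set^S$. Because $U^T$ is monadic it creates those colimits preserved by $T$; as $T$ preserves sifted colimits, $U^T$ preserves them, and hence so does $V=GU^T$. Thus $V$ has a left adjoint, is conservative, and preserves reflexive coequalizers, so the reflexive form of Beck's monadicity theorem shows $V$ is monadic, and its monad $N$ on $\Set^S$ preserves sifted colimits. By \cref{thm:varieties} this produces an equational theory $\bT$ over the $S$-sorted signature with $\C\simeq\bT\-\Mod$. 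Finally, the factorization $V=GU^T$ exhibits $U^T$ as a functor $\A^N\to\A^M$ over $\Set^S$ (where $M$ is the monad of $\bS$), i.e.\ a monad morphism $M\Rightarrow N$; under the correspondence between sifted-colimit-preserving monads on $\Set^S$ and $S$-sorted equational theories this is precisely a theory morphism $\rho\colon\bS\to\bT$ that is the identity on sorts and satisfies $U^\rho\cong U^T$.

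The remaining two implications are comparatively routine. For (iii)$\Rightarrow$(ii) I would read the (necessarily total) function symbols of $\bT$ as a set $\Omega$ of total operators over $\bS$, each with arity $\tup{x}.\top$, and take $E$ to consist of the equations of $\bT$ viewed as $\bS$-relative judgments $\top\seq{\tup{x}}\phi$ together with, for each function symbol $f$ of $\Sigma$, the equation $\top\seq{\tup{x}}f(\tup{x})=f^\rho(\tup{x})$; the latter make the $\bS$-structure of an $(\Omega,E)$-algebra definable from its $\Omega$-operations, giving $\Alg(\Omega,E)\cong\bT\-\Mod$ over $\bS\-\Mod$ with all operators total and all axioms of the form $\top\seq{\tup{x}}\phi$. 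For (ii)$\Rightarrow$(i), the partial Horn theory $\pht{\Omega}{E}$ of \cref{eq:PHT_for_OmegaE} is then itself equational, so $\Alg(\Omega,E)=\pht{\Omega}{E}\-\Mod$ is a variety; monadicity of $U\colon\Alg(\Omega,E)\to\bS\-\Mod$ is already \cref{thm:from_alg_to_monad}, and since $\rho$ is the identity on sorts, $U$ commutes with the two forgetful functors to $\Set^S$, which create sifted colimits, so $U$ preserves sifted colimits and hence so does the induced monad $UF$.

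The crux is (i)$\Rightarrow$(iii): monadicity of a composite of monadic functors fails in general, and what rescues it here is precisely the sifted-colimit hypothesis, which both forces $V$ to preserve reflexive coequalizers (making the crude monadicity theorem applicable) and guarantees that the induced monad on $\Set^S$ is sifted-colimit-preserving, so that \cref{thm:varieties} can be invoked; extracting the identity-on-sorts theory morphism $\rho$ from the monad morphism $M\Rightarrow N$ is the other delicate point. A more hands-on alternative to (i)$\Rightarrow$(ii) would mimic \cref{thm:from_monad_to_alg} using only total operators $\Omega_T^{\mathrm{tot}}:=\coprod_{\tup{x}.\top,\,s}(T\repn{\tup{x}.\top}_\bS)_s$: here sifted-colimit-preservation makes each comparison $\alpha_{T,A}$ surjective already on finitely presentable $A$ (every such $A$ is a reflexive coequalizer of free models on finite contexts, which $T$ preserves), but then obtaining equational rather than merely conditional axioms requires an extra argument, which is why I prefer routing through \cref{thm:varieties}.
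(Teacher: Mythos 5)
Your proposal is correct and takes essentially the same route as the paper: the key implication (i)$\Rightarrow$(iii) is handled identically (crude monadicity of the composite forgetful functor to $\Set^S$, which preserves reflexive coequalizers because the monad preserves sifted colimits, followed by \cref{thm:varieties}), and the remaining implications are the same routine translations between equational theories over $\bS$ and total relative algebraic theories, with sifted-colimit preservation of $U^\rho$ deduced from the two forgetful functors to $\Set^S$. The only differences are organizational (you run a cycle where the paper proves (ii)$\Leftrightarrow$(iii) and (i)$\Leftrightarrow$(iii) separately), and you are in fact slightly more explicit than the paper about extracting the identity-on-sorts theory morphism $\rho$ from the induced monad morphism, a point the paper leaves implicit in its appeal to \cref{thm:varieties}.
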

\begin{proof}
    It is easily seen that \ref{thm:variety_on_variety-2} and \ref{thm:variety_on_variety-3} are equivalent.
    
    {[\ref{thm:variety_on_variety-1}$\implies$\ref{thm:variety_on_variety-3}]}
    Let $T$ be a sifted-colimit-preserving monad on $\bS\-\Mod$ and consider the following adjunctions:
    \begin{equation*}
        \begin{tikzcd}
            \Set^S\arrow[r,"F",shift left=7pt]\arrow[r,"\perp"pos=0.5,phantom]
            &[10pt]\bS\-\Mod\arrow[l,"U",shift left=7pt]\arrow[r,"F^T",shift left=7pt]\arrow[r,"\perp"pos=0.5,phantom]
            &[10pt](\bS\-\Mod)^T\arrow[l,shift left=7pt,"U^T"]
        \end{tikzcd}
    \end{equation*}
    Since $U^T$ creates sifted colimits, $U\circ U^T$ is monadic by Beck's crude monadicity theorem.
    Thus, the proof is completed by \cref{thm:varieties}.
    
    {[\ref{thm:variety_on_variety-3}$\implies$\ref{thm:variety_on_variety-1}]}
    Consider the following adjunctions:
    \begin{equation*}
        \begin{tikzcd}
            \bS\-\Mod & & \bT\-\Mod \\[20pt]
            & \Set^S &
            \arrow[from=1-1,to=1-3,shorten=6,shift left=2,"F^\rho"]
            \arrow[from=1-3,to=1-1,shorten=6,shift left=2,"U^\rho"]
            \arrow[from=2-2,to=1-1,shorten=6,shift left=2,"F_\bS"]
            \arrow[from=1-1,to=2-2,shorten=6,shift left=2,"U_\bS"]
            \arrow[from=2-2,to=1-3,shorten=6,shift left=2,"F_\bT"]
            \arrow[from=1-3,to=2-2,shorten=6,shift left=2,"U_\bT"]
            \arrow[from=1-1,to=1-3,phantom,"\perp"]
            \arrow[from=1-1,to=2-2,phantom,"\top"sloped]
            \arrow[from=2-2,to=1-3,phantom,"\perp"sloped]
        \end{tikzcd}
    \end{equation*}
    Here $U_\bS, U_\bT$ are forgetful functors and $U_\bT=U_\bS\circ U^\rho$ holds.
    By \cref{thm:varieties}, $U_\bS, U_\bT$ are monadic and create absolute coequalizers, hence $U^\rho$ also creates absolute coequalizers.
    Thus Beck's monadicity theorem asserts that $U^\rho$ is also monadic.
    Since both $U_\bS$ and $U_\bT$ create sifted colimits, so does $U^\rho$, which completes the proof.
\end{proof}

The above theorem asserts that sifted-colimit-preserving monads on varieties characterize total algebras whose axioms are Horn formulas.
However, this characterization fails on locally finitely presentable categories in general:
\begin{example}
    Let us define a $\Pos$-relative algebraic theory $\Omega$ with no axiom as $\Omega:=\{\omega\}$ and $\ar(\omega):=(x.\top)$.
    Then an $\Omega$-algebra $\bX$ is a poset $X$ with endomorphism $\intpn{\omega}{\bX}:X\to X$ in $\Set$, which does not necessarily preserve orders.
    We now show that the forgetful functor $U:\Alg\Omega\to\Pos$ does not preserve sifted colimits.
    Consider the $\Omega$-algebras $\bX$ and $\1$ given by
    \begin{equation*}
        \bX:=\left(
        \begin{tikzcd}
            c\arrow[loop left,mapsto] & \\
            b\arrow[u,phantom,"<"sloped]\arrow[r,mapsto] & d\arrow[loop right,mapsto] \\
            a\arrow[u,phantom,"<"sloped]\arrow[loop left,mapsto] &
        \end{tikzcd}
        \right)
        \quad\text{and}\quad
        \1:=\left(
        \begin{tikzcd}
            *\arrow[loop right,mapsto]
        \end{tikzcd}
        \right).
    \end{equation*}
    In the above, the symbol $<$ expresses the partial order of $X$ and arrows express endomorphisms.
    We see at once that the following forms a reflexive coequalizer in $\Alg\Omega$:
    \begin{equation*}
        \begin{tikzcd}[column sep=large,row sep=large]
            \1+\bX\arrow[r,shift left=2,"{(\const{c},\id)}"]\arrow[r,shift right=2,"{(\const{a},\id)}"'] & \bX\arrow[r,"!"] & \1
        \end{tikzcd}\incat{\Alg\Omega}.
    \end{equation*}
    However, the above is not a coequalizer in $\Pos$.
    Indeed, the correct coequalizer in $\Pos$ has two different elements.
\end{example}

%references
%\nocite{*}%未引用文献も表示
\printbibliography
\end{document}